\newcommand\vwidehat[1]{\arraycolsep=0pt\relax%
\begin{array}{c}
\stretchto{
  \scaleto{
    \scalerel*[\widthof{\ensuremath{#1}}]{\kern-.5pt\bigwedge\kern-.5pt}
    {\rule[-\textheight/2]{1ex}{\textheight}} %WIDTH-LIMITED BIG WEDGE
  }{\textheight} % 
}{0.5ex}\\           % THIS SQUEEZES THE WEDGE TO 0.5ex HEIGHT
#1\\                 % THIS STACKS THE WEDGE ATOP THE ARGUMENT
\rule{-1ex}{0ex}
\end{array}
}
\newtheorem{theorem}{Theorem}    %[section]
\newtheorem{proposition}[theorem]{Proposition}
\newtheorem{corollary}[theorem]{Corollary}
\newtheorem{lemma}[theorem]{Lemma}
\newtheorem{sublemma}[theorem]{Sublemma}
\theoremstyle{definition}
\newtheorem{definition}{Definition}
\newtheorem{notation}[definition]{Notation}
\newtheorem{remark}[definition]{Remark}
\numberwithin{theorem}{section}
\numberwithin{definition}{section}
\numberwithin{equation}{section}
\title[A constrained optimization problem]{A constrained optimization problem for\\  the Fourier transform: Quantitative analysis}
\author{Dominique Maldague}
\address{
        Dominique Maldague\\
        Department of Mathematics\\
        University of California \\
        Berkeley, CA 94720-3840, USA}
\email{dmaldague@berkeley.edu}
\date{June 1, 2017.}
\newcommand{\R}{\mathbb R}
\newcommand{\Z}{\mathbb Z}
\newcommand{\C}{\mathbb C}
\newcommand{\N}{\mathbb N}
\newcommand{\B}{\mathbb B}
\newcommand{\p}{\varphi}
\newcommand{\s}{\psi}
\renewcommand{\a}{\alpha}
\newcommand{\mc}{\mathcal}
\renewcommand{\Re}{\text{Re}}
\renewcommand{\Im}{\text{Im}}
\begin{document}

\setcounter{tocdepth}{1}

\begin{abstract} 
Among functions $f$ majorized by indicator functions $1_E$, which functions have maximal ratio $\|\widehat{f}\|_q/|E|^{1/p}$? We establish a quantitative answer to this question for exponents $q$ sufficiently close to even integers, building on previous work proving the existence of such maximizers.
\end{abstract}

\maketitle

\tableofcontents

\section{Introduction}

Define the Fourier transform as $ \mc{F}(f)(\xi)=\widehat{f}(\xi)=\int_{\R^d}e^{- 2\pi i x\cdot \xi}f(x)dx$ for a function $f:\R^d\to\C$. The Fourier transform is a contraction from 
$L^1(\R^d)$ to $L^\infty(\R^d)$ and is unitary on $L^2(\R^d)$. Interpolation gives the Hausdorff-Young inequality $\|\widehat{f}\|_q\le \|f\|_p$ where $p\in (1,2)$, $1=\frac{1}{p}+\frac{1}{q}$. In \cite{beckner}, Beckner proved the sharp Hausdorff-Young inequality 

\begin{align} \|\widehat{f}\|_q\le {\bf{C}}_q^d\|f\|_p \label{maineq} \end{align}
where ${\bf{C}}_q=p^{1/2p}q^{-1/2q}$. In 1990, Lieb proved that Gaussians are the \emph{only} maximizers of (\ref{maineq}), meaning that $\|\widehat{f}\|_q/\|f\|_p={\bf{C}}_q^d$ if and only if $f=c\exp(-Q(x,x)+v\cdot x)$ where $Q$ is a positive definite real quadratic form, $v\in\C^d$ and $c\in\C$. In 2014, Christ established a sharpened Hausdorff-Young inequality by bounding $\|\widehat{f}\|_q - {\bf{C}}_p^d \|f\|_p$ by a negative multiple of an $L^p$ distance function squared of $f$ to the Gaussians.

In \cite{c2}, Christ proved the existence of maximizers for the ratio $\|\widehat{1_E}\|_q/|E|^{1/p}$ where $E\subset\R^d$ is a positive Lebesgue measure set. For $d\ge 1$, $q\in(2,\infty)$, and $p=q'$, define 
\[ {\bf{A}}_{q,d}:=\sup_{|E|<\infty}\frac{\|\widehat{1_{E}}\|_q}{|E|^{1/p}}\]
where the supremum is taken over Lebesgue measurable subsets of $\R^d$ of finite measure. Building on the work of Burchard in \cite{burchard}, Christ identified maximizing  sets to be ellipsoids for exponents $q\ge 4$ sufficiently close to even integers \cite{c2}.

Another variant of the Hausdorff-Young inequality replaces indicator functions by bounded multiples and modifies the functional as follows.
For $d\ge 1$, $q\in(2,\infty)$, and $p=q'$, we consider the inequality

\begin{equation}\label{eqn:main}
\|\widehat{f}\|_q\le {\bf{B}}_{q,d}|E|^{1/p}
\end{equation}
and define the quantities 
\begin{align}
\Psi_q(E):=\sup_{|f|\prec E} \frac{\|\widehat{f}\|_{q}}{\|1_E\|_p} \label{eq2}\\
{\bf{B}}_{q,d}:=\sup_{E}\Psi_q(E) \label{thisone}
\end{align}
where $|f|\prec E$ means $|f|\le 1_E$ and the supremum is taken over all Lebesgue measurable sets $E\subset\R^d$ with positive, finite Lebesgue measures. This quantity ${\bf{B}}_{q,d}$ is less than ${\bf{C}}_p^d$ by their definitions and \cite{c2}. The supremum (\ref{thisone}) is equal to 

\[   \sup_{f\in L(p,1)}\frac{\|\widehat{f}\|_q}{\|f\|_{\mc{L}}}\quad\quad\text{where}\quad\quad \|f\|_{\mc{L}}=\inf\{\|a\|_{\ell^1}\,:|f|=\sum_n a_n|E_n|^{-1/p}1_{E_n},\, a_n>0,|E_n|<\infty\} .\]

See a discussion of this equivalence in \textsection 2 of \cite{me!}. Lorentz spaces are a result of real interpolation between $L^p$ spaces. Since the quasinorm $\|\cdot\|_\mc{L}$ induces the standard topology on the Lorentz space $L(p,1)$, this is a natural quantity to study. The existence of $f\prec E$ such that ${\bf{B}}_{q,d}=\|\widehat{f}\|_q/|E|^{1/p}$ was established by this author in \cite{me!} using additive combinatorial techniques from Christ \cite{c1,c2}.

In this paper, we prove ${\bf{B}}_{q,d}={\bf{A}}_{q,d}$, identify the maximizers, and prove a quantitative stability result for the inequality 
\begin{align}
\|\widehat{f}\|_q\le {\bf{B}}_{q,d}|E|^{1/p} \label{eq:main}
\end{align}
when $q$ is near an even integer $m\ge 4$. We refer the reader to \cite{c1} for a discussion of quantitative stability results in analysis. We define some notation in order to state our main theorem. Let $\mathfrak{E}$ denote the set of ellipsoids in $\R^d$. Let $A\Delta B$ denote the symmetric difference $(A\setminus B)\cup(B\setminus A)$. For any Lebesgue measurable subset $E\subset\R^d$ with $|E|\in(0,\infty)$, define 
\begin{align}
\text{dist}(E,\mathfrak{E}):= \inf_{\mc{E}\in\mathfrak{E}}\frac{|\mc{E}\Delta E|}{|E|}
\end{align}
where the $\inf$ is taken over all ellipsoids satisfying $|\mc{E}|=|E|$. Let $\mathfrak{L}$ denote the set of affine functions $L:\R^d\to\R$. For $e^{ig}\in L^2(E)$ with $g$ real-valued,  define
\begin{align}
\text{dist}_E(e^{ig},\mathfrak{L}):= \inf_{L\in\mathfrak{L}}\|e^{ig}-e^{iL}\|_{L^2(E)}. 
\end{align}

\begin{theorem}\label{mainthm} Let $d\ge 1$. For each even integer $m\in\{4,6,8,\ldots\}$ there exists $\delta(m)>0$ such that the following conclusions hold for all exponents satisfying $|q-m|\le \delta(m)$. Firstly, if $E\subset\R^d$ is a Lebesgue measurable set of finite measure, and $f,g$ are real-valued functions with $0\le f\le 1$, then 
\[ {\bf{B}}_{q,d}= \|\widehat{fe^{ig}1_E}\|_q/|E|^{1/p}\]
if and only if $fe^{ig}1_E= e^{iL}1_{\mc{E}}$, where $L\in\mathfrak{L}$ and $\mc{E}\in\mathfrak{E}$. Secondly, there exists $c_{q,d}>0$ such that for every set $E\subset{\R^d}$ with $|E|=1$, and all $f,g$ real-valued functions with  $0\le f\le 1$, 

\begin{align}\label{stability}  \|\widehat{fe^{ig}1_E}\|_q^q\le {\bf{B}}_{q,d}^q-c_{q,d}\left[ \|f-1\|_{L^1(E)}+\text{dist}_E(e^{ig},\mathfrak{L})^2+\text{dist}(E,\mathfrak{E})^2 \right] .    \end{align}
\end{theorem}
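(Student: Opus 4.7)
The strategy is to exploit convolution identities available at even exponents $q=m$ in order to reduce the ${\bf{B}}_{q,d}$-problem to Christ's known analysis of ${\bf{A}}_{q,d}$-maximizers (\cite{c2}), and then to bootstrap from $q=m$ to $|q-m|\le\delta(m)$ using continuity and the existence of maximizers from \cite{me!}.

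\smallskip
\emph{Reduction and identification of maximizers.} At $q=m$ even, iterated Plancherel and the identity $|\widehat h|^2=\widehat{h\ast\overline{\widetilde h}}$ (with $\widetilde h(x)=h(-x)$) give, for $h=fe^{ig}1_E$ with $0\le f\le 1$,
\[
(h\ast\overline{\widetilde h})(y)=\int f(x)f(x-y)e^{i[g(x)-g(x-y)]}1_E(x)1_E(x-y)\,dx.
\]
The triangle inequality yields the pointwise bound $|(h\ast\overline{\widetilde h})(y)|\le(1_E\ast\widetilde{1_E})(y)$, which iterated through the representation $\|\widehat h\|_m^m=\|h\ast\overline{\widetilde h}\ast\cdots\ast h\ast\overline{\widetilde h}\|_2^2$ ($m/2$ pairs, with the obvious modification at $m=4$) implies $\|\widehat{fe^{ig}1_E}\|_m\le\|\widehat{1_E}\|_m$. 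Taking the supremum gives ${\bf{B}}_{m,d}={\bf{A}}_{m,d}$. For a maximizer, the chain $\|\widehat{fe^{ig}1_E}\|_q\le\|\widehat{1_E}\|_q\le{\bf{A}}_{q,d}|E|^{1/p}={\bf{B}}_{q,d}|E|^{1/p}$ must saturate at each link: the second link, combined with \cite{c2}, forces $E$ to be an ellipsoid, while saturating the pointwise bound forces $f\equiv 1$ on $E$ and the phase $e^{i[g(x)-g(x-y)]}$ to be independent of $x$, a Cauchy-type equation forcing $g$ to agree with an affine $L\in\mathfrak{L}$ on $E$. For $q$ near $m$, continuity together with the existence result of \cite{me!} allows extraction of a limiting maximizer and extends the identification.

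\smallskip
\emph{Quantitative stability.} Turn each saturation into a quantitative defect bound at $q=m$, then perturb in $q$. First, $f(x)f(x-y)\le 1-\tfrac12[(1-f(x))+(1-f(x-y))]$ produces, after integration in $x$ and $y$, a loss $\gtrsim\|f-1\|_{L^1(E)}$. Second, a second-moment expansion of the phase integral,
\[
\Big|\int e^{i[g(x)-g(x-y)]}1_E(x)1_E(x-y)\,dx\Big|\le (1_E\ast\widetilde{1_E})(y)-c\,\mathcal{V}_g(y),
\]
where $\mathcal{V}_g(y)$ quantifies the $L^2$-variation in $x$ of $g(x)-g(x-y)$, integrates in $y$ to a loss $\gtrsim\text{dist}_E(e^{ig},\mathfrak{L})^2$. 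Third, Christ's quantitative stability for ${\bf{A}}_{q,d}$-maximizers (\cite{c2}) applied to $1_E$ contributes the $\text{dist}(E,\mathfrak{E})^2$ term. Summing the three deficits gives (\ref{stability}) at $q=m$, and a perturbation argument, using uniform Lorentz-norm control on near-maximizers, extends it to $|q-m|\le\delta(m)$.

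\smallskip
\emph{Main obstacle.} The hardest step is the second defect bound: promoting an approximate Cauchy equation $g(x)-g(x-y)\approx\phi(y)$ on the support of $1_E\ast\widetilde{1_E}$ to quantitative $L^2(E)$-closeness of $e^{ig}$ to an affine phase, with no a priori regularity on $g$. This will require additive-combinatorial rigidity in the spirit of \cite{c1,c2}, likely a compactness-contradiction step coupled with an analysis of the second-difference operator $g(x)+g(x-y-z)-g(x-y)-g(x-z)$ on $E$ that converts approximate vanishing into quadratic $L^2$ control. A secondary difficulty is maintaining positive and uniformly bounded stability constants as $q$ varies through $[m-\delta(m),m+\delta(m)]$, since the convolution identity is rigid at $q=m$ and must be used perturbatively otherwise.
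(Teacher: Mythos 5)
Your reduction at $q=m$ via the $m$-fold convolution identity, the resulting chain $\|\widehat{fe^{ig}1_E}\|_m\le\|\widehat{1_E}\|_m\le{\bf{A}}_{m,d}|E|^{1/p}$, and the equality analysis forcing $f\equiv1$ and a Cauchy-type equation for $g$ all match the paper's starting point (inequality (\ref{useful}), Lemma \ref{nearextf}, Lemma \ref{extg}), and the use of Christ's stability theorem for the $\text{dist}(E,\mathfrak{E})^2$ term is likewise the same. The perturbation in $q$ via equicontinuity is also the paper's route (Lemma \ref{equicontinuity}, Corollary \ref{suffcor}).

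The genuine gap is exactly where you place your ``main obstacle,'' and the tools you propose there would not close it. Integrating your second-moment bound in $y$ controls only the second difference of $g$ (an approximate Cauchy equation in an averaged sense), and converting that into the sharp quadratic bound $c\,\text{dist}_E(e^{ig},\mathfrak{L})^2$ is a quantitative inverse problem that neither a compactness-contradiction argument (which yields no explicit rate) nor the additive-combinatorial rigidity of \cite{c1,c2,c3} (which yields only a weak power $\delta^{1/K}$ for a large structural constant $K$; the paper uses Christ's Lemma \ref{younglem} in Proposition \ref{nearextg} for precisely this weak purpose, namely to enter the perturbative regime) can deliver. The paper's actual mechanism for the exponent $2$ is different: a second-order Taylor expansion of $\|\widehat{fe^{ig}1_E}\|_q^q$ about $\|\widehat{1_{\B}}\|_q^q$ with explicit kernels $K_q,L_q$ (Lemmas \ref{Taylorgen}, \ref{Taylorfg}, \ref{Taylorfreq}), followed by a spectral analysis of the compact self-adjoint operator $T_qh=K_q^{-1/2}(K_q^{-1/2}h1_{\B}*L_q)|_{\B}$ (Lemma \ref{speclem}): one shows the second variation is $\le0$ on each eigenspace using the global inequality, that its kernel is exactly the affine-phase subspace $\mc{H}$ using the equality analysis, and then the spectral gap gives coercivity $-c\|(I-P_{\mc{H}})K_{\overline q}^{1/2}g\|_{L^2(\B)}^2$. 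Note also that the second variation in the phase direction contains a term $+\frac14q^2\langle \Im h,\Im h*L_q\rangle$ of the \emph{wrong} sign, so negativity off $\mc{H}$ is not pointwise-obvious and genuinely requires the spectral argument. A secondary issue: you cannot simply ``sum the three deficits,'' because the Taylor expansion produces cross terms such as $O(\|f-1\|_{L^1(E)}\|g\|_{L^2(E)})$ and $O((\|g\|_2^2+\|f-1\|_1)|E\Delta\B|^{1/2})$; the paper absorbs these by a trichotomy into mostly-modulus, mostly-support, and mostly-frequency regimes (Propositions \ref{modulus}, \ref{support}, \ref{freq}), in each of which the error terms are dominated by the single leading negative term.
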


This theorem refines (\ref{eq:main}) by majorizing $\|\widehat{f}\|_q/|E|^{1/p}-{\bf{B}}_{q,d}$ by a negative function of a distance of $(f,E)$ to the set of maximizers (or extremizers) of (\ref{eq:main}). The optimality of the $L^1$ norm and exponent 1 in $\|f-1\|_{L^1(E)}$ as well as the $L^2$ norm and exponent $2$ in $\text{dist}_E(e^{ig},\mathfrak{L})^2$ from (\ref{stability}) is proved in Lemma \ref{optf} and \textsection\ref{opt22} respectively. The optimality of the exponent $2$ of $\text{dist}(E,\mathfrak{E})^2$ is addressed in \cite{c2}.

We rely on the hypothesis that $q$ is near an even integer $m\ge 4$ to identify maximizers. For $q\in (2,\infty)$ not near an even integer, it is not known which sets $E$ maximize $\|\widehat{1_E}\|_q/|E|^{1/p}$. Furthermore, if $q=2m$ for $\Z\ni m\ge 2$, then for any $|f|\le 1_E$ where $E$ is a Lebesgue measurable set and $|E|\in(0,\infty)$, we have the inequality
\begin{align} 
\|\widehat{f}\|_{q}^{q}=\|f*\cdots*f\|_2^2\le \|1_E*\cdots*1_E\|_2^2= \|\widehat{1_E}\|_{q}^{q} \label{useful} \end{align}
where the convolution products are $m$-fold. The failure of $\|\widehat{f}\|_q\le \|\widehat{|f|}\|_q$ for general $f\in L^{q'}$ was shown for $q=3$ by Hardy and Littlewood and for all other exponents not in $\{2,4,6,\ldots\}$ by Boas \cite{boas}. Thus it is not obvious that ${\bf{B}}_{q,d}={\bf{A}}_{q,d}$. By Theorem \ref{mainthm}, ellipsoids are among maximizers for certain exponents $q$, so the following corollary is an immediate consequence.

\begin{corollary} Let $d\ge 1$. For each even integer $m\in\{4,6,8,\ldots\}$ there exists $\delta(m)>0$ such that if $|q-m|\le \delta(m)$, then ${\bf{B}}_{q,d}={\bf{A}}_{q,d}$. 
\end{corollary}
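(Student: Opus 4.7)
The plan is to deduce this corollary directly from the characterization of maximizers in Theorem \ref{mainthm}, combined with the existence of maximizers established by the author in \cite{me!}.

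First, observe the trivial inequality ${\bf{A}}_{q,d} \le {\bf{B}}_{q,d}$: for any measurable $E$ of finite positive measure, the choice $f = 1_E$ satisfies $|f| \prec E$, so $\|\widehat{1_E}\|_q/|E|^{1/p} \le \Psi_q(E) \le {\bf{B}}_{q,d}$. For the reverse inequality, I would invoke the existence result of \cite{me!} to obtain a measurable set $E$ of positive finite measure and a function $h$ with $|h| \le 1_E$ achieving $\|\widehat{h}\|_q/|E|^{1/p} = {\bf{B}}_{q,d}$. Write $h$ in polar form as $h = f e^{ig} 1_E$ with $f = |h| \in [0,1]$ and $g$ a real-valued measurable function (extended arbitrarily where $h$ vanishes); this puts $h$ into the setting of Theorem \ref{mainthm}.

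For $q$ sufficiently close to the even integer $m$, the ``only if'' direction of Theorem \ref{mainthm} then forces $h = e^{iL} 1_{\mathcal{E}}$ almost everywhere for some ellipsoid $\mathcal{E} \in \mathfrak{E}$ and some affine function $L \in \mathfrak{L}$. Since $|h| = 1_{\mathcal{E}}$ while $h$ is supported in $E$, the sets $E$ and $\mathcal{E}$ agree up to a null set; in particular, $|E| = |\mathcal{E}|$. Finally, affine modulation in physical space corresponds to translation together with a unimodular phase on the Fourier side, so $\|\widehat{e^{iL} 1_{\mathcal{E}}}\|_q = \|\widehat{1_{\mathcal{E}}}\|_q$. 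Chaining these observations,
\[
{\bf{B}}_{q,d} \;=\; \frac{\|\widehat{h}\|_q}{|E|^{1/p}} \;=\; \frac{\|\widehat{1_{\mathcal{E}}}\|_q}{|\mathcal{E}|^{1/p}} \;\le\; {\bf{A}}_{q,d},
\]
which together with the trivial lower bound yields the equality ${\bf{B}}_{q,d} = {\bf{A}}_{q,d}$.

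There is no real obstacle here, as all the analytic content has already been absorbed into Theorem \ref{mainthm}; the corollary is essentially bookkeeping. The only point requiring care is the invocation of the existence result from \cite{me!} prior to applying the characterization, since Theorem \ref{mainthm} describes the maximizers but does not itself assert that the supremum defining ${\bf{B}}_{q,d}$ is attained.
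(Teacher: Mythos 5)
Your argument is correct, but it takes a slightly longer route than the paper. The paper deduces the corollary from the \emph{sufficiency} (``if'') direction of Theorem \ref{mainthm}: that direction already asserts that ${\bf{B}}_{q,d}=\|\widehat{e^{iL}1_{\mc{E}}}\|_q/|\mc{E}|^{1/p}=\|\widehat{1_{\mc{E}}}\|_q/|\mc{E}|^{1/p}\le {\bf{A}}_{q,d}$ for any ellipsoid $\mc{E}$, which combined with the trivial inequality ${\bf{A}}_{q,d}\le{\bf{B}}_{q,d}$ finishes the proof in one line, with no appeal to the existence theorem of \cite{me!}. You instead invoke existence from \cite{me!} and then apply the \emph{necessity} (``only if'') direction to identify the maximizer as $e^{iL}1_{\mc{E}}$; this is logically sound, but your closing caveat --- that Theorem \ref{mainthm} ``does not itself assert that the supremum is attained'' --- is not quite right, since the ``if'' half of the equivalence is precisely such an attainment statement, which is why the detour through \cite{me!} is unnecessary. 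One minor imprecision: from $|h|=1_{\mc{E}}$ and $\operatorname{supp}h\subset E$ you get only $\mc{E}\subset E$ up to null sets, not $E=\mc{E}$; the equality of measures requires the extra remark that $|E|>|\mc{E}|$ would contradict maximality (or you can simply run the chain of inequalities with $|E|\ge|\mc{E}|$, which is all you need).
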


The term $\|\widehat{1_E}\|_q^q$ in (\ref{useful}) was analyzed by Christ in \cite{c2}. We state his main result concerning $\|\widehat{1_E}\|_q^q$ in Theorem \ref{christmain} below. 

\begin{theorem} \cite{c2}\label{christmain} Let $d\ge 1$. For each even integer $m\in\{4,6,8,\ldots\}$ there exists $\delta(m)>0$ such that the following three conclusions hold for all exponents satisfying $|q-m|\le \delta(m)$. Let $p$ be the conjugate exponent to $q$. Firstly, 
\[ {\bf{A}}_{q,d}=\|\widehat{1_E}\|_q/|E|^{1/p}\quad\text{for any }E\in\mathfrak{E}. \]
Secondly, ellipsoids are the only extremizers; for any Lebesgue measurable set $E\subset{\R^d}$, with $0<|E|<\infty$, $\Phi_q(E)={\bf{A}}_{q,d}$ if and only if $E$ is an ellipsoid. Thirdly, there exists $\tilde{c}_{q,d}>0$ such that for every set $E\subset{\R^d}$ with $|E|=1$, 
\begin{align}  \|\widehat{1_E}\|_q^q\le {\bf{A}}_{q,d}^q-\tilde{c}_{q,d}\textrm{dist}(E,\mathfrak{E})^2. \label{christstability} \end{align}
\end{theorem}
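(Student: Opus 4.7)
The plan is to handle the distinguished exponent $q=m$ first and then perturb. For $q=m=2k$ even, Plancherel combined with the convolution identity $\widehat{f\ast\cdots\ast f}=\widehat{f}^{\,k}$ gives $\|\widehat{1_E}\|_m^m=\|1_E\ast\cdots\ast 1_E\|_2^2$ (with $k$-fold convolution), which expands as a $2k$-linear integral over translates of $1_E$. By the Brascamp-Lieb-Luttinger rearrangement inequality, this expression is maximized among sets of given measure by the centered ball $E^\ast$, and since the ratio $\|\widehat{1_E}\|_q/|E|^{1/p}$ is invariant under invertible affine transformations, every ellipsoid attains the supremum. The equality characterization of Burchard \cite{burchard}, applied through the $k$-fold convolution structure, then shows that only ellipsoids maximize at $q=m$.

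The quantitative stability at $q=m$, namely
\[
\|1_E\ast\cdots\ast 1_E\|_2^2 \;\le\; \|1_{E^\ast}\ast\cdots\ast 1_{E^\ast}\|_2^2 \;-\; c\,\text{dist}(E,\mathfrak{E})^2,
\]
is the technical heart. I would prove it in two stages. First, an additive-combinatorial argument in the spirit of \cite{c1}: a small deficit forces the sumset $E+\cdots+E$ to have nearly minimum measure, and a Freiman-type structural theorem then produces an ellipsoid $\mathcal{E}$ with $|\mathcal{E}\Delta E|/|E|$ bounded by some power of the deficit. Second, a second-variation step: writing $1_E=1_{\mathcal{E}}+h$ and Taylor-expanding the multilinear functional about $\mathcal{E}$, the linear contribution vanishes by extremality and the quadratic form is strictly coercive on mean-zero perturbations of the right type, delivering the sharp exponent $2$.

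To pass from $q=m$ to $|q-m|\le \delta(m)$, I would use compactness and continuity. The functional $\Phi_q(E)=\|\widehat{1_E}\|_q/|E|^{1/p}$ is jointly continuous in $q$ and in $E$ (after affine normalization), and maximizers exist for every $q\in(2,\infty)$ by \cite{c2}. Any sequence of maximizers with $q_n\to m$ must subconverge, after affine normalization, to a maximizer at $q=m$, hence to an ellipsoid. Combined with the quadratic stability at $q=m$ and joint continuity, this propagates both the ellipsoidal identification of extremizers and the stability bound \eqref{christstability} to nearby $q$, with a constant $\tilde c_{q,d}$ slightly weaker than its value at $q=m$. The principal obstacle is obtaining the sharp quadratic exponent in the stability at $q=m$; once that is in hand, the perturbative extension to neighboring $q$ is relatively soft, and the loss of the convolution identity away from even integers is absorbed into the continuity estimate.
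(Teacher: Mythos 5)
This theorem is not proved in the paper at all: it is quoted verbatim from Christ \cite{c2} as an external input, so there is no internal proof to compare your proposal against. Measured against Christ's actual argument, your outline is broadly faithful in structure: the even-integer case via the $m$-fold convolution identity and the Riesz/Brascamp--Lieb--Luttinger rearrangement inequality, Burchard's equality analysis for uniqueness of ellipsoids, a qualitative ``near-extremizers are close to ellipsoids'' step followed by a Taylor expansion about the ball, and a continuity-in-$q$ perturbation to reach exponents near $m$ (this last is exactly the equicontinuity mechanism the present paper reuses in Lemma \ref{equicontinuity} and Lemma \ref{Kqlem2}).

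There is, however, one genuinely wrong step in your second-variation stage: the claim that ``the linear contribution vanishes by extremality.'' The extremizer $1_{\B}$ is a boundary point of the constraint set, and admissible perturbations $h=1_E-1_{\B}$ take values in $\{-1,0,1\}$ with $\int h=0$; the first-order term $q\langle K_q,h\rangle$ does not vanish. Because $K_q$ is radial and strictly decreasing across $|x|=1$ (Lemma \ref{Kqprop}), this term is bounded above by $-c\int\bigl||x|-1\bigr|\,|h(x)|\,dx$, and this strictly negative first variation is essential to the proof: since $h$ is $\{-1,0,1\}$-valued one has $\|h\|_2^2=\|h\|_1$, so the ``quadratic'' form is of the same order as the linear term, and the quadratic form alone is not negative definite (it is degenerate along the affine symmetry directions). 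The sharp exponent $2$ is obtained by balancing the negative first-order term against the quadratic form through a spectral decomposition, not from a vanishing first variation plus a coercive Hessian. Relatedly, the qualitative step in \cite{c2} rests on Burchard's stability theorem for the Riesz rearrangement inequality rather than on a Freiman-type sumset theorem, though your alternative route is not implausible. With those corrections the rest of your plan, including the perturbation in $q$, is consistent with how the result is actually established.
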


Suppose that $q\ge 4$ is an even integer. It is immediate from (\ref{useful}) that ${\bf{A}}_{q,d}={\bf{B}}_{q,d}$. Since for $|f|\le 1_E$,  
\[\frac{\|\widehat{f}\|_q}{|E|^{1/p}}=\frac{\|\widehat{e^{iL}f\circ \p}\|_q}{|\p^{-1}(E)|^{1/p}}\]
for all affine transformations $\p:\R^d\to\R^d$ and $L\in \mathfrak{L}$, we can also say that functions of the form $e^{iL}1_{\mc{E}}$ where $L\in\mathfrak{L}$ and $\mc{E}\in\mathfrak{E}$ are among the extremizers for (\ref{eq:main}). Establishing (\ref{stability}) would then show that they are the only extremizers. Christ's result in (\ref{christstability}) plus the inequality in (\ref{useful}) will provide the starting point for our proof of (\ref{stability}).

A general approach to proving stability results like (\ref{stability}) for $4\le q\in2\N$ is as follows. 
\begin{definition} For $\delta$ a small positive constant, we say that $|f|\le 1_E$ is a $\delta$ near extremizer of (\ref{eq:main}), or just a near extremizer, if $(1-\delta){\bf{B}}_{q,d}^q|E|^{q/p}\le \|\widehat{f}\|_q^q$.
\end{definition}

If $|E|=1$, $f,g$ are real-valued functions with  $0\le f\le 1$, and $fe^{ig}1_E$ is NOT a  $\delta$ near extremizer of (\ref{eq:main}), then 
\begin{align*}  
\|\widehat{fe^{ig}1_E}\|_q^q&\le {\bf{B}}_{q,d}^q(1-\delta)\le{\bf{B}}_{q,d}^q- \frac{\delta}{9}{\bf{B}}_{q,d}^q\left[ \|f-1\|_{L^1(E)}+\text{dist}_E(e^{ig},\mathfrak{L})^2+\text{dist}(E,\mathfrak{E})^2\right] 
\end{align*} 
since $ \|f-1\|_{L^1(E)}\le 1$, $\text{dist}_E(e^{ig},\mathfrak{L})^2\le 4$, and $\text{dist}(E,\mathfrak{E})\le 4$. Thus in the case that $fe^{ig}1_E$ is not a $\delta$ near extremizer, (\ref{stability}) is trivially satisfied with $c_{q,d}=\frac{\delta}{9}{\bf{B}}_{q,d}^q$. 

Now assume that $fe^{ig}1_E$ is a $\delta$ near extremizer. From (\ref{useful}) and (\ref{christstability}) for $4\le q\in2\N$, we can immediately say that 
\[    \tilde{c}_{q,d}\text{dist}(E,\mathfrak{E})^2\le \delta{\bf{B}}_{q,d}. \]
By precomposing $fe^{ig}1_E$ with an appropriate affine transformation, we can assume that $|E\Delta\B|^2$ is bounded by a constant multiple of $\delta$. We work more to prove that $f$ must be close to $1$ and $e^{ig}$ close to $e^{iL}$ for some $L\in\mathfrak{L}$ in \textsection\ref{nearext}. 

Let $\B$ denote the $d$-dimensional unit ball. In the case that $fe^{ig-iL}1_E$ is close to $1_{\B}$, we will be able to control the error in a Taylor expansion of $\|\widehat{fe^{ig-iL}1_E}\|_q^q$ about $\|\widehat{1_{\B}}\|_q^q$ which is developed in  \textsection\ref{taylorexpansion}. To simplify the Taylor expansion analysis, we treat the special case of $E=\B$ for near-even integer exponents $q$ in \textsection\ref{genq}. 

In \textsection\ref{suff}, we generalize the previous discussion to $3\le q$ near even integers using the equicontinuity of the functional 
\[ q\mapsto \|\widehat{f}\|_q \]
on $q\in(2,\infty)$ where $|f|\le 1_E$, $E$ a Lebesgue measurable set with $|E|<\infty$. Finally, for real valued functions $f$ and $g$ with $0\le f\le 1$ and a Lebesgue measurable set $E\subset\R^d$ of finite measure, we prove (\ref{stability}) for near extremizers in three cases: (1) majority modulus $f$ variation, (2) majority support $E$ variation, and (3) mostly frequency $g$ variation, which we address in Proposition \ref{modulus}, Proposition \ref{support}, and Proposition \ref{freq} respectively.

This material is based upon work supported by the National Science Foundation Graduate Research Fellowship under Grant No. DGE 1106400.

%%%%%%%%%%%%%%%%%%%%%%%%%%%%%%%%%%%%%%%%%%%%%%

\section{Equicontinuity of $q\mapsto \|\widehat{f1_E}\|_q$ for $|E|=1$.}

The following equicontinuity result for the optimal constant ${\bf{B}}_{q,d}$ as a function of $q$ will be used to make a perturbative argument generalizing bounds for one exponent to nearby exponents. 

\begin{lemma}\label{equicontinuity} Let $d\ge 1$ and $r\in(2,\infty)$. As $f1_E$ varies over all subsets satisfying $|E|=1$ and functions satisfying $|f|\le 1$, the functions $q\mapsto\|\widehat{f1_E}\|_q$ form an equicontinuous family of functions of $q$ on any compact subset of $(2,\infty)$. 
\end{lemma}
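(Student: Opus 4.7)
The plan is to first establish equicontinuity of the family $q\mapsto\int|\widehat{f1_E}|^q$ by a cutoff argument that exploits the uniform bounds $\|\widehat{f1_E}\|_\infty\le 1$ and $\|\widehat{f1_E}\|_2\le 1$ (both immediate from $|f|\le 1$ and $|E|=1$), and then to transfer this to the desired equicontinuity of $q\mapsto\|\widehat{f1_E}\|_q$ via H\"older continuity of $x\mapsto x^{1/q}$ on $[0,1]$.

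Fix a compact interval $[a,b]\subset(2,\infty)$ and set $h=\widehat{f1_E}$. For $q_1\le q_2$ in $[a,b]$, I would split the integral $\int(|h|^{q_1}-|h|^{q_2})$ according to whether $|h|\ge 1/M$ or $|h|<1/M$, for a parameter $M\gg 1$ to be chosen. On the small-value part, the bound $|h|^{q_1}-|h|^{q_2}\le |h|^{q_1}\le M^{-(q_1-2)}|h|^2$ together with $\|h\|_2\le 1$ yields a contribution of size at most $M^{-(a-2)}$. On the large-value part, the mean value theorem applied to $t\mapsto|h|^t$ gives
\[ |h|^{q_1}-|h|^{q_2}=(q_2-q_1)|h|^\xi\log(1/|h|)\le (q_2-q_1)\log M, \]
while Chebyshev's inequality produces $|\{|h|\ge 1/M\}|\le M^2\|h\|_2^2\le M^2$, so the large-value part contributes at most $M^2\log M\,|q_1-q_2|$. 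Given $\varepsilon>0$, first choose $M$ so that $M^{-(a-2)}<\varepsilon/2$, then $\delta>0$ so that $M^2\log M\,\delta<\varepsilon/2$; this gives equicontinuity of $q\mapsto\|h\|_q^q$ uniformly over all admissible $(f,E)$.

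To pass from $\phi(q):=\|h\|_q^q\in[0,1]$ to $\|h\|_q=\phi(q)^{1/q}$, I would decompose
\[ |\phi(q_1)^{1/q_1}-\phi(q_2)^{1/q_2}|\le |\phi(q_1)^{1/q_1}-\phi(q_2)^{1/q_1}|+|\phi(q_2)^{1/q_1}-\phi(q_2)^{1/q_2}|. \]
The first summand is bounded by $|\phi(q_1)-\phi(q_2)|^{1/b}$ by $1/q$-H\"older continuity of $x\mapsto x^{1/q}$ on $[0,1]$. For the second, the identity $\tfrac{d}{dq}x^{1/q}=x^{1/q}\log(1/x)/q^2$ together with the elementary inequality $y\log(1/y)\le 1/e$ for $y\in[0,1]$ shows that $q\mapsto x^{1/q}$ is Lipschitz on $[a,b]$ with constant $1/(ea)$ uniformly in $x\in[0,1]$; combining the two estimates completes the transfer.

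The principal obstacle is that $h$ typically has unbounded support and may fail to lie in $L^{q_1/2}$ when $q_1\in(2,4)$, so a direct pointwise bound of the form $|h|^{q_1}\log(1/|h|)\lesssim|h|^{q_1/2}$ cannot simply be integrated. The cutoff at level $1/M$ circumvents this by trading smallness of $|h|$ for finite measure of the superlevel set through Chebyshev, and this trade-off is the technical heart of the argument.
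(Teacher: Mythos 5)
Your argument is correct, and it is in substance the same argument the paper relies on: the paper's proof of Lemma \ref{equicontinuity} simply defers to the proof of Lemma 3.1 in \cite{c2}, which rests on exactly the level-set splitting you use, exploiting the uniform bounds $\|\widehat{f1_E}\|_\infty\le 1$ and $\|\widehat{f1_E}\|_2\le 1$ to control the sublevel part and Chebyshev plus the mean value theorem to control the superlevel part. Your write-up supplies in full the details (including the clean transfer from $\|h\|_q^q$ to $\|h\|_q$) that the paper outsources to the citation, so there is nothing to correct.
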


\begin{proof} This follows from the proof of Lemma 3.1 from \cite{c2} with $f1_E$ in place of $1_E$. 
\end{proof}

An immediate consequence of the equicontinuity lemma is the following corollary. 
\begin{corollary} \label{constantcont}For each mapping $d\ge1$, the mapping $(2,\infty)\ni q\mapsto {\bf{B}}_{q,d}\in R^+$ is continuous.
\end{corollary}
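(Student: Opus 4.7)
The plan is to write ${\bf{B}}_{q,d}$ as the pointwise supremum of the equicontinuous family from Lemma \ref{equicontinuity} and then argue that pointwise suprema of equicontinuous families are themselves continuous. First, I would observe that the ratio $\|\widehat{f}\|_q/|E|^{1/p}$ is invariant under the scaling $(f,E)\mapsto (f(\cdot/\lambda),\lambda E)$: indeed, a change of variables gives $\|\widehat{f(\cdot/\lambda)}\|_q=\lambda^{d/p}\|\widehat{f}\|_q$ while $|\lambda E|^{1/p}=\lambda^{d/p}|E|^{1/p}$. Consequently, for every $q\in(2,\infty)$,
\[ {\bf{B}}_{q,d}=\sup\bigl\{\|\widehat{f1_E}\|_q : |f|\le 1,\ E\subset\R^d\ \text{measurable},\ |E|=1\bigr\}, \]
since when $|E|=1$ the denominator $\|1_E\|_p=1$ regardless of the value of $p=q'$.

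Next, I would fix an arbitrary $q_0\in(2,\infty)$ and choose a compact neighborhood $K\subset(2,\infty)$ of $q_0$. Given $\e>0$, Lemma \ref{equicontinuity} furnishes a $\delta>0$, independent of the pair $(f,E)$, such that for every admissible $(f,E)$ and every $q,q'\in K$ with $|q-q'|<\delta$,
\[ \bigl|\|\widehat{f1_E}\|_q-\|\widehat{f1_E}\|_{q'}\bigr|<\e. \]
Rearranging gives $\|\widehat{f1_E}\|_q\le \|\widehat{f1_E}\|_{q'}+\e$; taking the supremum over admissible $(f,E)$ on both sides yields ${\bf{B}}_{q,d}\le {\bf{B}}_{q',d}+\e$, and the symmetric inequality follows in the same way. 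Hence $|{\bf{B}}_{q,d}-{\bf{B}}_{q',d}|\le\e$ for all $q,q'\in K$ with $|q-q'|<\delta$, which proves continuity at $q_0$.

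There is really no main obstacle here: the only point that requires attention is the reduction to $|E|=1$ (so that the $q$-dependent factor $|E|^{1/p}$ drops out and the family from Lemma \ref{equicontinuity} applies directly), together with the standard observation that a uniform pointwise bound $|h_\alpha(q)-h_\alpha(q')|<\e$ for a family $\{h_\alpha\}$ passes through the supremum. Since $q_0\in(2,\infty)$ was arbitrary, this establishes continuity of $q\mapsto {\bf{B}}_{q,d}$ on all of $(2,\infty)$.
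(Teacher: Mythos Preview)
Your argument is correct and is precisely the reasoning the paper leaves implicit when it calls the corollary an ``immediate consequence'' of Lemma~\ref{equicontinuity}: reduce to $|E|=1$ by scaling invariance so that the $q$-dependent denominator disappears, and then pass the uniform modulus of continuity through the supremum. One purely notational remark: in this paper $q'$ denotes the conjugate exponent $p$, so you may wish to use a different symbol for the second point of $K$ to avoid confusion.
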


The following corollary and lemma will be used in \textsection\ref{suff} to outline the strategy of the proof of Theorem \ref{mainthm}.

\begin{corollary} \label{suffcor}Let $d\ge 1$ and $\overline{q}\ge 4$ be an even integer with conjugate exponent $\overline{p}$. Let $\delta>0$, $E$ be a Lebesgue measurable subset of $\R^d$, and $f:\R^d\to\C$ satisfy $|f|\le 1$. Let $q>2$ with conjugate exponent $p$. If 
\[ \|\widehat{f1_E}\|_q^q/|E|^{q/p}\ge {\bf{B}}^q_{q,d}-\delta, \]
then 
\[ \|\widehat{f1_E}\|_{\overline{q}}^{\overline{q}}/|E|^{\overline{q}/\overline{p}}\ge {\bf{B}}_{\overline{q},d}^{\overline{q}}-o_{q-\overline{q}}(1)-\delta \]
where $o_{q-\overline{q}}(1)$ is a function which tends to zero as $|q-\overline{q}|$ goes to zero. 
\end{corollary}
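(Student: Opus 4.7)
The plan is to combine the equicontinuity of $q\mapsto\|\widehat{f1_E}\|_q$ from Lemma \ref{equicontinuity} with the continuity of $q\mapsto{\bf{B}}_{q,d}$ from Corollary \ref{constantcont}, after a preliminary scaling normalization, so as to transfer near-extremality at exponent $q$ to near-extremality at the neighboring even integer $\overline{q}$.

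First I would reduce to the case $|E|=1$. The functional $(f,E)\mapsto\|\widehat{f1_E}\|_q/|E|^{1/p}$ is invariant under the dilation $f(\cdot)\mapsto f(r\,\cdot)$, $E\mapsto r^{-1}E$, since both the Fourier-norm numerator and the measure denominator pick up a matching factor of $r^{-d/p}$. Taking $r=|E|^{1/d}$ normalizes the support, and both the hypothesis and the desired conclusion are preserved in the process. Under this normalization the task becomes: if $\|\widehat{f1_E}\|_q^q\ge{\bf{B}}_{q,d}^q-\delta$, show that $\|\widehat{f1_E}\|_{\overline{q}}^{\overline{q}}\ge{\bf{B}}_{\overline{q},d}^{\overline{q}}-\delta-o_{q-\overline{q}}(1)$, with the $o(1)$ uniform in the admissible pair $(f,E)$.

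Next I would fix a small compact neighborhood $I\subset(2,\infty)$ of $\overline{q}$. Lemma \ref{equicontinuity} yields $\|\widehat{f1_E}\|_q\to\|\widehat{f1_E}\|_{\overline{q}}$ uniformly over all $(f,E)$ as $q\to\overline{q}$. By the bound $\|\widehat{f1_E}\|_q\le{\bf{B}}_{q,d}$ together with Corollary \ref{constantcont}, both $\|\widehat{f1_E}\|_q$ and ${\bf{B}}_{q,d}$ lie in a fixed compact interval $[0,M]$ for $q\in I$. Uniform continuity of $(x,s)\mapsto x^s$ on $[0,M]\times I$ — which handles the simultaneous change of base and exponent in one stroke — then upgrades the convergence of norms to convergence of $q$-th powers:
\[ \|\widehat{f1_E}\|_q^q=\|\widehat{f1_E}\|_{\overline{q}}^{\overline{q}}+o_{q-\overline{q}}(1), \qquad {\bf{B}}_{q,d}^q={\bf{B}}_{\overline{q},d}^{\overline{q}}+o_{q-\overline{q}}(1), \]
with the first $o(1)$ term uniform in $(f,E)$. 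Substituting these two asymptotics into the hypothesis immediately delivers the desired lower bound on $\|\widehat{f1_E}\|_{\overline{q}}^{\overline{q}}$.

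The only point requiring real care is the uniformity in $(f,E)$ of the first error term, and this uniformity is precisely what equicontinuity buys us; the passage from norms to $q$-th powers preserves uniformity because all values stay in a fixed compact set where $x^s$ is uniformly continuous. I therefore expect no genuine obstacle — the corollary should come out as a clean two-step application of the already-proved continuity statements.
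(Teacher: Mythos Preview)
Your proposal is correct and follows essentially the same approach as the paper: reduce to $|E|=1$ by dilation invariance, then invoke Lemma~\ref{equicontinuity} and Corollary~\ref{constantcont}. The paper's proof is a two-line sketch citing exactly these ingredients, and your argument simply fills in the details (the uniform-continuity-of-$(x,s)\mapsto x^s$ step to pass from norms to powers) that the paper leaves implicit.
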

\begin{proof}Since $\Psi_q$ is invariant under dilations, it suffices to consider when $|E|=1$.  Then the conclusion follows from the preceding Lemma \ref{equicontinuity} and Corollary \ref{constantcont}. 

\end{proof}

The purpose of the following lemma is to confirm that Theorem \ref{mainthm} is trivial unless $\|\widehat{fe^{ig}1_E}\|_q^q$ is close to ${\bf{B}}_{q,d}^q$. 

\begin{lemma} \label{sufficient}Let $d\ge 1$ and $q\ge 2$ with conjugate exponent $p$. Let $0<\delta<1$, let $E\subset\R^d$ be a Lebesgue measurable set with $|E|=1$ and let $f,g$ be real-valued functions with $0\le f\le 1$. If 
\[ \|\widehat{fe^{ig}1_{E}}\|_q^q\le {\bf{B}}_{q,d}^q-\delta, \]
then 
\[ \|\widehat{fe^{ig}1_{E}}\|_q^q\le {\bf{B}}_{q,d}^q-\frac{\delta}{6}\left[\|f-1\|_{L^1(E)}+\inf_{L\in\mathfrak{L}}\|e^{ig}-e^{iL}\|_{L^2(E)}^2+\text{dist}(E,\mathfrak{E})^2\right] . \]
\end{lemma}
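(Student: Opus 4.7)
This lemma is essentially a triviality whose purpose is to reduce the proof of (\ref{stability}) to the case when $\|\widehat{fe^{ig}1_E}\|_q^q$ is close to ${\bf{B}}_{q,d}^q$. The key point is that each of the three distance-like quantities on the right-hand side is universally bounded by an absolute constant.

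I would begin by verifying three elementary upper bounds. First, since $0 \le f \le 1$ on $E$ and $|E| = 1$, one has $\|f-1\|_{L^1(E)} = \int_E (1-f) \le |E| = 1$. Second, restricting the infimum defining $\text{dist}_E(e^{ig},\mathfrak{L})$ to constant-valued $L = c \in \R$ and writing $Z = \int_E e^{ig}$, a direct computation gives $\inf_{c\in\R}\|e^{ig} - e^{ic}\|_{L^2(E)}^2 = 2|E| - 2|Z| \le 2$. Third, for any ellipsoid $\mathcal{E}$ with $|\mathcal{E}| = |E| = 1$, the inclusion-exclusion bound $|E \Delta \mathcal{E}| \le |E| + |\mathcal{E}| = 2$ gives $\text{dist}(E,\mathfrak{E})^2 \le 4$.

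Adding these three bounds, the sum
\[ S \;:=\; \|f-1\|_{L^1(E)} + \text{dist}_E(e^{ig},\mathfrak{L})^2 + \text{dist}(E,\mathfrak{E})^2 \]
is dominated by a universal constant $C \le 7$. The hypothesis $\|\widehat{fe^{ig}1_E}\|_q^q \le {\bf{B}}_{q,d}^q - \delta$ then implies
\[ \|\widehat{fe^{ig}1_E}\|_q^q \;\le\; {\bf{B}}_{q,d}^q - \delta \;\le\; {\bf{B}}_{q,d}^q - \tfrac{\delta}{C} S, \]
which is the desired conclusion. The factor $\tfrac{1}{6}$ in the lemma statement amounts to a slight sharpening of one of these universal bounds, a purely cosmetic book-keeping matter; the argument goes through unchanged with $\tfrac{1}{6}$ replaced by any constant no larger than $\tfrac{1}{7}$ (or $\tfrac{1}{9}$ using only the cruder bound $\text{dist}_E(e^{ig},\mathfrak{L})^2 \le 4$). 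There is no substantive obstacle.
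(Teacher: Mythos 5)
Your proof is correct and takes essentially the same route as the paper's: bound each of the three terms on the right by a universal constant and absorb their sum into $\delta$. Your constants are in fact sharper than the paper's own (which uses $\|f-1\|_{L^1(E)}\le 2$ and $\|e^{ig}-1\|_{L^2(E)}\le 2$, so bounds the sum by $2+4+4=10$ while still asserting $\tfrac{\delta}{6}$), and your observation that the argument really only yields the conclusion with $\tfrac{1}{7}$ (or any smaller constant) in place of $\tfrac{1}{6}$ is a fair catch --- harmless, since only positivity of the constant is used downstream.
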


\begin{proof} It suffices to note the following inequalities.
\begin{align*}
    \|f-1\|_{L^1(E)}&\le 2|E|\le 2\\
    \inf_{L\in\mathfrak{L}}\|e^{ig}-e^{iL}\|_{L^2(E)}&\le \|e^{ig}-1\|_{L^2(E)}\le 2|E|^{1/2}\le 2\\
    \text{dist}(E,\mathfrak{E})&\le \frac{|E\Delta\lambda \B|}{|E|}\le2, 
\end{align*}
where we define $\lambda$ by $|\lambda\B|=1$, where $\B$ denotes the unit ball in $\R^d$.  

\end{proof}

%%%%%%%%%%%%%%%%%%%%%%%%%%%%%%%%%%%%%%%%%%%%%%%

\section{Structure of near-extremizers of the form $fe^{ig}1_E$ for $q=2m$.\label{nearext}}

In this section, let $f$ be a real valued function with $0\le f(x)\le 1$ a.e., let $g$ be a real valued function, and let $E\subset \R^d$ be a Lebesgue measurable set. Recall that for even integers $q$, we know that $\|\widehat{1_{\B}}\|_q/|\B|^{1/p}={\bf{B}}_{q,d}$. We carefully unpackage the structure of near-extremizers of (\ref{eq:main}) of the form $fe^{ig}1_{E}$ for even $q$. By proving that (possibly after composition with an affine function) $e^{ig}$ must be close to a multiple of a character and that $\|f-1\|_1$ and $|E\Delta \B|$ must be small, we guarantee that a Taylor expansion of $\|\widehat{fe^{ig}1_{E}}\|_q^q$ about $\|\widehat{1_{\B}}\|_q^q$ will have an error that we can control (see \textsection{\ref{taylorexpansion}}). 

Since $q$ is even, we can write  $\|\widehat{fe^{ig}1_{E}}\|_q^q$ as an $m$-fold convolution product using Plancherel's theorem:
\begin{align} 
\|\vwidehat{fe^{ig}1_{E}}&\|_{2m}^{2m}=\int_{E^{2m-1}} f(x_1)\cdots f(x_m)f(y_2)\cdots f(y_m)f(L(x,y))\times\label{star} \\
    &\cos(g(x_1)+\cdots +g(x_m)-g(y_2)-\cdots -g(y_m)-g(L(x,y)))1_{E}(L(x,y)) dxdy \nonumber 
\end{align}
where $x=(x_1,\ldots,x_m)\in\R^{md}$, $y=(y_2,\ldots,y_m)\in\R^{(m-1)d}$, and $L(x,y)=x_1+\cdots +x_m-y_2-\cdots y_m$. From this expression, it is clear that
\begin{align}
    \|\widehat{fe^{ig}1_{E}}\|_q&\le\|\widehat{1_{E}}\|_q \label{ob1}\\
    \|\widehat{fe^{ig}1_{E}}\|_q&\le\|\widehat{f1_{E}}\|_q\label{ob2}\\
    \|\widehat{fe^{ig}1_{E}}\|_q&\le\|\widehat{e^{ig}f1_{E}}\|_q. \label{ob3}
\end{align}

If $(1-\delta){\bf{B}}_{q,d}|E|^{1/p}\le \|\widehat{fe^{ig}1_{E}}\|_q$, then by (\ref{ob1}),
\[  (1-\delta){\bf{A}}_{q,d}|E|^{1/p}\le \|\widehat{1_{E}}\|_q
  \]
where ${\bf{A}}_{q,d}=\sup_{E}\frac{\|\widehat{1_E}\|_q}{|E|^{1/p}}$ and equals ${\bf{B}}_{q,d}$ since $q$ is even. By Christ's Theorem 2.6 in \cite{c2}, conclude that 
\[|T^{-1}(E)\Delta \B|\le 2\text{dist}(E,\mathfrak{E})\le O(\delta^{1/2})\]
where $T\in {\text{Aff}(\R^d)}$ is an affine automorphism of $\R^d$ and the big-O depends on dimension and is uniform for $q$ in a compact subset of $(3,\infty)$.

Replacing our near-extremizer $fe^{ig}1_{E}$ by $f\circ Te^{ig\circ T}1_E\circ T$, we may assume that $|E\Delta \B|\le O(\delta^{1/2})$. 

Define a measurable function $f_0:\R^d\to [0,1]$ by $f_0=f1_{E\cap \B}+1_{\B\setminus E}$. Note that 
\begin{align}  
\|\widehat{f1_E}\|_q&\le \|\widehat{f_01_{\B}}\|_q+\|\widehat{f1_E}-\widehat{f_01_{\B}}\|_q\le \|\widehat{f_01_{\B}}\|_q+\|f1_{E\setminus\B}-1_{\B\setminus E}\|_p\nonumber\\
& \le \|\widehat{f_01_{\B}}\|_q+|E\Delta\B|^{1/p}\le \|\widehat{f_01_{\B}}\|_q+O(\delta^{1/2p}). \label{ob4}
\end{align} 

In the following lemma, we consider $\|\widehat{f_01_{\B}}\|_q$.

\begin{lemma}\label{nearextf} Let $d\ge 1$ and let $q\ge 4$ be an even integer with conjugate exponent $p$. Then
\[ \|\widehat{f1_{\B}}\|_q^q \le \|\widehat{1_{\B}}\|_{q}^{q}-c\|f-1\|_{L^1(\B)} \]
where $c=\inf_{\B}K_q>0$. 
\end{lemma}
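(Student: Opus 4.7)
My plan is to exploit the even-exponent convolution identity together with a simple pointwise inequality to extract a linear-in-$(1-f)$ defect. Set $u := 1-f$, so $0 \le u \le 1$ on $\B$. By (\ref{star}) applied with $g \equiv 0$ and $E = \B$,
\[ \|\widehat{f1_{\B}}\|_{2m}^{2m} = \int_{\B^{2m-1}} \prod_{i=1}^{m}(1-u(x_i)) \prod_{j=2}^{m}(1-u(y_j)) \cdot (1-u(L)) \cdot 1_{\B}(L)\, dx\, dy, \]
where $L = L(x,y) = x_1+\cdots+x_m - y_2-\cdots-y_m$. The key step is the elementary observation that whenever $a_1,\ldots,a_N \in [0,1]$, one has $\prod_{i=1}^{N}(1-a_i) \le 1 - a_1$, since the remaining factors are each at most $1$. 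Applying this with the $(1-u(x_1))$ factor singled out and all other factors absorbed, the integrand is bounded above by $(1-u(x_1)) \cdot 1_{\B}(L)$, so
\[ \|\widehat{f1_{\B}}\|_{q}^{q} \le \int_{\B^{2m-1}} 1_{\B}(L)\, dx\, dy - \int_{\B^{2m-1}} u(x_1)\, 1_{\B}(L)\, dx\, dy. \]

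The first integral equals $\|\widehat{1_{\B}}\|_{q}^{q}$, by (\ref{star}) with $f \equiv 1$, $g \equiv 0$. For the second integral I would integrate out $x_2,\ldots,x_m,y_2,\ldots,y_m$ with $x_1$ held fixed, obtaining $\int_{\B} u(x_1) K_q(x_1)\, dx_1$ where
\[ K_q(x) := \int_{\B^{2m-2}} 1_{\B}\bigl( x + x_2+\cdots+x_m - y_2-\cdots-y_m \bigr)\, dx_2\cdots dx_m\, dy_2\cdots dy_m. \]
Using the symmetry $\B = -\B$ via the substitutions $y_j \mapsto -y_j$, one identifies $K_q$ on $\B$ with the $(2m-1)$-fold self-convolution $1_{\B}^{*(2m-1)}$, which is a continuous, nonnegative function on $\R^d$.

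To conclude I would verify $c := \inf_{\B} K_q > 0$. For any $x \in \B$ (including the boundary), the choice $x_i = -\varepsilon x/(2m-2)$ for $i \ge 2$ and $y_j = \varepsilon x/(2m-2)$ produces an argument of norm $1-\varepsilon$, which lies in the interior of $\B$; by continuity there is a neighborhood in $\B^{2m-2}$ on which the integrand equals $1$, so $K_q(x) > 0$ pointwise, and compactness gives $\inf_{\B} K_q > 0$. Combining, $\|\widehat{f1_{\B}}\|_{q}^{q} \le \|\widehat{1_{\B}}\|_q^q - c\|f-1\|_{L^1(\B)}$. I do not foresee any real obstacle here: the only subtlety is that a direct Taylor expansion of $F(1-u) = \|\widehat{(1-u)1_{\B}}\|_q^q$ around $u = 0$ produces higher-order terms of indefinite sign, whereas the pointwise inequality $\prod(1-a_i) \le 1 - a_1$ sacrifices some sharpness in exchange for a one-sided bound that is genuinely linear in $u$.
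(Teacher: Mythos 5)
Your proof is correct and is essentially the paper's argument: the pointwise bound $\prod_i(1-a_i)\le 1-a_1$ applied inside the $(2m-1)$-fold integral is exactly the paper's step of replacing all but one factor $f1_{\B}$ by $1_{\B}$ in the inner product $\langle f1_{\B}*\cdots*f1_{\B},\,f1_{\B}*\cdots*f1_{\B}\rangle$, and both then reduce to $\langle (f-1)1_{\B},K_q\rangle\le -\inf_{\B}K_q\,\|f-1\|_{L^1(\B)}$ with $K_q=1_{\B}^{*(2m-1)}>0$ on $\B$.
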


\begin{proof}

Letting $q=2m$, we have

\begin{align*} 
 \|\widehat{f1_{\B}}\|_{2m}^{2m} =&\|\widehat{1_{\B}}\|_{2m}^{2m}+\|\widehat{f1_{\B}}\|_{2m}^{2m}-\|\widehat{1_{\B}}\|_{2m}^{2m}\\
    &=\|\widehat{1_{\B}}\|_{2m}^{2m}+\langle f1_{\B}*\cdots *f1_{\B},f1_{\B}*\cdots *f1_{\B}\rangle-\langle 1_{\B}*\cdots *1_{\B},1_{\B}*\cdots *1_{\B}\rangle \\
&\le  \|\widehat{1_{\B}}\|_{2m}^{2m}+\langle f1_{\B}*1_{\B}\cdots *1_{\B},1_{\B}*\cdots *1_{\B}\rangle-\langle 1_{\B}*\cdots *1_{\B},1_{\B}*\cdots *1_{\B}\rangle \\
&= \|\widehat{1_{\B}}\|_{2m}^{2m}+\langle (f-1)1_{\B},K_q \rangle \\
&\le \|\widehat{1_{\B}}\|_{2m}^{2m}-c\|f-1\|_{L^1(\B)}
\end{align*}
where each convolution product has $m$ factors, $\widehat{K_q}=\widehat{1_{\B}}|^{q-2}\widehat{1_{\B}}$ and we used that  $K_q>0$ on $\B$.
\end{proof}

Combine (\ref{ob4}) with Lemma \ref{nearextf} to reason that if $fe^{ig}1_E$ is a near-extremizer and $|E\Delta \B|\le O(\delta^{1/2})$, then 
\[ \|f-1\|_{L^1(E)}\le \|f_0-1\|_{L^1(\B)}+2|E\Delta\B|\le O(\delta^{1/2p}).  \]
Now define $g_0:\R^d\to \R$ by $g_0=1_{E\cap \B}g$. Then 
\begin{align*}
\|\widehat{fe^{ig}1_E}\|_q&\le \|\widehat{e^{ig_0}1_{\B}}\|_q+\|e^{ig}1_E-e^{ig_0}1_{\B}\|_p+\|(f-1)1_E\|_p \\
    &\le \|\widehat{e^{ig_0}1_{\B}}\|_q+|E\Delta\B|^{1/p}+\|(f-1)1_E\|_1^{1/p}\\
    &=\|\widehat{e^{ig_0}1_{\B}}\|_q+ O(\delta^{1/2p^2}). 
\end{align*}
Thus it remains to understand the case in which $e^{ig_0}1_{\B}$ is a near extremizer. The naive approach used to understand $f1_{\B}$ in Lemma \ref{nearextf} breaks down when considering $e^{ig}1_{\B}$ since an expression with $g$ appears within the argument of the cosine in (\ref{star}). One tool we have at our disposal is the following Proposition 8.2 of Christ from \cite{c3}, stated here for the reader's convenience.

\begin{lemma}\label{younglem} For each dimension $d\ge 1$ there exists a constant $K<\infty$ with the following property. Let $B\subset\R^d$ be a ball centerof positive radius, and let $\eta\in(0,\frac{1}{2}]$, and let $\delta>0$ be sufficiently small. For $j\in\{1,2,3\}$, let $f_j:2B\to\C$ be Lebesgue measurable
functions that vanish only on sets of Lebesgue measure zero. Suppose that 
\[ |\{(x, y)\in B^2: |f_1(x)f_2(y)f_3(x+y)^{-1}-1| > \eta\}|<\delta|B|^2. \]
Then for each index $j$ there exists an affine function $L_jn : \R^d\to\C$ such that
\[|\{x \in B : |f_j(x)e^{-L_j(x)}-1| > K\eta^{1/K} \}| \le  K\delta|B|.\]
\end{lemma}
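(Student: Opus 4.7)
The plan is to recognize the hypothesis as an approximate version of the multiplicative Cauchy functional equation $f_1(x) f_2(y) = f_3(x+y)$, whose exact solutions on a ball are exponentials, and then to establish a quantitative Hyers--Ulam-type stability result.

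After rescaling so that $B$ is the unit ball, denote by $G \subset B^2$ the good set on which $|f_1(x) f_2(y) f_3(x+y)^{-1} - 1| \le \eta$, so $|B^2 \setminus G| < \delta |B|^2$. For two pairs $(x,y), (x',y') \in G$ sharing the same sum $x + y = x' + y'$, taking ratios gives
\[
\frac{f_1(x)}{f_1(x')} = \frac{f_2(y')}{f_2(y)}\bigl(1 + O(\eta)\bigr).
\]
By Fubini under the change of variables $(x,x',y,y')\mapsto (h,x',y,h')$ with $h := x - x' = y' - y$, and a pigeonholing argument, for most values of $h$ the ratio $x \mapsto f_1(x)/f_1(x-h)$ is essentially independent of $x$ up to error $O(\eta)$; I would define $\Phi(h)$ to be a suitably truncated average. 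Then for all but a set of $h$ of measure $O(\delta)|B|$, both $f_1(x)/f_1(x-h) = \Phi(h)(1+O(\eta))$ and $f_2(y+h)/f_2(y) = \Phi(h)(1+O(\eta))$ hold on large subsets. The hypothesis that each $f_j$ vanishes only on a null set ensures these ratios are defined a.e.

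Chaining shifts gives an approximate homomorphism relation $\Phi(h+h') = \Phi(h)\Phi(h')(1 + O(\eta))$ on a set of $(h,h')$ of measure at least $(1-O(\delta))|B-B|^2$. Here I would invoke a Hyers--Ulam-type stability result for the Cauchy equation: such a near-homomorphism coincides, off an exceptional set of measure $\le K_1 \delta |B|$, with a true exponential $e^{\ell(h)}$ for some $\C$-linear functional $\ell$, with pointwise discrepancy $K_1 \eta^{1/K_1}$. The exponential is produced by iterated averaging of $\Phi$ over small translations to yield a smoothed version whose logarithm can be unambiguously defined on the (simply connected) small-error region; one then shows this logarithm is approximately additive and hence uniformly close to an affine function. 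Substituting back, $f_1(x) = c_1 e^{\ell(x)}(1+O(\eta^{1/K}))$ on a set of measure $\ge (1-K\delta)|B|$, and absorbing $c_1$ yields $L_1$. The symmetry of the hypothesis under relabeling $(x,y) \mapsto (x, -x-y)$ or $(y,-x-y)$ (with $f_j^{-1}$ playing appropriate roles) produces $L_2$ and $L_3$.

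The main obstacle is the Cauchy-stability step: balancing the measure of the exceptional set (which must stay linear in $\delta$) against the pointwise error (polynomial in $\eta$ with a uniform exponent $1/K$) requires careful orchestration of iterated Fubini arguments and choice of averaging scales. A secondary subtlety is that the $f_j$ are $\C$-valued and may wind, so one cannot naively take logarithms globally; the resolution is to work with multiplicative ratios throughout, which are constrained to a small neighborhood of $1$ on the good set, and to recover the affine functions $L_j$ only at the final exponentiation step.
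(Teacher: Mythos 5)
First, a point of comparison: the paper does not prove this lemma at all. It is quoted verbatim (as the text says, ``the following Proposition 8.2 of Christ from \cite{c3}, stated here for the reader's convenience''), so the only ``proof'' in the paper is a citation to Christ's work on near-extremizers of Young's inequality. Your outline does follow the same general shape as Christ's actual argument there: pass to ratios along level sets of $x+y$ to produce an approximate multiplicative character $\Phi(h)$, then upgrade approximate multiplicativity to an exact exponential and substitute back.

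However, as a proof your proposal has a genuine gap exactly at the step you flag as ``the main obstacle.'' The assertion that a function $\Phi:\,B-B\to\C^*$ satisfying $\Phi(h+h')=\Phi(h)\Phi(h')(1+O(\eta))$ for all $(h,h')$ outside a set of measure $O(\delta)$ must agree with $e^{\ell(h)}$, $\ell$ affine, up to pointwise error $K\eta^{1/K}$ off a set of measure $K\delta|B|$, is not a citable off-the-shelf Hyers--Ulam theorem; in this quantitative, measure-theoretic, $\C^*$-valued form it is essentially the content of the lemma being proved, so invoking it is circular. Two concrete difficulties are buried there. (i) Your claim that the multiplicative ratios ``are constrained to a small neighborhood of $1$ on the good set'' is false: only the triple products $f_1(x)f_2(y)f_3(x+y)^{-1}$ are near $1$; the ratios $f_1(x)/f_1(x-h)$, hence $\Phi(h)$, range over all of $\C^*$, so there is no region on which a branch of $\log\Phi$ is canonically defined, and the $2\pi i\Z$ ambiguity in the phase is precisely what makes the stability of approximate characters delicate (the modulus part reduces to an additive a.e.-Cauchy stability statement for $\log|\Phi|$ and is comparatively routine; the phase part is not). (ii) The conclusion's exponent $\eta^{1/K}$ signals unavoidable losses from repeated Chebyshev/pigeonhole exchanges of measure for pointwise control, and your outline never tracks where these losses occur or why they remain polynomial in $\eta$ with a uniform exponent while the exceptional set stays linear in $\delta$. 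Without an actual argument for the stability step --- or a precise reference to one --- the proposal is a plausible reduction of the lemma to an unproved statement of comparable depth, which is why the paper simply cites \cite{c3} rather than reproving it.
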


We will use this lemma in order to obtain structure for $g$, as described in the following proposition.

\begin{proposition}\label{nearextg} Let $d\ge 1$ and let $q\ge 4$ be an even integer with conjugate exponent $p$. There exist positive constants $\tilde{K}$ $\delta_0>0$, depending only on $d$, with the following property. Suppose that $(1-\delta){\bf{B}}_{q,d}|\B|^{1/p}\le \|e^{ig}1_{\B}\|_q$ for $\delta\le\delta_0$ and $g$ real-valued. Then there exists an affine function $L_1:\R^d\to\R$ such that
\begin{align*} 
\int_{\B}|e^{iL_1(x)-ig(x)}-1|dx&\le \tilde{K}\delta^{1/(8\tilde{K})}.
\end{align*} 
\end{proposition}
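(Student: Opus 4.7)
\emph{Setup.} The plan is to convert the hypothesis into a near-additivity condition on $g$ and then apply Lemma \ref{younglem} to extract an affine structure. Since $q=2m$ is even, Theorem \ref{christmain} together with \eqref{useful} gives ${\bf{B}}_{q,d}=\|\widehat{1_{\B}}\|_{q}/|\B|^{1/p}$, so the hypothesis yields
\[ \|\widehat{1_{\B}}\|_{2m}^{2m}-\|\widehat{e^{ig}1_{\B}}\|_{2m}^{2m}\le C_m\delta\,\|\widehat{1_{\B}}\|_{2m}^{2m}. \]
By Plancherel (the expansion \eqref{star} with $f\equiv 1$ and $E=\B$), the left-hand side equals $\int_{\B^{2m-1}}(1-\cos\Theta(x,y))\,1_{\B}(L(x,y))\,dx\,dy$ where $\Theta(x,y)=\sum_{i=1}^{m}g(x_i)-\sum_{j=2}^{m}g(y_j)-g(L(x,y))$. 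Using $1-\cos\theta=\tfrac12|e^{i\theta}-1|^2$ and Markov's inequality, I obtain that off an exceptional subset of measure $O(\delta/\eta^2)$ in $\{(x,y)\in\B^{2m-1}:L(x,y)\in\B\}$, the bound $|e^{i\Theta(x,y)}-1|\le\eta$ holds.

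\emph{Three-variable reduction and Young's lemma.} Confining the auxiliary variables $x_3,\ldots,x_m,y_2,y_3,\ldots,y_m$ to a small sub-ball $B(0,\rho)$ with $\rho=\rho(m)$ and invoking Fubini and pigeonhole, I select specific values $x_3^0,\ldots,y_m^0$ so that, writing $d=\sum_{i\ge 3}x_i^0-\sum_{j\ge 2}y_j^0$ and $S=\sum_{i\ge 3}g(x_i^0)-\sum_{j\ge 2}g(y_j^0)$, the set
\[ \{(x_1,x_2)\in\B^2:\ x_1+x_2+d\in\B\text{ and }|e^{i(g(x_1)+g(x_2)-g(x_1+x_2+d)+S)}-1|>\eta\} \]
still has measure $O(\delta/\eta^2)$. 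With $\rho$ small enough to force $|d|<1$, I fix a ball $B\subset\R^d$ with $2B\subseteq \B-d$ and apply Lemma \ref{younglem} to $f_1(x)=f_2(x)=e^{ig(x)}$, $f_3(z)=e^{ig(z+d)-iS}$. This produces a complex affine function $L^{\C}_1:\R^d\to\C$ such that $|e^{ig(x)}-e^{L^{\C}_1(x)}|\le K\eta^{1/K}$ on $B$ outside a set of measure $O(\delta/\eta^2)\cdot|B|$.

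\emph{Passage to a real affine function on $\B$ and parameter balancing.} Because $|e^{ig}|\equiv 1$, on the good subset of $B$ we have $|\Re(L^{\C}_1(x))|=O(\eta^{1/K})$; since an affine function small on a set of nearly full measure in a ball is uniformly small on the ball, $\|\Re(L^{\C}_1)\|_{L^{\infty}(B)}=O(\eta^{1/K})$. Setting $L_1:=\Im(L^{\C}_1)$, a real-valued affine function, gives $|e^{ig}-e^{iL_1}|=O(\eta^{1/K})$ off a small exceptional subset of $B$. To promote the bound from $B$ to $\B$, I rerun the argument after translating the configuration by various $x_0\in\B$, obtaining local affine approximants on balls covering $\B$; by continuity of $g$ and the discreteness of $2\pi i\Z$, these approximants agree on overlaps up to additive constants in $2\pi i\Z$ and therefore assemble into a single global real affine function (still called $L_1$) on $\R^d$ with the same pointwise estimate on $\B$ (with an additional constant loss). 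Balancing $\eta=\delta^{a}$ for an appropriate $a\in(0,1)$ then gives $\int_{\B}|e^{iL_1(x)-ig(x)}-1|\,dx=O(\delta^{c})$ for some $c>0$ depending on $d$, which is the claimed bound after absorbing constants into $\tilde{K}$.

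The main obstacle is the passage from the sub-ball $B$ (where Lemma \ref{younglem} applies) to all of $\B$, requiring the patching of local affine approximants while preserving a quantitative estimate. Secondary technical difficulties include careful bookkeeping of the exponents through Markov's inequality, Fubini/pigeonhole, and the H\"older-type loss $\eta^{1/K}$ of Lemma \ref{younglem}.
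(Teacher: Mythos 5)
Your argument tracks the paper's proof closely up through the application of Lemma \ref{younglem}: both start from the expansion \eqref{star}, both reduce to a statement about a two-variable slice by pigeonholing over the auxiliary variables confined near the origin (so that the shift $d$ is small and the indicator $1_{\B}(L(x,y))$ can be removed by shrinking to a ball $B$ of radius about $(1-|d|)/2$), and both pass from the complex affine output of Young's lemma to its imaginary part by noting that the real part must be uniformly small. The minor differences there (Markov applied directly to $1-\cos\Theta$ versus the paper's Cauchy--Schwarz followed by Chebyshev; a fixed radius $\rho(m)$ versus a $\delta$-dependent nearly-minimal choice of the auxiliary point) are harmless.

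The genuine gap is in the final step, the promotion of the estimate from the half-sized ball $B$ to all of $\B$. Your patching argument invokes ``continuity of $g$,'' but $g$ is only assumed to be a real-valued measurable function, so the approximants on overlapping balls need not literally agree up to constants in $2\pi\Z$; they agree only in the sense that $e^{iL}\approx e^{iL'}$ off exceptional sets, and aligning them requires a quantitative rigidity lemma (an affine function whose values lie within $\epsilon$ of $2\pi\Z$ on most of a ball of unit scale is within $O(\epsilon)$ of a single element of $2\pi\Z$) that you do not supply. Moreover, it is not checked that rerunning the argument at translated configurations actually produces approximants on balls covering $\B$ up to its boundary: Lemma \ref{younglem} constrains the sum $x_1+x_2$ to lie in a concentric doubled ball, and for base points near $\partial\B$ the shift $d$ can no longer be taken small, which breaks the pigeonholing as you set it up. The paper avoids all of this with a single additional use of the near-additivity relation itself: having shown $e^{ig}\approx e^{i\,\mathrm{Im}\,L_0}$ on most of $B(r)$, it feeds this back into the two-variable relation to conclude that $e^{i\,\mathrm{Im}\,L_0(x_1+x_2)}e^{i\alpha(x_1+x_2,x',y)}\approx 1$ for most $(x_1,x_2)\in B(r)^2$, and since $\alpha(w,x',y)$ equals a constant minus $g(w+a)$, the lower bound $1_{B(r)}*1_{B(r)}\gtrsim \eta$ on $B(2r(1-\eta))$ converts this into an estimate for $g$ on $B(2r)$, which already exhausts $\B$ up to an annulus of measure $O(|a|)$ absorbed into the error. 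You should replace the covering-and-patching step with this self-improvement step (or else prove the rigidity lemma and the boundary covering claim in full).
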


\begin{proof} Let $q=2m$. We use the expression
\begin{align*} 
\|\vwidehat{e^{ig}1_{\B}}\|_{q}^{q}=\int_{\B^{q-1}} \cos(g(x_1)+\cdots +g(x_m)-g(y_2)-\cdots -g(y_m)-g(L(x,y)))1_{\B}(L(x,y)) dxdy 
\end{align*}
where $x=(x_1,\ldots,x_m)\in \R^{md}$, $y=(y_2,\ldots,y_m)\in\R^{(m-1)d}$, and $L(x,y)=x_1+\cdots +x_m-y_2-\cdots y_m$. Let $A(x,y)=g(x_1)+\cdots +g(x_m)-g(y_2)-\cdots -g(y_m)-g(L(x,y))$. Since $e^{ig}1_{\B}$ is a near-extremizer, we have
\begin{align*}
(1-\delta){\bf{B}}_{q,d}|\B|^{1/{p}}&\le \|\vwidehat{e^{ig}1_{\B}}\|_{q}^{q}= \|\widehat{1_{\B}}\|_{q}^{q}+\|\vwidehat{e^{ig}1_{\B}}\|_{q}^{q}-\|\widehat{1_{\B}}\|_{q}^{q}\\
&= {\bf{B}}_{q,d}-\int_{\B^{q-1}} |\cos(A(x,y))-1|1_{\B}(L(x,y)) dxdy,
\end{align*}
so $\int_{\B^{q-1}}|\cos(A(x,y))-1|1_{\B}(L(x,y))dxdy\le {\bf{B}}_{q,d}|\B|^{1/p}\delta$. We use this in the following:

\begin{align*}
\int_{\B^{q-1}} |e^{iA(x,y)}-1|1_{\B}(L(x,y)) dxdy&=   \int_{\B^{q-1}} (\cos(A(x,y))-1)^2+(\sin(A(x,y)))^2)^{1/2}1_{\B}(L(x,y)) dxdy  \\
    &=   \int_{\B^{q-1}} \sqrt{2} |\cos(A(x,y))-1|^{1/2}1_{\B}(L(x,y)) dxdy \\
    &\le \sqrt{2}|\B^{q-1}|^{1/2}\left(\int_{\B^{q-1}}  |\cos(A(x,y))-1|1_{\B}(L(x,y)) dxdy\right)^{1/2}\\
    &\le \sqrt{2}|\B|^{(q-1)/2}({\bf{B}}_{q,d}|\B|^{1/p}\delta)^{1/2}.
\end{align*}
Set 
\begin{align}\tilde{\delta}= \int_{\B^{q-1}} |e^{iA(x,y)}-1|1_{\B}(L(x,y)) dxdy\le C\delta^{1/2}. \label{goodolddays}\end{align}
Note that sometimes we will abuse the notation $L$: $L(x,y)$ where $x\in\R^{md}$, $y\in\R^{(m-1)d}$ and $L(x_1+x_2,x',y)$ where $x_1,x_2\in\R^d$, $x'\in\R^{(m-2)d}$, $y\in\R^{(m-1)d}$ mean the same thing. Define the function $\a$ by
\begin{align} 
\a(x_1+x_2,x',y):= g(x_3)+\cdots +g(x_m)-g(y_2)-\cdots -g(y_m)-g(L(x_1+x_2,x',y)) \label{defalpha}  
\end{align}
so that $A(x,y)= g(x_1)+g(x_2)+\a(x_1+x_2,x',y)$ for $L(x,y)\in \B$. Define the set $S_{\tilde{\delta}}\subset\R^{(q-3)d}$ to be
\[ S_{\tilde{\delta}}:=\left\{(x',y)\in \B^{q-3}:\int_{\B^2}|e^{i(g(x_1)+g(x_2)+\a(x_1+x_2,x',y))}-1|1_{\B}(L(x,y))dx_1dx_2>{\tilde{\delta}}^{1/2}\right\}. \]

Using Chebyshev's inequality in (\ref{goodolddays}), we know that $|S_{\tilde{\delta}}|\le {\tilde{\delta}}^{1/2}$. Fix an $(x',y)\in \B^{q-3}\setminus S_{\tilde{\delta}}$ such that $|(x',y)|<2\inf\{|(w',z)|: (w',z)\in \B^{q-3}\setminus S_{\tilde{\delta}}\}$. Since $|S_{\tilde{\delta}}|\le {\tilde{\delta}}^{1/2}$, there must be some positive intersection between $\B^{q-3}\setminus S_{\tilde{\delta}}$ and the ball in $\R^{(q-3)d}$ centered at the origin of radius $c^{-1/((q-3)d)}_{(q-3)d}(2{\tilde{\delta}})^{1/(2(q-3)d)}$, where $c_{(q-3)d}$ is the volume of the $(q-3)d$-dimensional unit ball. Thus 
\[ |(x',y)|\le 2c^{-1/((q-3)d}_{(q-3)d}(2{\tilde{\delta}})^{1/(2(q-3)d)}=: b(q,d) {\tilde{\delta}}^{1/(2(q-3)d)}.\]
For our fixed $(x',y)$, let $a$ denote 
\begin{align}
a:=x_3'+\cdots x_m'-y_2-\cdots-y_m.\label{defa}
\end{align} Note that 
\begin{align*} 
|a|&\le |x_3'|+\cdots+ |x_m'|+|y_2|+\cdots+|y_m| \\
    &\le (q-3)^{1/2}(|x_3'|^2+\cdots+ |x_m'|^2+|y_2|^2+\cdots+|y_m|^2)^{1/2}\\
    &=(q-3)^{1/2}|(x',y)|^{1/2}<(q-3)^{1/2}b(q,d) {\tilde{\delta}}^{1/(2(q-3)d)}.  
\end{align*}

Since $(x',y)\not\in S_{\tilde{\delta}}$, we apply Chebyshev's inequality again to get
\begin{align} 
|\{(x_1,x_2)\in\B^2:|e^{i(g(x_1)+g(x_2)+\a(x_1+x_2,x',y))}-1|1_{\B}(L(x,y))>{\tilde{\delta}}^{1/4}\} | < {\tilde{\delta}}^{1/4} \label{assump}. 
\end{align}

In order to satisfy the hypotheses of Lemma \ref{younglem}, we need to eliminate the indicator function $1_{\B}(L(x,y))$ from the set. We accomplish this by shrinking the size of the ball. Indeed let $r:= \frac{1}{2}(1-|a|)$. It is now clear why we chose $(x',y)$ with nearly minimal modulus: to make $r$ reasonably close to $1/2$. Then  
\begin{align*}  
&|\{(x_1,x_2)\in B(r)^2: |e^{i(g(x_1)+g(x_2)+\a(x_1+x_2,x',y))}-1|>{\tilde{\delta}}^{1/4} \}| \\
&\quad\le | \{(x_1,x_2)\in \B^2: |e^{i(g(x_1)+g(x_2)+\a(x_1+x_2,x',y))}-1|1_{\B}(L(x,y))>{\tilde{\delta}}^{1/4}\}|\le {\tilde{\delta}}^{1/4} , 
\end{align*} 
where $B(r)$ denotes the $d$-dimensional ball of radius $r$ centered at the origin. 
Thus the hypotheses of Lemma \ref{younglem} are satisfied. The lemma guarantees the existence of an affine function $L_0:\R^d\to\C$ such that 

\begin{align}   |\{x\in B(r): |e^{ig(x)}e^{-L_0(x)}-1|>K{\tilde{\delta}}^{1/(4K)}\}|\le K{\tilde{\delta}}^{1/4}r^{-d}  \label{L_0}\end{align}
where $K>0$ depends only on the dimension. Note that there exists a constant $c>0$ such that $|e^{ig(x)}e^{-L_0(x)}-1|\ge c|e^{ig(x)}e^{-i\text{Im}L_0(x)}-1|$ for all $x\in \B$. Then for $K'= K/c$, it follows from (\ref{L_0}) that 

\[ |\{x\in B(r): |e^{ig(x)-i\Im\, L_0(x)}-1|>K'{\tilde{\delta}}^{1/(4K)}\}|\le K{\tilde{\delta}}^{1/4}r^{-d}  . \label{ImL_0} \]

Since we know that $e^{ig}\approx e^{i\Im\, L_0}$ on the majority of $B(r)$, we can use the set whose measure is bounded in (\ref{assump}) to make conclusions about $\a$ on the set $B(r)+B(r)$ (which by the definition of $\a$ gives us information about $g$ on $B(2r)$). More precisely, we have that 

\begin{align}
|\{(x_1,x_2)\in& B(r)^2:|e^{i\Im\, L_0(x_1+x_2)+i\a(x_1+x_2,x',y)}-1|>4K'{\tilde{\delta}}^{1/(4K)}\} |\le \nonumber \\
&\quad |\{(x_1,x_2)\in B(r)^2:|e^{i\Im\, L_0(x_1+x_2)-ig(x_1)-ig(x_2)}-1|>2K'{\tilde{\delta}}^{1/(4K)}\} |  \nonumber \\
    &\quad+|\{(x_1,x_2)\in B(r)^2:|e^{i(g(x_1)+g(x_2)+\a(x_1+x_2,x',y))}-1|>2K'{\tilde{\delta}}^{1/(4K)}\} | \nonumber \\
&\le 2|\{(x_1,x_2)\in B(r)^2:|e^{i\Im\, L_0(x_1)-ig(x_1)}-1|>K'{\tilde{\delta}}^{1/(4K)}\} | +{\tilde{\delta}}^{1/4} \nonumber \\
&\le 2 c_dr^d K{\tilde{\delta}}^{1/4}r^{-d}  +{\tilde{\delta}}^{1/4} (2c_dK+1){\tilde{\delta}}^{1/4} \label{alphaL_0bound}
\end{align}
where $c_d$ is the volume of the $d$-dimensional unit ball. 

Recalling the definition of $\a$ from (\ref{defalpha}) and $a$ from (\ref{defa}) we write for $x_1,x_2\in B(r)$ 
\begin{align*}
e^{i\Im\, L_0(x_1+x_2)+i\a(x_1+x_2,x',y)}&=e^{i\Im\, L_0(x_1+x_2)+ig(x_3)+\cdots+ig(x_m)-ig(y_2)-\cdots-ig(y_m) -ig(L(x_1+x_2,x',y)) }\nonumber \\
    &=e^{i\Im\, L_0(x_1+x_2)+ig(x_3)+\cdots+ig(x_m)-ig(y_2)-\cdots-ig(y_m) -ig(x_1+x_2+a) }\nonumber.
\end{align*}
Combining this expression with the bound from line (\ref{alphaL_0bound}), we have actually shown that for $L_1:\R^d\to\R$ the affine function defined by $L_1(w)=\Im\, L_0(w)-\Im\, L_0(a)+ig(x_3)+\cdots+ig(x_m)-ig(y_2)-\cdots-ig(y_m)$,
\begin{align}
    |\{(x_1,x_2)\in B(r)^2: |e^{iL_1(x_1+x_2+a) -ig(x_1+x_2+a) }-1|>4K'{\tilde{\delta}}^{1/4}\}|&\le (2c_dK+1){\tilde{\delta}}^{1/4} .\label{prodbound}
\end{align}
Let $A$ denote the set whose measure is bounded above in (\ref{prodbound}). Let $E\subset \R^d$ denote the set 
\[ E:=\{x\in B(2r):|e^{L_1(w+a) -ig(w+a) }-1|>4K'{\tilde{\delta}}^{1/4}\}. \]

Writing line (\ref{prodbound}) in reverse order, we relate $A$ to $E$ as follows.
\begin{align*}
(2c_dK+1){\tilde{\delta}}^{1/4}&\ge \iint_{B(r)^2}1_{A}(x_1,x_2)dx_1dx_2\\
    &=  \iint_{B(r)^2}1_{E}(x_1+x_2)dx_1dx_2\\
    &= \langle 1_{B(r)}*1_{B(r)},1_E\rangle \\
    &\ge \left(\inf\{1_{B(r)}*1_{B(r)}(x):x\in B(1-\eta)\}\right)|E\cap B(1-\eta)|.   
\end{align*}
Note that there is a dimensional constant $a_d>0$ such that $1_{\B}(r)*1_{\B}(r)(x)>a_d\eta$ if $\eta<1$ and $x\in B(1-\eta)$. Thus if we pick $\eta={\tilde{\delta}}^{1/8}$,
\begin{align*}
|E\cap B(1-{\tilde{\delta}}^{1/8})|&\le a_d^{-1}(2c_dK+1){\tilde{\delta}}^{1/8}. 
\end{align*}
Putting everything together, we can now bound the $L^1$ norm of $|e^{iL_1-ig}-1|$:

\begin{align*} 
\int_{\B}|e^{iL_1(x)-ig(x)}-1|dx&=\int_{E\cap B(1-{\tilde{\delta}}^{1/8})}|e^{iL_1(x)-ig(x)}-1|dx+\int_{E\setminus B(1-{\tilde{\delta}}^{1/8})}|e^{L_1(x)-ig(x)}-1|dx\\
    &\quad +\int_{\B\setminus E} |e^{L_1(x)-ig(x)}-1|dx\\
    &\le \left[a_d^{-1}(2c_dK+1){\tilde{\delta}}^{1/8}+c_d(d+1){\tilde{\delta}}^{1/8}\right](2)+c_d4K'{\tilde{\delta}}^{1/(4K)}. 
\end{align*} 

\end{proof}

Finally, we consider maximizers of the form $e^{ig}1_{\B}$.

\begin{lemma}\label{extg} Let $d\ge 1$. Suppose that $q\ge 4$ is an even integer and that $\|\widehat{e^{ig}1_{\B}}\|_q^q={\bf{B}}_{q,d}$. Then there exists an affine function $L:\R^d\to\R$  such that $e^{ig}=e^{iL}$ on $\B$.

\end{lemma}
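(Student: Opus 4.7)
The cleanest approach is to invoke Proposition \ref{nearextg} with $\delta=0$. The hypothesis of that proposition reads $(1-\delta){\bf{B}}_{q,d}|\B|^{1/p}\le \|\widehat{e^{ig}1_{\B}}\|_q$ for any $\delta\le\delta_0$; taking $\delta=0$ is permitted since $\delta_0>0$, and this becomes the normalized equality case of our hypothesis (reading the displayed identity in Lemma \ref{extg} in its natural form $\|\widehat{e^{ig}1_{\B}}\|_q^q={\bf{B}}_{q,d}^q|\B|^{q/p}$). The conclusion of Proposition \ref{nearextg} then supplies an affine function $L_1:\R^d\to\R$ satisfying
\[
\int_{\B}|e^{iL_1(x)-ig(x)}-1|\,dx \;\le\; \tilde{K}\cdot 0^{1/(8\tilde{K})} \;=\; 0,
\]
so $e^{ig}=e^{iL_1}$ almost everywhere on $\B$, and we take $L=L_1$.

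The reason this strategy works is that the proof of Proposition \ref{nearextg} is \emph{quantitative}: each successive estimate --- starting from the Plancherel expansion (\ref{star}), passing through the Cauchy--Schwarz conversion from $\cos(A)\approx 1$ to $e^{iA}\approx 1$, and culminating in the invocation of Lemma \ref{younglem} --- produces an error bound that tends to zero with $\delta$. At $\delta=0$ every such bound collapses to zero, so the qualitative equality conclusion follows without additional work.

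A self-contained alternative would start from the identity (\ref{useful}) together with Theorem \ref{christmain}, which jointly give ${\bf{B}}_{q,d}^q|\B|^{q/p}=\|\widehat{1_{\B}}\|_q^q$ for even $q$. The equality hypothesis on $e^{ig}1_{\B}$ combined with the cosine representation in (\ref{star}) then forces $\cos(A(x,y))=1$ for almost every $(x,y)\in\B^{2m-1}$ with $L(x,y)\in\B$, so $A(x,y)\in 2\pi\Z$ on this full-measure set. The remaining task --- converting this Cauchy-type functional equation ``modulo $2\pi\Z$'' into a genuine affine structure for $g$ modulo $2\pi$ --- is exactly what Lemma \ref{younglem} accomplishes. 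This final conversion is where any potential obstacle would lie, but it is already packaged inside the proof of Proposition \ref{nearextg}, which is why the direct appeal outlined above is the most efficient route.
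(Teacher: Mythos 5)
Your route is genuinely different from the paper's: the paper derives an exact functional equation from the equality $\|\widehat{e^{ig}1_{\B}}\|_q=\|\widehat{1_{\B}}\|_q$ via (\ref{star}) and then solves it by mollification and differentiation, whereas you try to obtain the rigidity statement as the $\delta=0$ endpoint of the quantitative Proposition \ref{nearextg}. The idea is reasonable, but the step ``taking $\delta=0$ is permitted'' is not actually supported. The statement of Proposition \ref{nearextg} formally allows $\delta\le\delta_0$, but its \emph{proof} does not go through at $\delta=0$: (i) it selects $(x',y)\in\B^{q-3}\setminus S_{\tilde\delta}$ with $|(x',y)|<2\inf\{|(w',z)|:(w',z)\in\B^{q-3}\setminus S_{\tilde\delta}\}$, and when $\tilde\delta=0$ the set $S_{\tilde\delta}$ is null, so that infimum is $0$ and no such point exists; and (ii) the key input, Lemma \ref{younglem}, explicitly hypothesizes ``let $\delta>0$ be sufficiently small,'' so it cannot be invoked with exceptional set of measure exactly zero except by applying it for each $\delta>0$ separately, in which case the affine function it produces a priori depends on $\delta$. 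So the clean identity $\tilde K\cdot 0^{1/(8\tilde K)}=0$ is not something the proposition's proof delivers.

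The repair is a short compactness argument that you should make explicit. For each $\delta_n\downarrow 0$ the hypothesis of Proposition \ref{nearextg} holds (strict near-extremality is implied by exact extremality), so you obtain real affine $L^{(n)}$ with $\|e^{iL^{(n)}}-e^{ig}\|_{L^1(\B)}\le \tilde K\delta_n^{1/(8\tilde K)}\to 0$. Writing $L^{(n)}(x)=a_n\cdot x+b_n$, the coefficients $(a_n,b_n)$ must remain bounded (modulo $2\pi$ in $b_n$): if $|a_n|\to\infty$ along a subsequence, then $e^{iL^{(n)}}\to 0$ weakly on $\B$ by the Riemann--Lebesgue lemma, contradicting strong $L^1$ convergence to the unimodular function $e^{ig}$. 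Passing to a convergent subsequence $(a_n,b_n)\to(a,b)$ gives $e^{ig}=e^{i(a\cdot x+b)}$ a.e.\ on $\B$. With that paragraph added your argument is complete; alternatively you can follow the paper's route, which avoids the limiting step by working directly with the exact functional equation $A(x,y)\in 2\pi\Z$ a.e.\ and smoothing.
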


This lemma follows from a standard argument. Using the expression from (\ref{star}), the equality $\|\widehat{e^{ig}1_{\B}}\|_q={\bf{B}}_{q,d}=\|\widehat{1_{\B}}\|_q$ leads to a functional equation. By taking smooth approximations (which still satisfy the functional equation) and using derivatives, we find that $e^{ig}$ has the desired form.

\subsection{Proof strategy for Theorem \ref{mainthm}.\label{suff}}

Fix a dimension $d\ge 1$ and an even integer $\overline{q}\ge 4$. By Lemma \ref{sufficient}, to prove Theorem \ref{mainthm}, it is sufficient to find $\delta>0$ and $\rho>0$ so that if $|q-\overline{q}|<\rho$ and if
\begin{align}  \|\widehat{fe^{ig}1_E}\|_q^q \ge {\bf{B}}_{q,d}^q-\delta  \end{align}
where $f,g$ are real valued with $0\le f\le 1$ and $E\subset\R^d$ is a Lebesgue measurable subset with $|E|=1$, then 
\begin{align*} \|\widehat{fe^{ig}1_E}\|_q^q\le {\bf{B}}_{q,d}^q-c_{q,d}\left[ \|f-1\|_{L^1(E)}+\text{dist}_E(e^{ig},\mathfrak{L})^2+\text{dist}(E,\mathfrak{E})^2 \right] .    \end{align*}
By Corollary \ref{suffcor}, the hypothesis $\|\widehat{fe^{ig}1_E}\|_q^q\ge {\bf{B}}_{q,d}^q-\delta$ implies that 
\[ \|\widehat{fe^{ig}1_E}\|_{\overline{q}}^{\overline{q}}\ge {\bf{B}}_{\overline{q},d}^{\overline{q}}-o_{q-\overline{q}}(1)-\delta .\]

If $\delta,\rho$ are sufficiently small, then by the previous section, there exists an affine automorphism $T:\R^d\to\R^d$ so that $|T^{-1}(E)|=|\B|$ and $|T^{-1}(E)\Delta\B|\le 2\text{dist}(E,\mathfrak{E})\le C_{\overline{q},d}(o_{q-\overline{q}} (1)+ \delta^{1/2})$. Let $\overline{p}$ be the conjugate exponent to $\overline{q}$.

Define $f':\B\to [0,1]$ by $f\circ T$ on $T^{-1}(E)\cap\B$ and by $1$ on $\B\setminus T^{-1}(\B)$. By Lemma \ref{nearextf},
\begin{align*} 
\|\widehat{1_\B}\|_{\overline{q}}^{\overline{q}}-c_{\overline{q}}\|f'-1\|_{L^1(\B)}&\ge \|\widehat{f'1_{\B}}\|_{\overline{q}}^{\overline{q}}\\
&\ge |\B|^{\overline{q}/\overline{p}}{\bf{B}}_{\overline{q},d}^{\overline{q}}-o_{q-\overline{q}}(1)-o_{\delta}(1)\\
&=\|\widehat{1_{\B}}\|_{\overline{q}}^{\overline{q}}-o_{q-\overline{q}}(1)-o_{\delta}(1).
\end{align*} 
Thus $\|f\circ T-1\|_{L^1(T^{-1}(E))}\le \|f'-1\|_{L^1(\B)}+\|f\circ T-1\|_{L^1(T^{-1}(E)\setminus\B)}\le o_{q-\overline{q}}(1)+o_{\delta}(1)$.

Define $g':\B\to\R$ by $g\circ T$ on $T^{-1}(E)\cap\B$ and $0$ on $\B\setminus T^{-1}(E)$. Since $\|(f\circ Te^{ig\circ T}1_E\circ T)^{\widehat{\,\,}}\|_{\overline{q}}/|T^{-1}(E)|^{1/\overline{p}}= \|(fe^{ig}1_E)^{\widehat{\,\,}}\|_{\overline{q}}$,  

\begin{align*}
    \|\widehat{e^{ig'}1_{\B}}\|_{\overline{q}}&\ge\|(f\circ Te^{ig\circ T}1_E\circ T)^{\widehat{\,\,\,}}\|_{\overline{q}}-\|f\circ Te^{ig\circ T}1_E\circ T-e^{ig'}1_{\B}\|_{\overline{p}}\\
    &\ge |T^{-1}(E)|^{1/\overline{p}}{\bf{B}}_{\overline{q},d}-o_{q-\overline{q}}(1)-o_{\delta}(1)\\
    &\quad -\|f\circ T-1\|_{L^1(E)}^{1/\overline{p}}-|T^{-1}(E)\Delta\B|^{1/\overline{p}} \\
    &= |\B|^{1/\overline{p}}{\bf{B}}_{\overline{q},d}-o_{q-\overline{q}}(1)-o_{\delta}(1). 
\end{align*}
Then Proposition \ref{nearextg} applies, so there exists a real-valued affine function $L:\R^d\to\R^d$ so that 
\[ \|e^{ig'}-e^{iL}\|_{L^1(\B)}\le o_{q-\overline{q}}(1)+o_{\delta}(1). \]
If for a.e. $x\in T^{-1}(\B)$ we choose a representative of the equivalence class $[g\circ T(x)-L(x)]\in\R/(2\pi)$ with values in some range $[-M,M]$, then 
\[ \|g\circ T-L\|_{L^2(T^{-1}(E))}^2\le M^2|T^{-1}(E)\setminus\B|+M\|e^{ig'}-e^{iL}\|_{L^1(\B)}\le M^2(o_{q-\overline{q}}(1)+o_{\delta}(1)).  \]

We will prove in Propositions \ref{modulus}, \ref{support}, and \ref{freq} that 

\[ \|(f\circ Te^{i(g\circ T-L)}1_E\circ T)^{\widehat{\,\,}}\|_q^q\le \|\widehat{1_{\B}}\|_q^q-c_{q,d}\left[ \|f\circ T-1\|_{L^1(T^{-1}(E))}+\text{dist}_{T^{-1}(E)}(e^{ig\circ T},\mathfrak{L})^2+|T^{-1}(E)\Delta \B|^2 \right] . \]
Since $\|(f\circ Te^{i(g\circ T-L)}1_E\circ T)^{\widehat{\,\,}}\|_{q}/|\B|^{1/p}= \|(fe^{ig}1_E)^{\widehat{\,\,}}\|_{q}$, it follows that 
\begin{align*}  
\|\widehat{fe^{ig}1_E}\|_q^q&\le {\bf{B}}_{q,d}^q-|\B|^{-1/p}c_{q,d}\left[ \|f\circ T-1\|_{L^1(T^{-1}(E))}+\text{dist}_{T^{-1}(E)}(e^{ig\circ T},\mathfrak{L})^2+|T^{-1}(E)\Delta \B|^2 \right] \\
&\le  {\bf{B}}_{q,d}^q-|\B|^{1/q}c_{q,d}\left[ \|f-1\|_{L^1(E)}+\text{dist}_E(e^{ig},\mathfrak{L})^2+\text{dist}(E,\mathfrak{E})^2 \right] 
\end{align*} 
where we used that $|\B|=|T^{-1}(E)|=|\det T^{-1}||E|=|\det T^{-1}|$. 

%In summary, the remainder of our analysis will treat $fe^{ig}1_E$ satisfying $|E\Delta\B|\le \delta_0$, $\|g\|_{L^2(E)}\le \delta_0$, and $\|f-1\|_{L^1(E)}\le \delta_0$. 

%%%%%%%%%%%%%%%%%%%%%%%%%%%%

%We will prove that for such functions, if $|q-\overline{q}|<\rho$, then 

%\begin{align*} \|\widehat{fe^{ig}1_E}\|_q^q\le {\bf{B}}_{q,d}^q-c_{q,d}\left[ \|f-1\|_{L^1(E)}+\text{dist}_E(e^{ig},\mathfrak{L})^2+\text{dist}(E,\mathfrak{E})^2 \right] .    \end{align*}
%\[        \inf_{\substack{L\text{ affine}\\\R-\text{valued}}}  \|e^{i(g-L)}-1\|_{L^2(\B)}^2      \]

%BE SLIGHTLY MORE CAREFUL HERE!

%%%%%%%%%%%%%%%%%%%%%%%%%%%%%%%%%%%%%%%%%%%%

\begin{section}      {\label{taylorexpansion}A Taylor expansion representation of $\|\widehat{fe^{ig}1_E}\|_q^q$.} 

Assuming that $|f|\le 1_E$ is close to $1_{\B}$ in the appropriate sense, we can find a good representation of $\|\widehat{f}\|_q^q$ using a Taylor expansion about $\|\widehat{1_{\B}}\|_q^q$. This is analogous to Lemma 3.4 in \cite{c2}. The functions in the following definition arise in the Taylor expansion. 
\begin{definition} For $d\ge 1$ and $q\in (3,\infty)$, we define the functions $K_q$ and $L_q$ on $\R^d$ by 
\begin{align}
    \widehat{K_q}=|\widehat{1_{\B}}|^{q-2}\widehat{1_{\B}} \label{Kq}\\
\widehat{L_q}=|\widehat{1_{\B}}|^{q-2} .\label{Lq}
\end{align}
\end{definition}
The basic properties of $K_q$ and $L_q$ are discussed in \textsection\ref{KqLq} below. For any function $f$ on $\R^d$, we let $\tilde{f}$ be the function $\tilde{f}(x)=f(-x)$. 

\begin{lemma}\label{Taylorgen}  Let $d\ge1$ and $q\in (3,\infty)$ with conjugate exponent $q'$. Let $E\subset{\R^d}$ and $|f|\le 1_E$. Set $h=f1_E-1_{\B}$. For sufficiently small $\|h\|_{q'}$, 
\begin{align*}
\|\widehat{f}\|_q^q    &= \|\widehat{1_{\B}}\|_q^q+q\langle K_q, \textup{Re}\, h\rangle -\frac{1}{4}q(q-2)\langle \textup{Im}\, h*\textup{Im}\, h,L_q\rangle+\frac{1}{4}q^2\langle \textup{Im}\, h,\textup{Im}\, h*L_q\rangle\\
    &\quad +\frac{1}{4}q(q-2)\langle \textup{Re}\, h*\textup{Re}\, h,L_q\rangle +\frac{1}{4}q^2\langle \textup{Re}\, h*\textup{Re}\,\tilde{h},L_q\rangle+O(\|h\|_{q'}^3) .
\end{align*}
\end{lemma}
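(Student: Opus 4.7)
The plan is to write $\widehat{f} = F + \widehat{h}$ with $F := \widehat{1_{\B}}$, Taylor-expand the scalar $w \mapsto |F(\xi) + w|^q$ pointwise around $w = 0$, integrate in $\xi$, and translate each term back to physical space via Plancherel. Since $1_{\B}$ is real and even, $F$ is real and even. Writing $|F+w|^q = (F^2 + 2F\,\Re w + |w|^2)^{q/2}$ and computing first and second partials at $w = 0$ in the variables $(\Re w, \Im w)$ produces
\[
|F+w|^q = |F|^q + qF|F|^{q-2}\Re w + \tfrac{q(q-1)}{2}|F|^{q-2}(\Re w)^2 + \tfrac{q}{2}|F|^{q-2}(\Im w)^2 + R_\xi(w),
\]
with the uniform remainder bound $|R_\xi(w)| \le C_q\,\max(|F(\xi)|, |w|)^{q-3}|w|^3$, valid even at zeros of $F$ because the hypothesis $q > 3$ makes $z \mapsto |z|^q$ of class $C^3$ on all of $\mathbb{C}$.

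Substituting $w = \widehat{h}(\xi)$ and integrating, the zeroth-order term is $\|\widehat{1_{\B}}\|_q^q$ and the linear term equals $q\int \widehat{K_q}\,\Re \widehat{h} = q\ip{K_q}{\Re h}$ by Plancherel, since $K_q$ is real and even. For the quadratic part I would decompose $h = u + iv$ with real $u, v$ and exploit the Hermitian symmetries $\overline{\widehat{u}(\xi)} = \widehat{u}(-\xi)$ and $\overline{\widehat{v}(\xi)} = \widehat{v}(-\xi)$: $\Re \widehat{u}$ and $\Re \widehat{v}$ are even while $\Im \widehat{u}$ and $\Im \widehat{v}$ are odd. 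Combined with the evenness of $\widehat{L_q} = |F|^{q-2}$, the cross terms in
\[
(\Re \widehat{h})^2 = (\Re \widehat{u} - \Im \widehat{v})^2, \qquad (\Im \widehat{h})^2 = (\Im \widehat{u} + \Re \widehat{v})^2
\]
integrate to zero by parity. The Plancherel identities $\int \widehat{L_q}\,\widehat{u}^2 = \ip{L_q}{u*u}$ and $\int \widehat{L_q}\,|\widehat{u}|^2 = \ip{L_q}{u*\tilde{u}}$ (and their $v$ analogues) then yield
\[
\int \widehat{L_q}(\Re \widehat{u})^2 = \tfrac{1}{2}\bigl[\ip{L_q}{u*u}+\ip{L_q}{u*\tilde u}\bigr], \quad \int \widehat{L_q}(\Im \widehat{u})^2 = \tfrac{1}{2}\bigl[-\ip{L_q}{u*u}+\ip{L_q}{u*\tilde u}\bigr],
\]
and assembling $\tfrac{q(q-1)}{2}\int \widehat{L_q}(\Re \widehat{h})^2 + \tfrac{q}{2}\int \widehat{L_q}(\Im \widehat{h})^2$ from these pieces collects the coefficients of $\ip{L_q}{u*u}$, $\ip{L_q}{u*\tilde u}$, $\ip{L_q}{v*v}$, $\ip{L_q}{v*\tilde v}$ into $\tfrac{q(q-2)}{4}$, $\tfrac{q^2}{4}$, $-\tfrac{q(q-2)}{4}$, $\tfrac{q^2}{4}$, matching the statement.

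The remainder is controlled by splitting frequency space. On $\{|\widehat{h}|\le |F|\}$, $|R_\xi| \lesssim |F|^{q-3}|\widehat{h}|^3$, and H\"older at exponents $(q/(q-3), q/3)$ gives $\int|F|^{q-3}|\widehat{h}|^3 \le \|F\|_q^{q-3}\|\widehat{h}\|_q^3 \lesssim \|h\|_{q'}^3$ via Hausdorff--Young. On $\{|\widehat{h}|>|F|\}$, $|R_\xi| \lesssim |\widehat{h}|^q$ and $\|\widehat{h}\|_q^q \lesssim \|h\|_{q'}^q \le \|h\|_{q'}^3$ once $\|h\|_{q'}$ is small, since $q > 3$. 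The main obstacle is the quadratic Plancherel bookkeeping: correctly distributing $(\Re \widehat{h})^2$ and $(\Im \widehat{h})^2$ into the four pairings $u*u$, $u*\tilde u$, $v*v$, $v*\tilde v$ demands systematic use of Hermitian symmetry, evenness of $L_q$, and the minus sign in $\int \widehat{L_q}(\Im \widehat{u})^2 = -\tfrac{1}{2}\ip{L_q}{u*u}+\tfrac{1}{2}\ip{L_q}{u*\tilde u}$, which is ultimately responsible for the minus sign on the $v*v$ term in the statement.
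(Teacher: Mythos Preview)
Your proposal is correct and follows essentially the same route as the paper: pointwise Taylor expansion of $|F+w|^q$ with $F=\widehat{1_{\B}}$, integration in $\xi$, Plancherel to convert the linear and quadratic terms to physical space, and the same H\"older/Hausdorff--Young splitting for the remainder. The only cosmetic difference is in the quadratic bookkeeping: the paper keeps complex notation, writing $(\widehat{h}\pm\overline{\widehat{h}})^2$ and using $\overline{\widehat{h}}=\widehat{\overline{\tilde h}}$ before separating into real and imaginary parts at the end, whereas you decompose $h=u+iv$ at the outset and kill the $u$--$v$ cross terms by parity; the two computations are equivalent.
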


If $q$ belongs to a compact subset of $(3,\infty)$, the constant implicit in the notation $O(\cdot)$ may be taken to be independent of $q$.

\begin{proof}
Using the Taylor expansion 
\[  |1+t|^q=1+q\Re\,t+\frac{1}{2}q(q-1)(\Re\,t)^2+\frac{1}{2}q(\Im\,t)^2+O(|t|^3+|t|^q) \]
valid for $q\in (3,\infty)$ and the fact that $\widehat{1_{\B}}$ is real-valued, we have
\begin{align}
    |\widehat{1_{\B}}+\widehat{h}|^q&=|\widehat{1_{\B}}|^q+q(\Re\,\widehat{h})\widehat{1_{\B}}|\widehat{1_{\B}}|^{q-2}\nonumber +\frac{1}{2}q(q-1)(\Re\,\widehat{h})^2|\widehat{1_{\B}}|^{q-2} \\
    &+\frac{1}{2}q(\Im\,\widehat{h})^2|\widehat{1_{\B}}|^{q-2}+O(|\widehat{h}|^3|\widehat{1_{\B}}|^{q-3})+O(|\widehat{h}|^q). \nonumber  
\end{align}
Next we integrate over $\R^d$ to obtain 
\begin{align*}
    \|\widehat{f}\|_q^q &= \|\widehat{1_{\B}}\|_q^q+q\langle K_q, \Re\, h\rangle +\frac{1}{2}q(q-1)\langle(\Re\,\widehat{h})^2,\widehat{L_q}\rangle\\
    &\quad+\frac{1}{2}q\langle(\Im\,\widehat{h})^2,\widehat{L_q}\rangle+O(\|h\|_{q'}^3+\|h\|_{q'}^q) \\
    &= \|\widehat{1_{\B}}\|_q^q+q\langle K_q, \Re\, h\rangle +\frac{1}{8}q(q-1)\langle(\widehat{h}+\overline{\widehat{h}})^2,\widehat{L_q}\rangle\\
    &\quad-\frac{1}{8}q\langle(\widehat{h}-\overline{\widehat{h}})^2,\widehat{L_q}\rangle+O(\|h\|_{q'}^3).
\end{align*} 
Using Plancherel's theorem, recalling that the $\tilde{\cdot}$ notation denotes the reflected function, using that $L_q$ satisfies $\tilde{L_q}=L_q$, and exploiting the equality $\langle f_1*f_2,f_3\rangle=\langle f_1,\tilde{f}_2*f_4\rangle$ for real-valued functions $f_i$, we further compute 
\begin{align*} 
\|\widehat{f}\|_q^q    &= \|\widehat{1_{\B}}\|_q^q+q\langle K_q, \Re\, h\rangle +\frac{1}{8}q(q-1)\langle(h+\overline{\tilde{h}})*(h+\overline{\tilde{h}}),L_q\rangle\\
    &\quad-\frac{1}{8}q\langle(h-\overline{\tilde{h}})*(h-\overline{\tilde{h}}),L_q\rangle+O(\|h\|_{q'}^3) \\
    &= \|\widehat{1_{\B}}\|_q^q+q\langle K_q, \Re\, h\rangle +\frac{1}{8}q(q-1)\langle(h*h+2h*\overline{\tilde{h}}+\overline{\tilde{h}}*\overline{\tilde{h}}),L_q\rangle \\
    &\quad -\frac{1}{8}q\langle(h*h-2h*\overline{\tilde{h}}+\overline{\tilde{h}}*\overline{\tilde{h}}),L_q\rangle+O(\|h\|_{q'}^3) \\
    &= \|\widehat{1_{\B}}\|_q^q+q\langle K_q, \Re\, h\rangle +\frac{1}{8}q(q-2)\langle(h*h+\overline{\tilde{h}}*\overline{\tilde{h}}),L_q\rangle \\
    &\quad +\frac{1}{4}q^2\langle h*\overline{\tilde{h}},L_q\rangle+O(\|h\|_{q'}^3) \\
    &= \|\widehat{1_{\B}}\|_q^q+q\langle K_q, \Re\, h\rangle +\frac{1}{8}q(q-2)\langle 2\Re\, h*\Re\, h-2\Im\, h*\Im\, h,L_q\rangle \\
    &\quad +\frac{1}{4}q^2\langle \Re\, h*\Re\,\overline{\tilde{h}}-\Im\, h*\Im\,\overline{\tilde{h}},L_q\rangle+O(\|h\|_{q'}^3) \\
    &= \|\widehat{1_{\B}}\|_q^q+q\langle K_q, \Re\, h\rangle -\frac{1}{4}q(q-2)\langle \Im\, h*\Im\, h,L_q\rangle+\frac{1}{4}q^2\langle \Im\, h,\Im\, h*L_q\rangle\\
    &\quad +\frac{1}{4}q(q-2)\langle \Re\, h*\Re\, h,L_q\rangle +\frac{1}{4}q^2\langle \Re\, h*\Re\,\tilde{h},L_q\rangle+O(\|h\|_{q'}^3) .
\end{align*}

\end{proof}

\begin{subsection}{\label{KqLq}The functions $K_q$ and $L_q$.}
Let $K_q$ and $L_q$ be the functions defined in (\ref{Kq}) and (\ref{Lq}).  We state the facts proved in in \cite{c2} about $K_q$ and $L_q$ that we will need for our analysis of the Taylor expansion. See \textsection 3.3 and \textsection 3.4 from \cite{c2} for detailed explanations.

As is well known, $\widehat{1_{\B}}$ is a radially symmetric real-valued real analytic function which satisfies 
\begin{align} |\widehat{1_{\B}}(\xi)|+|\nabla \widehat{1_{\B}}(\xi)|\le C_d(1+|\xi|)^{-(d+1)/2} .\label{cpct} \end{align}

The following are consequences of (\ref{cpct}):

\begin{lemma} Let $d\ge 1$ and $q\in(3,\infty)$. The functions $K_q,L_q$ are real-valued, radially symmetric, bounded and H\"{o}lder continuous of some positive order. Moreover, $K_q(x)\to 0$ as $|x|\to\infty$ and likewise for $L_q(x)$. The function $K_q$ is continuously differentiable, and $x\cdot \nabla K_q$ is likewise real-valued, radially symmetric, and H\"{o}lder continuous of some positive order. These conclusions hold uniformly for $q$ in any compact subset of $(3,\infty)$. 

\end{lemma}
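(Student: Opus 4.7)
The plan is to read off all the stated properties of $K_q$ and $L_q$ from their Fourier-side definitions (\ref{Kq}) and (\ref{Lq}) via Fourier inversion, combined with the decay estimate (\ref{cpct}). Since $\widehat{1_{\B}}$ is real-valued and radial, both $|\widehat{1_{\B}}|^{q-2}\widehat{1_{\B}}$ and $|\widehat{1_{\B}}|^{q-2}$ are real-valued and radial, and these two properties transfer under inverse Fourier transform to $K_q$ and $L_q$. The bound (\ref{cpct}) yields
\[
|\widehat{K_q}(\xi)|\le C_d^{q-1}(1+|\xi|)^{-(q-1)(d+1)/2}, \qquad |\widehat{L_q}(\xi)|\le C_d^{q-2}(1+|\xi|)^{-(q-2)(d+1)/2}.
\]
For every $d\ge 1$ and every $q>3$, both exponents strictly exceed $d$, so $\widehat{K_q},\widehat{L_q}\in L^1(\R^d)$. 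Fourier inversion then shows that $K_q$ and $L_q$ are bounded and continuous, and the Riemann-Lebesgue lemma gives the vanishing at infinity.

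For H\"{o}lder continuity I would use the standard modulus-of-continuity estimate
\[
|K_q(x+y)-K_q(x)|\le \int_{\R^d}|e^{2\pi i\xi\cdot y}-1|\,|\widehat{K_q}(\xi)|\,d\xi,
\]
split the integral at $|\xi|=|y|^{-1}$, and apply $|e^{it}-1|\le \min(2,|t|)$. The decay estimate bounds the low-frequency part by $2\pi|y|\int_{|\xi|\le|y|^{-1}}|\xi|(1+|\xi|)^{-(q-1)(d+1)/2}d\xi$ and the high-frequency part by $2\int_{|\xi|>|y|^{-1}}(1+|\xi|)^{-(q-1)(d+1)/2}d\xi$, from which one reads off H\"{o}lder continuity of any positive order $\alpha<\min\bigl(1,(q-1)(d+1)/2-d\bigr)$; the same recipe handles $L_q$ with $(q-1)$ replaced by $(q-2)$. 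For continuous differentiability of $K_q$, the weighted decay $|\xi|(1+|\xi|)^{-(q-1)(d+1)/2}$ is integrable precisely when $(q-1)(d+1)/2>d+1$, i.e., when $q>3$, so $\nabla K_q=\F^{-1}(2\pi i\xi\,\widehat{K_q})$ is continuous.

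For $x\cdot\nabla K_q$, radial symmetry is immediate from writing $K_q(x)=k(|x|)$: then $\nabla K_q(x)=k'(|x|)x/|x|$ and so $x\cdot\nabla K_q(x)=|x|k'(|x|)$ is real-valued and radial. For H\"{o}lder continuity I would work on the Fourier side via
\[
\widehat{x\cdot\nabla K_q}(\xi)=-d\,\widehat{K_q}(\xi)-\xi\cdot\nabla\widehat{K_q}(\xi),
\]
reducing matters to bounding $\xi\cdot\nabla\widehat{K_q}$. Here the key observation is that $F(t):=|t|^{q-2}t$ is a $C^1$ function on $\R$ with $F'(t)=(q-1)|t|^{q-2}$ continuous and vanishing at $t=0$ (using $q-2>0$), so by the chain rule $\nabla\widehat{K_q}(\xi)=(q-1)|\widehat{1_{\B}}(\xi)|^{q-2}\nabla\widehat{1_{\B}}(\xi)$ holds globally on $\R^d$, including at zeros of $\widehat{1_{\B}}$. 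Substituting (\ref{cpct}) gives $|\xi\cdot\nabla\widehat{K_q}(\xi)|\le C|\xi|(1+|\xi|)^{-(q-1)(d+1)/2}$, and a second application of the split-integral argument yields H\"{o}lder continuity of $x\cdot\nabla K_q$ of some positive order.

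The main technical point is justifying the chain-rule formula for $\nabla\widehat{K_q}$ at the zeros of $\widehat{1_{\B}}$ despite the non-smoothness of $t\mapsto|t|^{q-2}$ at $t=0$; this is exactly what the $C^1$ statement for $F(t)=|t|^{q-2}t$ buys us, and it requires $q>2$. Uniformity for $q$ in a compact subset of $(3,\infty)$ is then automatic: all exponents appearing in the decay and integrability thresholds depend continuously on $q$, the constant $C_d$ in (\ref{cpct}) is $q$-independent, and the H\"{o}lder exponents can be chosen continuously just below their respective thresholds.
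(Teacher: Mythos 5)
Your overall strategy (Fourier inversion from the definitions \eqref{Kq}--\eqref{Lq}, the decay bound \eqref{cpct}, the split-at-$|\xi|=|y|^{-1}$ modulus-of-continuity estimate, and the chain rule for the $C^1$ function $F(t)=|t|^{q-2}t$ to handle $\nabla\widehat{K_q}$ and hence $x\cdot\nabla K_q$) is the standard one and is surely what the cited sections of \cite{c2} carry out; the paper itself offers no proof beyond that citation, so your write-up is more self-contained. The parts concerning $K_q$ and $x\cdot\nabla K_q$ are correct: the relevant exponent $(q-1)(d+1)/2$ exceeds $d+1$ precisely when $q>3$, which is exactly what is needed for $\widehat{K_q}$, $|\xi|\widehat{K_q}$, and $\widehat{x\cdot\nabla K_q}$ to be integrable with room to spare, uniformly on compact subsets of $(3,\infty)$.

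There is, however, a genuine error in the step for $L_q$. You assert that ``for every $d\ge1$ and every $q>3$, both exponents strictly exceed $d$,'' but the exponent for $\widehat{L_q}$ is $(q-2)(d+1)/2$, and $(q-2)(d+1)/2>d$ is equivalent to $q>2+\tfrac{2d}{d+1}$. For $d=1$ this threshold is $3$ and your claim holds, but for $d\ge2$ it is strictly larger than $3$ (e.g.\ $10/3$ for $d=2$), so for $q$ slightly above $3$ your deduction that $\widehat{L_q}\in L^1(\R^d)$ fails. This is not a repairable slip within your framework: since $\widehat{L_q}=|\widehat{1_{\B}}|^{q-2}\ge0$ and the decay rate $(1+|\xi|)^{-(d+1)/2}$ of $\widehat{1_{\B}}$ is sharp along the peaks of the Bessel oscillation, one has $L_q(0)=\int\widehat{L_q}=+\infty$ whenever $(q-2)(d+1)/2\le d$, so $L_q$ is genuinely unbounded there. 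The boundedness and H\"older continuity of $L_q$ therefore require $q>2+\tfrac{2d}{d+1}$ (this is precisely the role of the threshold $q_d$ in \cite{c2}); in the applications of this paper $q$ is near an even integer $\ge4$, so nothing downstream is affected, but your proof as written establishes the $L_q$ assertions only on the smaller range, and you should flag the restriction rather than claim the inequality for all $q>3$.
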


\begin{lemma}\label{Kqlem2}For each  $d\ge 1$, $K_q$, $L_q$, and $x\cdot \nabla_xK_q$ depend continuously on $q\in(3,\infty)$. This holds in the sense that for each compact subset $\Lambda\subset(3,\infty)$, the mappings $q\mapsto K_q$ and $q\mapsto L_q$ are continuous from $\Lambda$ to the space of continuous functions on $\R^d$ that tend to zero at infinity. Moreover, there exists $\rho>0$ such that this mapping from $\Lambda$ to the space of bounded H\"{o}lder continuous functions of order $\rho$ on any bounded subset of $\R^d$ is continuous. The two conclusions also hold for $q\mapsto x\cdot \nabla_x K_q$. 
\end{lemma}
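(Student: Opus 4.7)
The plan is to represent each of $K_q$, $L_q$, and $x \cdot \nabla_x K_q$ as the inverse Fourier transform of an explicit function of $\xi$ whose decay is controlled by (\ref{cpct}), and then to derive every claimed continuity and regularity property by dominated convergence applied to these representations. From (\ref{cpct}) one gets the pointwise bounds $|\widehat{K_q}(\xi)|\lesssim (1+|\xi|)^{-(q-1)(d+1)/2}$ and $|\widehat{L_q}(\xi)|\lesssim (1+|\xi|)^{-(q-2)(d+1)/2}$, both integrable on $\R^d$ uniformly for $q$ in a compact subset $\Lambda\subset(3,\infty)$. For the derivative piece, I would first verify the clean identity $\nabla_\xi\widehat{K_q}=(q-1)|\widehat{1_{\B}}|^{q-2}\nabla\widehat{1_{\B}}$, which is legitimate for $q>3$ since $|\widehat{1_{\B}}|^{q-2}=(\widehat{1_{\B}}^2)^{(q-2)/2}$ is then $C^1$ (even at zeros of $\widehat{1_{\B}}$), and then use the distributional identity $\mathcal{F}(x\cdot\nabla_x K_q)(\xi)=-d\widehat{K_q}(\xi)-\xi\cdot\nabla_\xi\widehat{K_q}(\xi)$ coming from $\nabla_x\cdot(xK_q)=dK_q+x\cdot\nabla_x K_q$; this expresses $x\cdot\nabla_x K_q$ as the inverse Fourier transform of a function bounded by a multiple of $(1+|\xi|)^{1-(q-1)(d+1)/2}$, again integrable for $q>3$.

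Continuity of $q\mapsto K_q,L_q,x\cdot\nabla_x K_q$ from $\Lambda$ into $C_0(\R^d)$ then follows immediately by dominated convergence: as $q_n\to q_0\in\Lambda$, the Fourier-side differences $|\widehat{1_{\B}}|^{q_n-2}-|\widehat{1_{\B}}|^{q_0-2}$ tend to $0$ pointwise and are dominated by $2(1+|\xi|)^{-(a-2)(d+1)/2}$ where $a:=\inf\Lambda>3$. Taking inverse Fourier transforms and the sup in $x$ gives uniform convergence on $\R^d$, with vanishing at infinity inherited from Riemann--Lebesgue. For the H\"older statements I would use the elementary inequality $|e^{2\pi i x\cdot\xi}-e^{2\pi i y\cdot\xi}|\le C_\rho|x-y|^\rho|\xi|^\rho$ valid for $\rho\in(0,1]$, which reduces both the uniform H\"older bound of order $\rho$ and convergence in the H\"older norm to the integrability of $|\xi|^\rho$ against each Fourier-side integrand. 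This integrability holds provided $\rho$ is chosen strictly smaller than $(q-1)(d+1)/2-d$, $(q-2)(d+1)/2-d$, and $(q-3)(d+1)/2$ for $K_q$, $L_q$, and $x\cdot\nabla_x K_q$ respectively; each of these thresholds is bounded below by a positive constant uniformly in $q\in\Lambda$ (possibly after shrinking $\Lambda$ away from the endpoint $3$), so a single $\rho=\rho(\Lambda,d)$ works. A final dominated-convergence argument, now with the extra weight $|\xi|^\rho$ in the integrand, delivers the H\"older-norm continuity in $q$.

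The main technical point I would need to handle carefully is the derivation of $\nabla_\xi\widehat{K_q}$ at the zeros of $\widehat{1_{\B}}$, where naively the product rule for $|\widehat{1_{\B}}|^{q-2}\cdot\widehat{1_{\B}}$ produces a term with an apparently singular factor $\operatorname{sgn}(\widehat{1_{\B}})$. The cancellation that leaves the single continuous expression $(q-1)|\widehat{1_{\B}}|^{q-2}\nabla\widehat{1_{\B}}$ is the step that genuinely uses $q>3$ rather than $q>2$, and it is what lets the Fourier integral representation of $x\cdot\nabla_x K_q$ be a bounded continuous function whose decay is controlled by (\ref{cpct}). Everything else reduces to the three dominated-convergence arguments outlined above, with constants uniform in $q\in\Lambda$.
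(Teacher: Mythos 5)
The paper does not actually prove this lemma; it defers entirely to \S3.3--3.4 of \cite{c2}. Your route --- Fourier inversion of explicit Fourier-side formulas, the decay bound (\ref{cpct}), and three dominated-convergence arguments --- is the natural one and is essentially what that reference does. Your treatment of $K_q$ and of $x\cdot\nabla_x K_q$ is correct: $|\widehat{K_q}|\lesssim(1+|\xi|)^{-(q-1)(d+1)/2}$ is integrable precisely when $q>\frac{3d+1}{d+1}$, which is $<3$, and the extra factor of $|\xi|$ in $\xi\cdot\nabla_\xi\widehat{K_q}$ raises the threshold to exactly $q>3$, with Hölder exponent up to $(q-3)(d+1)/2$; your identities $\nabla_\xi\widehat{K_q}=(q-1)|\widehat{1_{\B}}|^{q-2}\nabla\widehat{1_{\B}}$ and $\mathcal{F}(x\cdot\nabla_xK_q)=-d\widehat{K_q}-\xi\cdot\nabla_\xi\widehat{K_q}$ both check out. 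One small mis-attribution: the cancellation of the $\operatorname{sgn}(\widehat{1_{\B}})$ factor in $\frac{d}{dt}(|t|^{q-2}t)=(q-1)|t|^{q-2}$ already holds for $q>2$; what genuinely forces $q>3$ is the integrability of $|\xi|\,|\widehat{1_{\B}}(\xi)|^{q-1}$.

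The genuine gap is in the $L_q$ part. You assert that $|\widehat{L_q}(\xi)|\lesssim(1+|\xi|)^{-(q-2)(d+1)/2}$ is integrable uniformly for $q$ in a compact $\Lambda\subset(3,\infty)$. Integrability requires $(q-2)(d+1)/2>d$, i.e.\ $q>2+\frac{2d}{d+1}$; this threshold equals $3$ only for $d=1$ and exceeds $3$ for every $d\ge2$ (tending to $4$ as $d\to\infty$). So for $d\ge 2$ and $\Lambda$ containing points of $(3,\,2+\frac{2d}{d+1})$, the Fourier integral defining $L_q$ is not absolutely convergent, and since $\widehat{L_q}\ge0$ the function $L_q$ cannot then even be bounded near the origin --- ``shrinking $\Lambda$ away from the endpoint $3$'' does not repair this unless you shrink past $2+\frac{2d}{d+1}$. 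To be fair, this defect is inherited from the lemma's stated range $(3,\infty)$ rather than from your method: in every application in the paper $q$ lies within $\rho$ of an even integer $\ge4$, where $(q-2)(d+1)/2>d$ holds with room to spare and your argument goes through verbatim. You should either restrict the $L_q$ conclusions to $q>2+\frac{2d}{d+1}$ (or to neighborhoods of even integers $\ge4$), or supply a different argument for $L_q$ in the low range; as written, the dominated-convergence step for $L_q$ has no dominating function.
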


The following lemma is an immediate consequence of the boundedness of $L_q$. 
\begin{lemma}Let $d\ge 1$ and $q\in(3,\infty)$. Let $f1_E$ be a Lebesgue-measurable function with $|E|\in\R^+$ and $|f|\le 1$. Let $h_1,h2\in L^1(\R^d)$.  Then 
\[ \langle h_1*L_q,h_2\rangle = O(\|h_1\|_1\|h_2\|_1). \]

\end{lemma}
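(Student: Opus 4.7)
The plan is to invoke Young's convolution inequality together with the uniform $L^\infty$ bound on $L_q$ supplied by the preceding Lemma. Since $L_q$ has been shown to be a bounded, radially symmetric, real-valued function on $\R^d$ (continuous and vanishing at infinity, hence in particular in $L^\infty$), we may set $M_q := \|L_q\|_{L^\infty(\R^d)} < \infty$, with $M_q$ uniformly bounded on compact subsets of $q\in(3,\infty)$ by Lemma \ref{Kqlem2}.

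First I would apply Young's inequality in the endpoint configuration $L^1 \ast L^\infty \hookrightarrow L^\infty$ to write
\[
\|h_1 \ast L_q\|_{L^\infty(\R^d)} \;\le\; \|h_1\|_{L^1(\R^d)}\,\|L_q\|_{L^\infty(\R^d)} \;=\; M_q\|h_1\|_1.
\]
Then pairing with $h_2$ via the $L^\infty$--$L^1$ duality gives
\[
|\langle h_1 \ast L_q,\,h_2\rangle| \;\le\; \|h_1\ast L_q\|_\infty\,\|h_2\|_1 \;\le\; M_q\,\|h_1\|_1\|h_2\|_1,
\]
which is precisely the asserted bound with implicit constant $M_q$.

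There is no real obstacle here: the content of the lemma is exactly the above two-line Young/Hölder estimate, and all non-trivial input (namely $L_q \in L^\infty$ with a $q$-uniform bound on compacta) has been imported from the earlier discussion of $K_q$ and $L_q$. I note that the hypotheses on $f1_E$ (Lebesgue measurability, $|f|\le1$, $|E|\in \R^+$) are not actually used in the statement or its proof and appear to be vestigial from neighboring lemmas in the paper.
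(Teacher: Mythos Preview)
Your proof is correct and matches the paper's approach exactly: the paper simply states that the lemma is ``an immediate consequence of the boundedness of $L_q$'' without writing out the two-line Young/H\"older argument you supply. Your observation that the hypotheses on $f1_E$ are vestigial is also accurate.
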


\begin{lemma}\label{Kqprop} For each $d\ge 1$ and each even integer $m\ge 4$ there exists $\eta=\eta(d,m)>0$ such that whenever $|q-m|<\eta$, there exists $c>0$ such that whenever $|y|\le 1\le |x|\le 2$, 
\begin{align}
K_q(y)\ge K_q(x)+c||x|-|y|| \label{a1}.
\end{align}
Also, $\inf_{x\in 2\B}K_q(x)>0$ and 
\begin{align}
\min_{|x|\le 1-\delta}K_q(x)>\max_{|x|\ge  1+\delta}K_q(x)\quad\text{for all }\delta>0. \label{a2}
\end{align}

\end{lemma}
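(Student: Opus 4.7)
The strategy is to verify all three conclusions at $q=m$ itself, where $K_m$ admits an explicit convolution representation, and then extend to nearby $q$ using the continuity of $q\mapsto K_q$ and $q\mapsto x\cdot \nabla K_q$ provided by Lemma~\ref{Kqlem2}. Since $m$ is even, $|\widehat{1_\B}|^{m-2}=\widehat{1_\B}^{m-2}$, hence $\widehat{K_m}=\widehat{1_\B}^{m-1}$, so that $K_m=1_\B^{*(m-1)}$ is the $(m-1)$-fold convolution of the indicator of the unit ball. This is a non-negative, radially symmetric, continuous function supported in $(m-1)\B$, and the preceding lemma guarantees $K_m\in C^1$. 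Writing $K_m=1_\B *G$ with $G:=1_\B^{*(m-2)}$ (which is continuous for $m\ge 4$), a standard layer-cake argument applied to one factor reduces strict radial monotonicity of $K_m$ on the interior of its support to the elementary fact that the volume of intersection of two unit balls is a strictly decreasing function of the distance between their centers.

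\paragraph{Quantitative derivative bound for $\phi_m$.} Let $\phi_m(r):=K_m(re_1)$. I would next upgrade strict monotonicity of $\phi_m$ to a quantitative lower bound on $|\phi_m'(r)|$ on any compact subinterval of $(0,m-1)$. Writing $K_m(x)=\int_{x-\B}G(u)\,du$ and differentiating this moving-domain integral (equivalently, integrating by parts against $\nabla 1_\B$), one obtains, at $x=re_1$, after pairing $n\in\partial\B$ with $-n$:
\begin{align*}
\phi_m'(r) = \int_{\partial\B\cap\{n_1>0\}} \bigl[G(|re_1+n|)-G(|re_1-n|)\bigr]\,n_1\,dS(n).
\end{align*}
For $n_1>0$ one has $|re_1+n|>|re_1-n|$, so strict radial decrease of $G$ (together with the vanishing of $G$ outside $(m-2)\B$) forces the bracket to be $\le 0$ pointwise and strictly negative on a positive-measure subset of $\partial\B\cap\{n_1>0\}$ for every $r\in(0,m-1)$. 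Since $m\ge 4$ we have $[1,2]\subset(0,m-1)$, so continuity of $\phi_m'$ yields $c_0:=\min_{r\in[1,2]}|\phi_m'(r)|>0$, and in particular $-\phi_m'(1)>0$.

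\paragraph{Conclusions at $q=m$ and perturbation to nearby $q$.} The bound $\inf_{2\B}K_m>0$ holds because $m-1\ge 3>2$, so $2\B$ lies in the interior of $(m-1)\B$ where $K_m$ is continuous and strictly positive. The gap \eqref{a2} is immediate from strict monotonicity of $\phi_m$ and the vanishing of $\phi_m$ past $m-1$. For \eqref{a1}, decompose
\begin{align*}
K_m(y)-K_m(x) = \bigl[\phi_m(|y|)-\phi_m(1)\bigr]+\bigl[\phi_m(1)-\phi_m(|x|)\bigr]
\end{align*}
and use the MVT to bound the second bracket below by $c_0(|x|-1)$; for the first, the continuous function $r\mapsto(\phi_m(r)-\phi_m(1))/(1-r)$ on $[0,1)$ extends continuously to $r=1$ with positive value $-\phi_m'(1)$, so it has a positive minimum $c_1>0$ on $[0,1]$, giving the bound $c_1(1-|y|)$. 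Summing yields $K_m(y)-K_m(x)\ge \min(c_0,c_1)(|x|-|y|)$. To pass to $q$ near $m$: by Lemma~\ref{Kqlem2} both $K_q\to K_m$ and $x\cdot\nabla K_q\to x\cdot\nabla K_m$ uniformly on $3\B$ as $q\to m$, and since $x\cdot\nabla K_q(x)=|x|\phi_q'(|x|)$ for radial $K_q$, this yields uniform convergence $\phi_q'\to \phi_m'$ on $[\tfrac{1}{2},2]$. Each of the three conclusions at $q=m$ is expressed via strict inequalities or strictly positive infima over compact sets and therefore persists under sufficiently small $C^0$ perturbations of $K_q$ and of $\phi_q'$. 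Choosing $\eta(d,m)>0$ small enough produces the lemma, with a constant $c$ that may be taken to be, say, $\tfrac{1}{2}\min(c_0,c_1)$.

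\paragraph{Main obstacle.} The delicate step is the quantitative monotonicity at $q=m$: upgrading strict radial decrease of $\phi_m$ (a soft consequence of rearrangement) to the strictly negative lower bound $|\phi_m'(r)|\ge c_0>0$ on $[1,2]$, ruling out the a priori possibility that $\phi_m'$ vanishes at an interior point (as it would, for instance, at a horizontal inflection). The derivative-under-the-integral computation described above handles this cleanly, and once it is in hand the remaining transfer to nearby $q$ is routine bookkeeping using the continuity assertions of Lemma~\ref{Kqlem2}.
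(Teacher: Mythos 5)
Your proposal is correct and follows the same skeleton as the paper's proof: establish that the radial profile $\phi_m(r)=K_m(re_1)$ has strictly negative derivative on $(0,m-1)$ for the even integer $m$, deduce \eqref{a1} and \eqref{a2} from this, note $\inf_{2\B}K_m>0$ from the convolution representation, and transfer everything to nearby $q$ via the continuity statements of Lemma \ref{Kqlem2}. The difference is that the paper simply cites the proof of Lemma 3.10 of \cite{c2} for the strict negativity of $\phi_m'$, whereas you prove it from scratch via the identity $K_m=1_{\B}*G$ with $G=1_{\B}^{*(m-2)}$ and the surface-integral formula for the derivative of a moving-domain integral; your computation is correct, and you also spell out the two-bracket MVT argument deriving \eqref{a1} that the paper dismisses as ``a direct result.'' Two points you gloss over slightly: (i) for $m\ge 6$ the strict radial decrease of $G=1_{\B}^{*(m-2)}$ on its support, which your bracket argument needs, requires the same induction (or a rearrangement argument) that you only sketch via the two-ball intersection fact; and (ii) conclusion \eqref{a2} is quantified over \emph{all} $\delta>0$ with a gap that shrinks as $\delta\to0$, so ``persists under small $C^0$ perturbation'' is not literally enough --- one must split into $\delta\ge\delta_1$ (handled by a fixed positive gap plus uniform convergence of $K_q$, using that the convergence is uniform on all of $\R^d$ since it takes place in the space of continuous functions vanishing at infinity) and $\delta<\delta_1$ (handled by the uniform bound $\phi_q'\le -c<0$ on a fixed annulus $[1-\delta_1,1+\delta_1]$, which you do have). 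Both are routine repairs using exactly the ingredients you assembled, so I regard the proposal as sound.
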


\begin{proof} From the proof of Lemma 3.10 in \cite{c2}, if $m$ is an even integer greater than 3 and $|y|=1$, then the map $t\mapsto K_m(ty)$ has strictly negative derivative for all $t\in(0,m-1)$. The inequalities (\ref{a1}) and  (\ref{a2}) are a direct result. Since $\inf_{x\in 2\B}K_q(x)=\inf_{x\in2\B}1_{\B}*\cdots*1_{\B}(x)$ is obviously positive when $q$ is even, the same holds for near $q$ by the continuity of $q\mapsto K_q$. 

\end{proof}

\begin{remark}We will use the fact that $\inf_{x\in 2\B}K_q(x)$ is positive for $q$ near even integers extensively throughout the paper.
\end{remark}

\end{subsection}

\begin{subsection}{A more detailed Taylor expansion in terms of the support, frequency, and modulus.}

In order to better understand the effects of specific variations of $1_{\B}$, we consider $fe^{ig}1_E$ where $0\le f\le 1$, $g$ is real valued, and $|E|\in\R^+$.

\begin{lemma} \label{Taylorfg}  Let $d\ge1$ and $q\in (3,\infty)$. Let $E\subset{\R^d}$. Suppose that $|E\Delta\B|$, $\|g\|_{L^2(E)}$, and $\|f-1\|_{L^1(E)}$ are sufficiently small. Then

\begin{align*} 
\|\widehat{fe^{ig}1_E}\|_q^q&=\|\widehat{1_E}\|_q^q+q\langle K_q, f\cos g1_{E\setminus \B}-1_{E\setminus \B}\rangle-\|\widehat{1_{\B}}\|_q^q+\|\widehat{f'e^{ig'}1_{\B}}\|_q^q \\
    &\quad+ O((\|g\|_2^2+\|f-1\|_{L^1(E)})|E\Delta\B|^{1/2})+O(\|g\|_{L^2(E)}^3+\|f-1\|^{2}_{L^1(E)}+|E\Delta\B|^{3/q'}) 
\end{align*} 
where $g'=g$ on $E\cap \B$, $g'=0$ on $\R^d\setminus(E\cap \B)$ and  $f'=f$ on $E\cap \B$, $f'=1$ on $\B\setminus E$, $f'=0$ on $\R^d\setminus\B$.

\end{lemma}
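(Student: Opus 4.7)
The plan is to apply the general Taylor expansion (Lemma \ref{Taylorgen}) three times and combine algebraically. Define the perturbations $h := fe^{ig}1_E - 1_{\B}$, $h_E := 1_E - 1_{\B} = 1_{E\setminus\B} - 1_{\B\setminus E}$, and $h' := f'e^{ig'}1_{\B} - 1_{\B} = (fe^{ig}-1)1_{E\cap\B}$. A direct calculation yields the key algebraic identity
\[ h - h_E - h' = (fe^{ig}-1)1_{E\setminus\B} =: r, \]
a function supported on $E\setminus\B$ with $|r|\le 2$. The pointwise inequality $|fe^{ig}-1|\le|f-1|+|g|$ gives the crucial improved estimate $\|r\|_1 \le \|f-1\|_{L^1(E)} + |E\Delta\B|^{1/2}\|g\|_{L^2(E)}$, reflecting that $f$ is close to $1$ and $g$ close to $0$ globally on $E$ (not only on $\B$).

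Applying Lemma \ref{Taylorgen} to each of $fe^{ig}1_E$, $1_E$, and $f'e^{ig'}1_{\B}$ and forming the combination
\[ \|\widehat{fe^{ig}1_E}\|_q^q - \|\widehat{1_E}\|_q^q - \|\widehat{f'e^{ig'}1_\B}\|_q^q + \|\widehat{1_\B}\|_q^q, \]
the four copies of $\|\widehat{1_\B}\|_q^q$ cancel, and the linear contributions collapse to $q\langle K_q, \Re h - h_E - \Re h'\rangle = q\langle K_q, \Re r\rangle = q\langle K_q, f\cos g\cdot 1_{E\setminus\B} - 1_{E\setminus\B}\rangle$, precisely the main claimed term. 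What remains to control is the quadratic difference $Q(h) - Q(h_E) - Q(h') = Q(r) + 2B(h_E, h') + 2B(h_E, r) + 2B(h', r)$, where $B$ is the symmetric bilinear form associated with $Q$, together with the three cubic Taylor remainders.

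Each bilinear expression $\langle\phi_1 * \phi_2, L_q\rangle$ appearing in $B$ or $Q(r)$ is controlled by $\|\phi_1\|_1\|\phi_2\|_1$ via the uniform boundedness of $L_q$. Combining $\|h_E\|_1\le|E\Delta\B|$, $\|h'\|_1\lesssim\|f-1\|_{L^1(E)}+\|g\|_{L^2(E)}$ (from $|h'|\le|f-1|+|g|$ on $E\cap\B$ and bounded $|E|$), and the improved bound on $\|r\|_1$, each cross term becomes a product that, by Young's inequality applied at appropriately chosen exponents, can be distributed into the two stated error classes $(\|g\|_2^2+\|f-1\|_{L^1(E)})|E\Delta\B|^{1/2}$ and $\|g\|_{L^2(E)}^3+\|f-1\|^2_{L^1(E)}+|E\Delta\B|^{3/q'}$. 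The cubic remainders split similarly: $\|h_E\|_{q'}^3=|E\Delta\B|^{3/q'}$ directly; $\|h'\|_{q'}^3\lesssim\|f-1\|_{L^{q'}(E)}^3+\|g\|_{L^{q'}(E)}^3\lesssim\|f-1\|_1^2+\|g\|_{L^2(E)}^3$ using $3/q'>2$, $\|f-1\|_1\le 1$, and bounded $|E|$; and $\|r\|_{q'}^3$ is of comparable size.

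The main technical obstacle will be performing the cross-term bookkeeping cleanly: each bilinear form must be routed into an error belonging to the announced scheme, and residual mixed products $|E\Delta\B|^{\alpha}\|f-1\|_1^\beta\|g\|_{L^2(E)}^\gamma$ dispatched via Young and AM-GM inequalities with exponents tuned so that no power of $|E\Delta\B|$ standalone survives. The improved $L^1$-bound on $r$, together with the hypothesis that $|E\Delta\B|$, $\|f-1\|_{L^1(E)}$, and $\|g\|_{L^2(E)}$ are all sufficiently small, is what makes this bookkeeping possible.
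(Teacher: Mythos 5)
Your decomposition $h=h_E+h'+r$ with $r=(fe^{ig}-1)1_{E\setminus\B}$ is correct and is essentially a cleaner packaging of what the paper does (the paper reaches the same cancellation by writing $f\cos g\,1_E-1_{\B}=(f\cos g\,1_E-1_E)+(1_E-1_{\B})$ and then peeling off $f'e^{ig'}1_{\B}$); the identification of the linear term and the treatment of the cubic remainders are fine. The gap is in the cross-term bookkeeping. You propose to bound every bilinear term by $\|h_E\|_1\|h'\|_1$, $\|h'\|_1\|r\|_1$, etc., using the crude estimates $\|h'\|_1\lesssim\|f-1\|_{L^1(E)}+\|g\|_{L^2(E)}$ and $\|r\|_1\lesssim\|f-1\|_{L^1(E)}+|E\Delta\B|^{1/2}\|g\|_{L^2(E)}$, and then to "dispatch" the resulting products by Young/AM--GM. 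This cannot work: $B(h_E,h')$ then produces a term $|E\Delta\B|\cdot\|g\|_{L^2(E)}$, and $B(h',r)$ produces $\|f-1\|_{L^1(E)}\cdot\|g\|_{L^2(E)}$, and neither is $O$ of the claimed error. For the first, take $\|f-1\|_1=0$ and $|E\Delta\B|=\|g\|_2=\epsilon$: the term is $\epsilon^2$ while the allowed error is $O(\epsilon^{5/2}+\epsilon^{3/q'})=o(\epsilon^2)$ since $3/q'>2$. For the second, take $|E\Delta\B|=0$, $\|g\|_2=\epsilon$, $\|f-1\|_1=\epsilon^{3/2}$: the term is $\epsilon^{5/2}$ versus an allowed error $O(\epsilon^3)$. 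No choice of exponents in Young's inequality repairs a bound that is genuinely false in these regimes.

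The missing idea is structural, not arithmetical: you must split real and imaginary parts before estimating. Since $h_E$ is real-valued, only $\Re h'$ and $\Re r$ pair with it in the quadratic form, and these are \emph{quadratically} small in $g$, namely $|\Re h'|,|\Re r|\le(|f-1|+\tfrac12 g^2)1_E$, giving $B(h_E,h')+B(h_E,r)=O\bigl(|E\Delta\B|(\|f-1\|_{L^1(E)}+\|g\|_{L^2(E)}^2)\bigr)$, which fits the first error class. In $B(h',r)$ the imaginary--imaginary pairing is $O(\|f\sin g\,1_{E\cap\B}\|_1\,\|f\sin g\,1_{E\setminus\B}\|_1)=O(\|g\|_{L^2(E)}\cdot|E\Delta\B|^{1/2}\|g\|_{L^2(E)})$, again admissible, while the real--real pairing is $O\bigl((\|f-1\|_{L^1(E)}+\|g\|_{L^2(E)}^2)^2\bigr)$. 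This is exactly why the paper carries $f\cos g$ and $f\sin g$ as separate objects throughout its computation rather than estimating $\|fe^{ig}1_E-\cdot\|_1$ wholesale. With that modification your argument closes; without it, it does not.
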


\begin{proof} As in Lemma \ref{Taylorgen}, set $h=fe^{ig}1_E-1_{\B}$. Using the expression for $\|\widehat{fe^{ig}1_E}\|_q^q$ from Lemma \ref{Taylorgen}, we replace some of the terms with $h$ by $h-1_E+1_E$ and expand.

\begin{align}
\|\widehat{fe^{ig}1_E}\|_q^q  &= \|\widehat{1_{\B}}\|_q^q+q\langle K_q, f\cos g1_E-1_E+1_E-1_{\B}\rangle \nonumber\\
    &\quad -\frac{1}{4}q(q-2)\langle f\sin g1_E*f\sin g1_E,L_q\rangle +\frac{1}{4}q^2\langle f\sin g 1_E,f\sin g1_E*L_q\rangle\nonumber\\
    &\quad +\frac{1}{4}q(q-2)\langle (f\cos g1_E-1_E+1_E-1_{\B})*(f\cos g1_E-1_E+1_E-1_{\B}),L_q\rangle \nonumber\\
    &\quad +\frac{1}{4}q^2\langle (f\cos g1_E-1_E+1_E-1_{\B})*(\tilde{f}\cos \tilde{g}1_{\tilde{E}}-1_{\tilde{E}}+1_{\tilde{E}}-1_{\B}),L_q\rangle\nonumber\\
    &\quad+O(\|h\|_{q'}^3) \nonumber\\
&= \|\widehat{1_{\B}}\|_q^q+q\langle K_q, f\cos g1_E-1_E\rangle+q\langle K_q, 1_E-1_{\B}\rangle \nonumber\\
    &\quad -\frac{1}{4}q(q-2)\langle f\sin g1_E*f\sin g1_E,L_q\rangle +\frac{1}{4}q^2\langle f\sin g 1_E,f\sin g1_E*L_q\rangle\nonumber\\
    &\quad +\frac{1}{4}q(q-2)\langle (1_E-1_{\B})*(1_E-1_{\B}),L_q\rangle\nonumber \\
    &\quad +\frac{1}{4}q^2\langle (1_E-1_{\B})*(1_{\tilde{E}}-1_{\B}),L_q\rangle\nonumber\\
    &\quad+O(\|f\cos g1_E-1_E\|_1\|f\cos g1_E-1_{\B}\|_1+\|f\cos g1_E-1_E\|_1^2)+O(\|h\|_{q'}^3) \nonumber\\
&= \|\widehat{1_E}\|_q^q+q\langle K_q, f\cos g1_E-1_E\rangle\label{lastline} \\
    &\quad -\frac{1}{4}q(q-2)\langle f\sin g1_E*f\sin g1_E,L_q\rangle +\frac{1}{4}q^2\langle f\sin g 1_E,f\sin g1_E*L_q\rangle\nonumber\\
    &\quad+O(\|f\cos g1_E-1_E\|_1\|f\cos g1_E-1_{\B}\|_1)\nonumber\\
    &\quad+O(\|fe^{ig}1_E-1_E\|_{q'}^3+\|1_E-1_{\B}\|_{q'}^3) . \nonumber
\end{align}
where we used that Lemma \ref{Taylorgen} gives
\begin{align*}
\|\widehat{1_E}\|_q^q&= \|\widehat{1_{\B}}\|_q^q+q\langle K_q, 1_E-1_{\B}\rangle +\frac{1}{4}q(q-2)\langle (1_E-1_{\B})*(1_E-1_{\B}),L_q\rangle\nonumber \\
    & +\frac{1}{4}q^2\langle (1_E-1_{\B}),(1_E-1_{\B})*L_q\rangle+O(|E\Delta \B|^{3/q'}) .\nonumber 
\end{align*}
Lemma \ref{Taylorgen} also gives the following Taylor expansion for $\|\widehat{f'e^{ig'}1_{E}}\|_q^q$.

\begin{align*} 
\|\widehat{f'e^{ig'}1_\B}\|_q^q    &= \|\widehat{1_{\B}}\|_q^q+q\langle K_q, (f'\cos g'-1)1_{\B}\rangle \\
&-\frac{1}{4}q(q-2)\langle f'\sin g'1_{\B}*f'\sin g'1_{\B},L_q\rangle +\frac{1}{4}q^2\langle f'\sin g'1_{\B},f'\sin g'1_{\B}*L_q\rangle  \nonumber\\
&+\frac{1}{4}q(q-2)\langle (f'\cos g'-1)1_{\B}*(f'\cos g'-1)1_{\B},L_q\rangle\nonumber \\
    & +\frac{1}{4}q^2\langle (f'\cos g'-1)1_{\B},(f'\cos g'-1)1_{\B}*L_q\rangle+O(\|f'e^{ig'}-1\|_{L^{q'}(\B)}^3) \nonumber \\
&= \|\widehat{1_{\B}}\|_q^q+q\langle K_q, (f'\cos g'-1)1_{\B}\rangle \nonumber\\
&-\frac{1}{4}q(q-2)\langle f'\sin g'1_{\B}*f'\sin g'1_{\B},L_q\rangle +\frac{1}{4}q^2\langle f'\sin g'1_{\B},f'\sin g'1_{\B}*L_q\rangle  \nonumber\\
&+O(\|f'\cos g'-1\|_{L^1(\B)}^2)+O(\|f'e^{ig'}-1\|_{L^{q'}(\B)}^3) .\nonumber \\
\end{align*}
Using that $\|(f'e^{ig'}-1)1_{\B}\|_{q'}=\|(fe^{ig}-1)1_{E\cap \B}\|_{q'}\le \|(fe^{ig}-1)1_E\|_{q'}$ and that $\|f'\cos g'-1\|_{L^1(\B)}^2=\|f\cos g-1\|_{L^1(E\cap\B)}^2\le \|f\cos g1_E-1_E\|_1\|f\cos g1_E-1_{\B}\|_1 $, we will extract the expression for $\|\widehat{f'e^{ig'}1_{E}}\|_q^q$ above from some of the terms from  (\ref{lastline}).

\begin{align*}
q\langle K_q,& f\cos g1_E-1_E\rangle  -\frac{1}{4}q(q-2)\langle f\sin g1_E*f\sin g1_E,L_q\rangle +\frac{1}{4}q^2\langle f\sin g 1_E,f\sin g1_E*L_q\rangle\\
    &= q\langle K_q, f\cos g1_E-1_E-f'\cos g'1_{\B}+1_{\B}\rangle + q\langle K_q, f'\cos g'1_{\B}-1_{\B}\rangle   \\
    &\quad-\frac{1}{4}q(q-2)\langle (f\sin g1_E-f'\sin g'1_{\B}+f'\sin g'1_{\B})*(f\sin g1_E-f'\sin g'1_{\B}+f'\sin g'1_{\B}),L_q\rangle \\
    &\quad +\frac{1}{4}q^2\langle (f\sin g 1_E-f'\sin g'1_{\B}+f'\sin g'1_{\B}),(f\sin g1_E-f'\sin g'1_{\B}+f'\sin g'1_{\B})*L_q\rangle\\
&= q\langle K_q, f\cos g1_{E\setminus \B}-1_{E\setminus \B}-1_{B\setminus E}+1_{B\setminus E}\rangle + q\langle K_q, f\cos g1_{\B}-1_{\B}\rangle   \\
    &\quad-\frac{1}{4}q(q-2)\langle f'\sin g'1_{\B}*f'\sin g'1_{\B},L_q\rangle \\
    &\quad +\frac{1}{4}q^2\langle f'\sin g'1_{\B},f'\sin g'1_{\B}*L_q\rangle\\
    &\quad +O(\|f\sin g1_E-f'\sin g'1_{\B}\|_1\|f\sin g1_E\|_1+\|f\sin g1_E-f'\sin g'1_{\B}\|_1^2) \\
&=  q\langle K_q, f\cos g1_{E\setminus \B}-1_{E\setminus \B}\rangle + q\langle K_q, f'\cos g'1_{\B}-1_{\B}\rangle   \\
    &\quad-\frac{1}{4}q(q-2)\langle f'\sin g'1_{\B}*f'\sin g'1_{\B},L_q\rangle +\frac{1}{4}q^2\langle f'\sin g'1_{\B},f'\sin g'1_{\B}*L_q\rangle\\    
    &\quad +O(\|f\sin g1_{E\setminus \B}\|_1\|f\sin g1_E\|_1+\|f\sin g1_{E\setminus \B}\|_1^2) \\
&= q\langle K_q, f\cos g1_{E\setminus \B}-1_{E\setminus \B}\rangle + q\langle K_q, f'\cos g'1_{\B}-1_{\B}\rangle   \\
    &\quad-\frac{1}{4}q(q-2)\langle f'\sin g'1_{\B}*f'\sin g'1_{\B},L_q\rangle +\frac{1}{4}q^2\langle f'\sin g'1_{\B},f'\sin g'1_{\B}*L_q\rangle\\    
    &\quad +O(\|g\|_2^2|E\Delta\B|^{1/2})    \\
&=q\langle K_q, f\cos g1_{E\setminus \B}-1_{E\setminus \B}\rangle-\|\widehat{1_{\B}}\|_q^q+\|\widehat{f'e^{ig'}1_{\B}}\|_q^q\\
    &\quad+O(\|g\|_2^2|E\Delta\B|^{1/2})+O(\|f\cos g1_E-1_E\|_1\|f\cos g 1_E-1_{\B}\|_1)+O(\|(fe^{ig}-1)1_E\|_{q'}^3)  . 
\end{align*}

Using this simplified expression in \ref{lastline} gives
\begin{align*}
\|\widehat{fe^{ig}1_E}\|_q^q  &= \|\widehat{1_E}\|_q^q+q\langle K_q, f\cos g1_{E\setminus \B}-1_{E\setminus \B}\rangle-\|\widehat{1_{\B}}\|_q^q+\|\widehat{f'e^{ig'}1_{\B}}\|_q^q\\
    &\quad+O(\|g\|_2^2|E\Delta\B|^{1/2})+O(\|f\cos g1_E-1_E\|_1\|f\cos g1_E-1_{\B}\|_1)\\
    &\quad+O(\|fe^{ig}1_E-1_E\|_{q'}^3+\|1_E-1_{\B}\|_{q'}^3) ,
\end{align*}
so it remains to understand the big-O terms. 

Note that  
\[ \|fe^{ig}1_E-1_E\|_{q'}\le \|f-1\|^{1/q'}_{L^1(E)}+\|g\|_{L^2(E)}(2|E|)^{(2-q')/(2q')}. \] 
We also have that $\|f\cos g1_E-1_E\|_1\le\|g\|_{L^2(E)}^2+\|f-1\|_{L^1(E)}$ and
\[\|f\cos g1_E-1_{\B}\|_1\le \|g\|_{L^2(E)}^2+\|f-1\|_{L^1(E)}+|E\Delta\B|. \] 
Thus, noting that $3/q'>2$, we can simplify the big-O terms to 
\[ O((\|g\|_2^2+\|f-1\|_{L^1(E)})|E\Delta\B|^{1/2})+O(\|g\|_{L^2(E)}^3+\|f-1\|^{2}_{L^1(E)}+|E\Delta\B|^{3/q'}). \]

\end{proof}

\end{subsection}
\end{section}

\section{Mostly modulus variation: $\|f-1\|_{L^1(E)}^{1/2}\ge \max( MN|E\Delta\B|,N\|g\|_{L^2(E)})$}

Let $K_q,L_q$ be the functions defined in (\ref{Kq}) and (\ref{Lq}) in the following discussion. We use our most basic Taylor expansion from Lemma \ref{Taylorgen} to understand this case.

\begin{proposition}\label{modulus}
Let $d\ge 1$ and let ${\overline{q}}\ge 4$ be an even integer. There exist $M,N\in\R^+$, $\delta_0>0$, and $\rho>0$ all depending on $\overline{q}$ and $d$ such that the following holds. Let $q\in(3,\infty)$, $E\subset\R^d$ be a Lebesgue measurable set with $|E|\le |\B|$, $0\le f\le 1$, and $g$ be real valued. Suppose that  $\|f-1\|_{L^1(\B)}\le\delta_0$, $\|g\|_{L^2(E)}\le \delta_0$, $|E\Delta\B| \le\delta_0$, and $|q-{\overline{q}}|\le\rho $. If
\[\|f-1\|_{L^1(E)}^{1/2}\ge \max(MN|E\Delta\B|,N\|g\|_{L^2(E)}), \]
then 
\begin{align*}
\|\widehat{fe^{ig}1_{E}}\|_q^q&\le \|\widehat{1_{\B}}\|_q^q-c_{q,d}\|f-1\|_{L^1(E)}
\end{align*} 
for a constant $c_{q,d}>0$ depending only on the exponent $q$ and on the dimension. 

\end{proposition}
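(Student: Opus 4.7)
The plan is to apply the Taylor expansion of Lemma \ref{Taylorgen} directly with $h=fe^{ig}1_E-1_{\B}$, writing
\[ \|\widehat{fe^{ig}1_E}\|_q^q = \|\widehat{1_{\B}}\|_q^q + q\langle K_q,\Re h\rangle + Q(h) + O(\|h\|_{q'}^3), \]
where $Q(h)$ collects the quadratic terms of the form $\langle h_1*h_2,L_q\rangle$. The strategy is to extract a main negative term proportional to $\|f-1\|_{L^1(E)}$ from the linear contribution $q\langle K_q,\Re h\rangle$ by exploiting the positivity and radial monotonicity of $K_q$, and then to absorb the quadratic and cubic remainders using the dominance hypothesis.

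For the linear term I decompose $\Re h = -(1-f\cos g)1_{E\cap\B} + f\cos g\,1_{E\setminus\B} - 1_{\B\setminus E}$, so that
\[ \langle K_q,\Re h\rangle = -\int_{E\cap\B}K_q(1-f\cos g) - \int_{E\setminus\B}K_q(1-f\cos g) + \Bigl(\int_{E\setminus\B}K_q - \int_{\B\setminus E}K_q\Bigr). \]
Using $K_q\ge c_{\B}>0$ on $\B$ (Lemma \ref{Kqprop}), $K_q\ge c_2>0$ on $2\B$ (the remark after Lemma \ref{Kqprop}), and $1-f\cos g\ge 1-f$, the first two integrals contribute at most $-c_{\B}\|f-1\|_{L^1(E\cap\B)}-c_2\|f-1\|_{L^1((E\setminus\B)\cap 2\B)}$. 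For the last difference, I split $E\setminus\B=((E\setminus\B)\cap 2\B)\cup(E\setminus 2\B)$ and invoke the radial monotonicity of $K_q$ on $[0,\overline q-1]$ (Lemma \ref{Kqprop}, equation \eqref{a1}) together with its decay at infinity: one has $K_q\le K_q(1)$ on $(E\setminus\B)\cap 2\B$ and $K_q\le K_q(2)$ on $E\setminus 2\B$, while $K_q\ge K_q(1)$ on $\B$. Combined with the measure comparison $|E\setminus\B|\le|\B\setminus E|$ forced by $|E|\le|\B|$, this yields
\[ \int_{E\setminus\B}K_q-\int_{\B\setminus E}K_q \le K_q(1)(|E\setminus\B|-|\B\setminus E|) - c_0|E\setminus 2\B| \le -c_0|E\setminus 2\B|, \]
with $c_0=K_q(1)-K_q(2)>0$. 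Since $\|f-1\|_{L^1(E\setminus 2\B)}\le|E\setminus 2\B|$ and $\|f-1\|_{L^1(E)}$ decomposes into its restrictions to $E\cap\B$, $(E\setminus\B)\cap 2\B$, and $E\setminus 2\B$, the three pieces combine to give
\[ q\langle K_q,\Re h\rangle \le -qc_{*}\|f-1\|_{L^1(E)},\qquad c_{*}=\min(c_{\B},c_2,c_0)>0. \]

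For the quadratic remainder, boundedness of $L_q$ (\S\ref{KqLq}) and a Young-type estimate give $|\langle h_1*h_2,L_q\rangle|\le\|L_q\|_\infty\|h_1\|_1\|h_2\|_1\le C\|h\|_1^2$. Since $\|h\|_1\le|E\Delta\B|+\|f-1\|_{L^1(E)}+\|g\|_{L^2(E)}|\B|^{1/2}$, the hypotheses $|E\Delta\B|^2\le(MN)^{-2}\|f-1\|_{L^1(E)}$, $\|g\|_{L^2(E)}^2\le N^{-2}\|f-1\|_{L^1(E)}$, and $\|f-1\|_{L^1(E)}\le\delta_0$ produce $Q(h) \le C((MN)^{-2}+N^{-2}|\B|+\delta_0)\|f-1\|_{L^1(E)}$, a small fraction of $\|f-1\|_{L^1(E)}$ for $M,N$ large and $\delta_0$ small. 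The cubic error $O(\|h\|_{q'}^3)$ is of strictly higher order in the same smallness parameters. Choosing $M,N$ large and $\delta_0,\rho$ small so that the total remainder is at most $\tfrac{1}{2} qc_{*}\|f-1\|_{L^1(E)}$, and using Lemma \ref{Kqlem2} for uniformity of the constants on $|q-\overline q|\le\rho$, we obtain $\|\widehat{fe^{ig}1_E}\|_q^q \le \|\widehat{1_{\B}}\|_q^q - \tfrac{1}{2}qc_{*}\|f-1\|_{L^1(E)}$, which is the claimed inequality with $c_{q,d}=\tfrac{1}{2}qc_{*}$.

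The main obstacle is establishing the middle-piece bound $\int_{E\setminus\B}K_q - \int_{\B\setminus E}K_q \le -c_0|E\setminus 2\B|$: without it, the naive decomposition $\|f-1\|_{L^1(E\cap\B)}\ge\|f-1\|_{L^1(E)}-|E\Delta\B|$ leaks a linear-in-$|E\Delta\B|$ error that cannot be absorbed under the sub-quadratic hypothesis $\|f-1\|_{L^1(E)}^{1/2}\ge MN|E\Delta\B|$, since $\|f-1\|^{1/2}$ can be much larger than $\|f-1\|$ itself. The fix rests on the strict radial monotonicity of $K_q$ past the boundary $|x|=1$ (giving the quantitative gap $K_q(1)>K_q(2)$ from Lemma \ref{Kqprop}), combined with the rearrangement-type inequality $|E\setminus\B|\le|\B\setminus E|$ enforced by the constraint $|E|\le|\B|$.
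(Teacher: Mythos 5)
Your proof is correct and has the same skeleton as the paper's: expand via Lemma \ref{Taylorgen}, extract the dominant negative term from the linear contribution $q\langle K_q,\mathrm{Re}\,h\rangle$ using positivity and radial monotonicity of $K_q$ together with $|E|\le|\B|$, and absorb the quadratic and cubic remainders via the dominance hypothesis (your remainder estimates, including the observation that $\|h\|_{q'}^3\lesssim\|f-1\|_{L^1(E)}^{3/(2q')}$ with $3/(2q')>1$ for $q>3$, match the paper's). Where you genuinely diverge is the linear term. The paper partitions $E$ by the signs of $\cos g$ and of $K_q$, discards the two cross terms of favorable sign, pays a $C|\{\cos g<0\}|\le CN^{-2}\|f-1\|_{L^1(E)}$ penalty, and then invokes the identity $\inf_{\B}K_q=\sup_{|x|>1}K_q=K_q|_{\partial\B}$ to fold the inside and outside contributions into $-\inf_\B K_q\cdot\|f-1\|_{L^1(E)}$ in one stroke. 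You instead use the pointwise inequality $1-f\cos g\ge 1-f\ge 0$, which removes the sign bookkeeping and the $N^{-2}$ leakage from the linear term entirely, and you replace the boundary-value identity by the quantitative gap $c_0=K_q(1)-K_q(2)>0$, whose reserve $-c_0|E\setminus 2\B|$ pays for $\|f-1\|_{L^1(E\setminus 2\B)}$. Both devices rest on the same inputs (Lemma \ref{Kqprop} and $|E\setminus\B|\le|\B\setminus E|$), and your version is arguably cleaner. One point you should make explicit: in bounding the second integral you tacitly use $-\int_{E\setminus 2\B}K_q(1-f\cos g)\le 0$ and $K_q\le K_q(2)$ on $\{|x|\ge 2\}$. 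Both are exact at $q=\overline q$, where $K_{\overline q}$ is the nonnegative, radially decreasing $(\overline q-1)$-fold convolution power of $1_\B$, but for nearby non-even $q$ they hold only up to $o_{q-\overline q}(1)$ errors, which must be absorbed into the $-c_0|E\setminus 2\B|$ reserve by shrinking $\rho$ via Lemma \ref{Kqlem2}; this is the same perturbative step the paper performs elsewhere and does not affect the conclusion.
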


\begin{proof} We begin with the Taylor expansion for $\|\widehat{fe^{ig}1_E}\|_q^q$ from Lemma \ref{Taylorgen}.
\begin{align}
\|\widehat{fe^{ig}1_E}\|_q^q  &=\|\widehat{1_{\B}}\|_q^q+q\langle K_q, f\cos g1_E-1_{\B}\rangle \nonumber\\
    &\quad -\frac{1}{4}q(q-2)\langle f\sin g1_E*f\sin g1_E,L_q\rangle +\frac{1}{4}q^2\langle f\sin g 1_E,f\sin g1_E*L_q\rangle  \nonumber\\
    &\quad +\frac{1}{4}q(q-2)\langle (f\cos g1_E-1_{\B})*(f\cos g1_E-1_{\B}),L_q\rangle \nonumber\\
    &\quad +\frac{1}{4}q^2\langle (f\cos g1_E-1_{\B})*(\tilde{f}\cos \tilde{g}1_{\tilde{E}}-1_{\B}),L_q\rangle\nonumber\\
    &\quad+O(\|fe^{ig}1_E-1_{\B}\|_{q'}^3) \nonumber\\
&= \|\widehat{1_{\B}}\|_q^q+q\langle K_q, f\cos g1_E-1_{\B}\rangle \label{fra3}+O(\|\sin g\|_{L^1(E)}^2+\|f\cos g1_E-1_{\B}\|_1^2+\|fe^{ig}1_E-1_{\B}\|_{q'}^3)  . 
\end{align}

Analyze the inner product term in (\ref{fra3}).
  Let $G_{+}=\{x\in E:\cos g(x)\ge 0\}$, $G_{-}=\{x\in E:\cos g(x)<0\}$, $K_+=\{x\in\R^d:K_q(x)\ge 0\}$, and $K_{-}=\{x\in\R^d:K_q(x)<0\}$. The term $\langle K_q,f\cos g1_E-1_{\B}\rangle$ may be handled as follows:
    
\begin{align*}
    \langle K_q,f\cos g1_E-1_{\B}\rangle&= \langle K_q,f\cos g(1_{E\cap G_{+}\cap K_{+}}+1_{E\cap G_{+}\cap K_{-}}+1_{E\cap G_{-}\cap K_{+}}+1_{E\cap G_{-}\cap K_{-}})-1_{\B}\rangle \\
    &\le \langle K_q,f\cos g1_{E\cap G_{+}\cap K_{+}}-1_{\B}\rangle +\langle K_q,f\cos g 1_{E\cap G_{-}\cap K_{-}}\rangle \\
    &\le \langle K_q,f1_{E\cap G_{+}\cap K_{+}}-1_{\B}\rangle+C|G_{-}|
\end{align*}
where $C=C(q,d)>0$ is a constant. Note that on $G_{-}$, we must have $|g|\ge \pi/2$, so $|G_{-}|\le \frac{4}{\pi^2}\|g\|_{L^2(E)}^2\le \frac{4}{\pi^2N^2}\|f-1\|_{L^1(E)}$. Now consider $\langle K_q,f1_{E\cap G_{+}\cap K_{+}}-1_{\B}\rangle$: 

\begin{align*}
    \langle K_q,f&1_{(E\cap G_{+}\cap K_{+})\cap \B}-1_{\B\cap E}\rangle+\langle K_q,f1_{(E\cap G_{+}\cap K_{+})\setminus  \B}-1_{\B\setminus E}\rangle\\
    &\qquad\qquad  \le \langle K_q,f1_{E\cap \B}-1_{\B\cap E}\rangle+\langle K_q,f1_{(E\cap K_{+})\setminus  \B}-1_{\B\setminus E}\rangle \\
    &\le -\inf_{\B}K_q\cdot \|f-1\|_{L^1(E\cap \B)}+\sup_{|x|>1}K_q(x)\cdot \|f1_{E\setminus\B}\|_1-\inf_{\B}K_q\cdot |\B\setminus E| .
\end{align*} 
By Lemma \ref{Kqprop}, we know that $\inf_{\B}K_q=\sup_{|x|>1}K_q(x)=\left.K_q\right|_{\partial \B}$. We assumed that $|E|\le |\B|$ (so $-|\B\setminus E|\le -|E\setminus\B|$) in the hypotheses. Using these two observations, we further simplify and bound the above.  
\begin{align*} 
\sup_{|x|>1}K_q(x)\cdot \|f1_{E\setminus\B}\|_1-\inf_{\B}K_q\cdot |\B\setminus E|&\le     -\left.K_q\right|_{\partial\B}\left(|E\setminus \B|- \|f1_{E\setminus\B}\|_1\right) \\
&= -\left.K_q\right|_{\partial\B}\|f-1\|_{L^1(E\setminus \B)} .
\end{align*}
In summary, we have shown that 
\[ \langle K_q,f\cos g1_E-1_{\B}\rangle \le -\inf_{\B}K_q\cdot \|f-1\|_{L^1(E)}+C\frac{1}{N^2}\|f-1\|_{L^1(E)}.\]

Now analyze the error term in (\ref{fra3}). The error $O(\|f\cos g1_E-1_{\B}\|_1^2)$ can be replaced by $ O(\|f-1\|_{L^1(E)}^2+|E\Delta\B|^2)$ because
\begin{align*}  
\|f\cos g1_E-1_{\B}\|_1&\le \|f\cos g 1_E-\cos g1_E\|_1+\|\cos g1_E-1_E\|_1+|E\Delta \B|\\
    &\le \|f-1\|_{L^1(E)}+ \|g\|^2_{L^2(E)}+|E\Delta\B|, 
\end{align*} 
and similarly,  $O(\|fe^{ig}1_E-1_{\B}\|_{q'}^3)$ can be replaced by $O(\|f-1\|_{L^1(E)}^{3/(2q')})$ since
\begin{align*}
\|fe^{ig}1_E-1_{\B}\|_{q'}&\le \|fe^{ig}1_E-e^{ig}1_E\|_{q'}+\|e^{ig}1_E-1_E\|_{q'}+\|1_E-1_{\B}\|_{q'} \\
    &\le \|f-1\|_{L^1(E)} + \|g\|_{L^2(E)}|E|^{(q'-2)/(q')^2}+|E\Delta\B|^{1/q'}.
\end{align*}
Also note that since $|\sin x|\le| x|$, $\|\sin g\|_1^2\le\|g\|_1^2\le |\B|\|g\|_{L^2(E)}^2\le \frac{|\B|}{N^2}\|f-1\|_{L^1(E)}$.

Returning to (\ref{fra3}), we now have the following bound.

\begin{align*}
\|\widehat{fe^{ig}1_E}\|_q^q&= \|\widehat{1_{\B}}\|_q^q+q\langle K_q, f\cos g1_E-1_{\B}\rangle+O(\|\sin g\|_{L^1(E)}^2+\|f\cos g1_E-1_{\B}\|_1^2+\|fe^{ig}1_E-1_{\B}\|_{q'}^3) \\
&\le \|\widehat{1_{\B}}\|_q^q-q\inf_{ \B}K_q\cdot\|f-1\|_{L^1(E)} +C_1\left(\frac{1}{N^2}+\frac{1}{(MN)^2}\right)\|f-1\|_{L^1(E)}\\
    &\quad+O(\|f-1\|_{L^1(E)}^{3/(2q')})
\end{align*}
Thus for $M,N$ large enough and $\delta_0$ small enough, we have the desired conclusion. 

\end{proof}

The exponent $1$ and the $L^1(E)$ norm of $\|f-1\|_{L^1(E)}$ in the previous proposition are optimal in the following sense.

\begin{lemma}\label{optf} Under the hypotheses of Proposition \ref{modulus}, suppose for $n>0$ and $p\ge 1$ that 

\begin{align*} 
\|\widehat{f1_{\B}}\|_q^q &\le    \|\widehat{1_{\B}}\|_q^q-c_{q,d}\|f-1\|_{L^p(\B)}^n.
\end{align*}
Then $n\ge p$.

\end{lemma}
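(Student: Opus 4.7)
The plan is to test the hypothesized inequality against a one-parameter family of perturbations of $1_{\B}$ and extract the forced relation between $n$ and $p$ from the leading-order behavior in the Taylor expansion of Lemma \ref{Taylorgen}. Take $g=0$ and $E=\B$; for each small $\varepsilon>0$, fix a Lebesgue measurable set $A_\varepsilon\subset\B$ with $|A_\varepsilon|=\varepsilon$, and set $f_\varepsilon:=1_{\B\setminus A_\varepsilon}$. Then $f_\varepsilon$ is real-valued, $0\le f_\varepsilon\le 1$, and $\|f_\varepsilon-1\|_{L^p(\B)}=\varepsilon^{1/p}$. For $\varepsilon<\delta_0$, the triple $(f_\varepsilon,0,\B)$ satisfies every hypothesis of Proposition \ref{modulus} (the side conditions involving $g$ and $|E\Delta\B|$ are vacuous), so the assumed inequality yields
\[ c_{q,d}\,\varepsilon^{n/p}\le \|\widehat{1_{\B}}\|_q^q - \|\widehat{f_\varepsilon 1_{\B}}\|_q^q. \]

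Next I would apply Lemma \ref{Taylorgen} with $h=f_\varepsilon 1_{\B}-1_{\B}=-1_{A_\varepsilon}$. Because $h$ is real, all terms in the expansion involving $\Im\,h$ drop out, and the identity reduces to
\[ \|\widehat{f_\varepsilon 1_{\B}}\|_q^q = \|\widehat{1_{\B}}\|_q^q - q\langle K_q,1_{A_\varepsilon}\rangle + \tfrac{1}{4}q(q-2)\langle 1_{A_\varepsilon}*1_{A_\varepsilon},L_q\rangle + \tfrac{1}{4}q^2\langle 1_{A_\varepsilon}*1_{\tilde A_\varepsilon},L_q\rangle + O(\varepsilon^{3/q'}). \]
By \textsection\ref{KqLq}, $K_q$ and $L_q$ are bounded on $\R^d$. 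Hence the linear term satisfies $|\langle K_q,1_{A_\varepsilon}\rangle|\le \|K_q\|_{L^\infty(\B)}\varepsilon$, while the two quadratic terms are bounded by $\|L_q\|_\infty\varepsilon^2$. Since $q>3$ one has $3/q'=3(q-1)/q>2$, so the remainder $O(\varepsilon^{3/q'})$ and the quadratic terms are all $o(\varepsilon)$ as $\varepsilon\to 0^+$; in particular
\[ \|\widehat{1_{\B}}\|_q^q - \|\widehat{f_\varepsilon 1_{\B}}\|_q^q \le C_q\,\varepsilon + o(\varepsilon). \]

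Combining the two displays yields $c_{q,d}\,\varepsilon^{n/p}\le C_q\,\varepsilon + o(\varepsilon)$ for all sufficiently small $\varepsilon>0$; dividing by $\varepsilon^{n/p}$ and letting $\varepsilon\to 0^+$ forces $n/p\ge 1$, i.e.\ $n\ge p$. There is no substantive obstacle: the test family is engineered so that the left-hand side of the hypothesized inequality scales like $\varepsilon^{n/p}$ while the difference on the right-hand side scales linearly in $\varepsilon$, and matching these two scales dictates the optimality. The only point requiring care is the careful accounting of error orders in the Taylor expansion, which is routine given the boundedness and continuity properties of $K_q$ and $L_q$ collected in \textsection\ref{KqLq}.
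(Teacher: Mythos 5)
Your proposal is correct and follows essentially the same route as the paper: both apply Lemma \ref{Taylorgen} to bound $\|\widehat{1_{\B}}\|_q^q-\|\widehat{f1_{\B}}\|_q^q$ by a constant times $\|f-1\|_{L^1(\B)}$ plus higher-order errors, and then test against a family concentrated on a set of measure $\varepsilon$ so that $\|f-1\|_{L^p(\B)}^n\sim\varepsilon^{n/p}$ forces $n/p\ge 1$. Your explicit choice $f_\varepsilon=1_{\B\setminus A_\varepsilon}$ is just a clean concrete instantiation of the paper's final step (which takes $|f-1|\ge 1/2$ on a shrinking support), and your accounting of the error orders is accurate.
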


\begin{proof} Use Lemma \ref{Taylorgen} with $h=f1_{\B}-1_{\B}$:

\begin{align*}
\|\widehat{f1_{\B}}\|_q^q = \|\widehat{1_{\B}}\|_q^q+q\langle K_q, (f-1)1_{\B}\rangle +\frac{1}{4}q(q-2)\langle h*h,L_q\rangle  +\frac{1}{4}q^2\langle h*\tilde{h},L_q\rangle+O(\|h\|_{q'}^3+\|h\|_{q'}^q) .
\end{align*}
This combined with the hypothesis leads to 
\begin{align*}
c_{\overline{q},d}\|f-1\|_{L^p(\B)}^n& \le q\langle K_q, |f-1|1_{\B}\rangle+O(\|f-1\|_{L^1(\B)}^2+\|f-1\|_{L^1(\B)}^{q/q'}) .
\end{align*}
For all $\|f-1\|_{L^1(\B)}$ sufficiently small, this implies
\[ c_{\overline{q},d}\|f-1\|_{L^p(\B)}^n\le c\|f-1\|_{L^1(\B)}.\]
If we take $|f-1|\ge 1/2$, then 

\[2^{-(p-1)n/p}c_{q,d}\|f-1\|_{L^1(\B)}^{n/p}\le c_{q,d}\|f-1\|_{L^p(\B)}^n\le c\|f-1\|_{L^1(\B)}. \]
If we take $f=(1-\eta)$ and let $\eta\to 0$, then we must have $n/p\ge 1$. 

\end{proof}

Note that for $n\ge p$,
\[\|f-1\|_{L^p(E)}^n \le \|f-1\|_{L^1(E)}^{n/p} \le \|f-1\|_{L^1(E)} , \]
so Proposition \ref{modulus} is stronger than if $\|f-1\|_{L^1(E)}$ were replaced by $\|f-1\|_{L^p(E)}^n$.

\section{\label{genq}The special case $E=\B$ for $q$ near an even integer.}

Let $K_q,L_q$ be the functions defined in (\ref{Kq}) and (\ref{Lq}). In order to treat the remaining cases of mostly support variation (in \textsection\ref{supportsec}) and mostly frequency variation (in \textsection{\ref{freq2}}) of our near-extremizer $fe^{ig}1_E$ with $0\le f\le 1$, $g$ real-valued, begin with Lemma \ref{Taylorfg}. Let $g'=g$ on $E\cap \B$ and $g'=0$ on $B\setminus E$ and let $f'=f$ on $E\cap \B$ and $f'=1$ on $B\setminus E$. Recall the statement from Lemma \ref{Taylorfg} :

\begin{align*} 
\|\widehat{fe^{ig}1_E}\|_q^q&=\|\widehat{1_E}\|_q^q+q\langle K_q, f\cos g1_{E\setminus \B}-1_{E\setminus \B}\rangle-\|\widehat{1_{\B}}\|_q^q+\|\widehat{f'e^{ig'}1_{\B}}\|_q^q \\
    &\quad+ O((\|g\|_2^2+\|f-1\|_{L^1(E)})|E\Delta\B|^{1/2})+O(\|g\|_{L^2(E)}^3+\|f-1\|^{2}_{L^1(E)}+|E\Delta\B|^{3/q'}) . 
\end{align*}
In this section, we work to understand the term $\|\widehat{f'e^{ig'}1_{\B}}\|_q^q$ above.

\begin{subsection}{A new Taylor expansion for $\|\widehat{fe^{ig}1_{\B}}\|_q^q$ when $\|f-1\|_1,\|g\|_2\ll1$. }

The structural information we have obtained from Lemma \ref{nearextf} and Proposition \ref{nearextg} guarantees that if $fe^{ig}1_{\B}$ is a near-extremizer, then, after possibly replacing $g$ by an affine translation of $g$,  $fe^{ig}1_{\B}$ is reasonably close to $1_{\B}$. Thus a Taylor expansion of $\|\widehat{fe^{ig}1_{\B}}\|_q^q$ about $\|\widehat{1_{\B}}\|_q^q$ will have an error that we can control. Furthermore, since we know that $|e^{ig}-1|$ is small on the majority of $\B$, we can expand the $\sin(g)$ and $\cos(g)$ that appear in Lemma \ref{Taylorgen} using Taylor series approximations. We split up the set $\B$ into a subset where the frequency $|g|$ is small and the remainder set where the frequency is not small in order to use polynomials to approximate the trigonometric terms in the next lemma. Define 
\[ \B^\epsilon_g:= \{x\in \B: |g(x)|>\epsilon\} \]
for $0<\epsilon<\pi/2$ and $A_g:=\{x\in\B:\cos g(x)\ge 0\}$. Note that in the following lemma, we do not require a specific equivalence representative of $g(x)\in\R/(2\pi)$ for $x\in\B$.

\begin{lemma} \label{Taylorfreq}Let $d\ge 1$ and let $q\ge 4$ be an even integer. Let $f,g$ be real valued functions on $\B$ with $0\le f\le 1$. There exists $\delta_0>0$, depending on $q$ and on $d$, such that if $\|f-1\|_{L^1(\B)}\le\delta_0$ and $\|g\|_{L^2(\B)}\le \delta_0$, then 

\begin{align*}
\|\widehat{fe^{ig}1_{\B}}\|_q^q  &\le  \|\widehat{1_{\B}}\|_q^q-q\inf_{\B}K_q\cdot \left(\|\cos g-1\|_{L^1(A_g\cap \B_g^\epsilon)}+|\B_g^\epsilon\setminus A_g|\right) \\
    &-\frac{q}{2}\langle K_q, g^21_{\B\setminus \B^\epsilon_g}\rangle -\frac{1}{4}q(q-2)\langle g1_{\B\setminus \B^\epsilon_g}*g1_{\B\setminus \B^\epsilon_g},L_q\rangle+\frac{1}{4}q^2\langle  g1_{\B\setminus \B^\epsilon_g},g1_{\B\setminus \B^\epsilon_g}*L_q\rangle\\
    & +\epsilon^2O(\|g\|_{L^2(\B\setminus \B^\epsilon_g)}^2)+ O(|\B_g^\epsilon|^{1/2}\|g\|_{L^2(\B)}^2)+O(\|f-1\|_{L^1(\B)}\|g\|_{L^2(\B)}) \nonumber\\
    &+O(\|f-1\|_{L^1(\B)}^2+\|g\|_{L^2(\B)}^3)
\end{align*} 
where $\B_g^\epsilon=\{x\in \B: |g(x)|>\epsilon\}$ for any $0<\epsilon<\pi/2$ and $A_g=\{x\in\B:\cos g(x)\ge 0\}$. 
\end{lemma}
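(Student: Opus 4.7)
The plan is to apply the general Taylor expansion in Lemma \ref{Taylorgen} with $h = fe^{ig}1_{\B}-1_{\B}$, so that $\Re h = (f\cos g - 1)1_{\B}$ and $\Im h = f\sin g\,1_{\B}$, and then to process each resulting term by splitting $\B$ into the regions $\B\setminus\B_g^\epsilon$ (where $|g|\le\epsilon<\pi/2$, so Taylor approximations of $\sin$ and $\cos$ are accurate) and $\B_g^\epsilon$ (where I instead invoke pointwise positivity of $K_q$ from Lemma \ref{Kqprop}).

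For the linear term $q\langle K_q,\Re h\rangle$, on $\B\setminus\B_g^\epsilon$ I will write $f\cos g - 1 = (\cos g - 1) + (f-1)\cos g$; the second piece is pointwise $\le 0$ (since $f\le 1$, $\cos g\ge 0$, $K_q>0$) and I will simply discard it, while the first gives $-\tfrac{q}{2}\langle K_q, g^21_{\B\setminus\B_g^\epsilon}\rangle + O(\epsilon^2\|g\|_{L^2(\B\setminus\B_g^\epsilon)}^2)$ via $\cos g - 1 = -g^2/2 + O(g^4)$ together with $|g^4|\le\epsilon^2 g^2$ on this set. On $\B_g^\epsilon$ I split further into $A_g\cap\B_g^\epsilon$ and $\B_g^\epsilon\setminus A_g$: on the first, $0\le f\cos g\le\cos g$ and $\cos g - 1\le 0$ give $K_q(f\cos g - 1)\le(\inf_{\B}K_q)(\cos g - 1)$, producing $-q(\inf_{\B}K_q)\|\cos g - 1\|_{L^1(A_g\cap\B_g^\epsilon)}$; on the second $\cos g<0$ forces $f\cos g - 1\le -1$, producing $-q(\inf_{\B}K_q)|\B_g^\epsilon\setminus A_g|$.

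For the quadratic terms involving $\Im h\ast\Im h$ and $\Im h\ast L_q$, I will decompose $\Im h = g\,1_{\B\setminus\B_g^\epsilon} + (\sin g - g)1_{\B\setminus\B_g^\epsilon} + \sin g\,1_{\B_g^\epsilon} + (f-1)\sin g\,1_{\B}$ and expand bilinearly. The diagonal piece in $g\,1_{\B\setminus\B_g^\epsilon}$ yields the two main convolution terms stated in the lemma. Every cross term is controlled by $\|L_q\|_\infty$ times a product of $L^1$-norms via Young's inequality, using $\|(\sin g - g)1_{\B\setminus\B_g^\epsilon}\|_1\lesssim\epsilon^2\|g\|_{L^1}$, $\|\sin g\,1_{\B_g^\epsilon}\|_1\le|\B_g^\epsilon|^{1/2}\|g\|_{L^2}$ (Cauchy--Schwarz), and $\|(f-1)\sin g\|_1\le\|f-1\|_{L^1}^{1/2}\|g\|_{L^2}$ (Cauchy--Schwarz together with $|f-1|\le 1$); these produce contributions of sizes $\epsilon^2\|g\|_{L^2}^2$, $|\B_g^\epsilon|^{1/2}\|g\|_{L^2}^2$, and $\|f-1\|_{L^1}^{1/2}\|g\|_{L^2}^2$ respectively, the last of which is upgraded to $O(\|f-1\|_{L^1}\|g\|_{L^2}) + O(\|g\|_{L^2}^3)$ by AM--GM applied with $a = \|f-1\|_{L^1}^{1/2}\|g\|_{L^2}^{1/2}$ and $b = \|g\|_{L^2}^{3/2}$. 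The quadratic terms involving $\Re h$ are bounded by $\|L_q\|_\infty\|\Re h\|_{L^1}^2$ with $\|\Re h\|_{L^1}\lesssim\|f-1\|_{L^1} + |\B_g^\epsilon| + \|g\|_{L^2}^2$, and fall into the stated buckets after using $\|g\|_{L^2}\le\delta_0$ and the Chebyshev bound $|\B_g^\epsilon|\le\|g\|_{L^2}^2/\epsilon^2$ to absorb $\|g\|_{L^2}^4$ into $\|g\|_{L^2}^3$ and $|\B_g^\epsilon|^2$ into $|\B_g^\epsilon|^{1/2}\|g\|_{L^2}^2$. Finally the cubic remainder $O(\|h\|_{q'}^3)$ from Lemma \ref{Taylorgen} is controlled by $\|h\|_{q'}\lesssim\|f-1\|_{L^1}^{1/q'} + \|g\|_{L^2}$, fitting into $O(\|g\|_{L^2}^3) + O(\|f-1\|_{L^1}^2)$ under the smallness hypotheses.

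The main obstacle will be the bookkeeping: verifying that every cross term genuinely fits one of the four stated $O(\cdot)$ buckets. The delicate step is the AM--GM move that converts the naive bound $\|f-1\|_{L^1}^{1/2}\|g\|_{L^2}^2$ coming from any interaction with $(f-1)\sin g$ into the sharper combination $O(\|f-1\|_{L^1}\|g\|_{L^2}) + O(\|g\|_{L^2}^3)$, together with the parallel check that the size-$|\B_g^\epsilon|^2$ and size-$\|g\|_{L^2}^4$ leftovers from $\Re h\ast\Re h$ are absorbed by $O(|\B_g^\epsilon|^{1/2}\|g\|_{L^2}^2)$ and $O(\|g\|_{L^2}^3)$ respectively via Chebyshev and the smallness assumption on $\delta_0$.
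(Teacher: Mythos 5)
Your proposal is correct and follows essentially the same route as the paper: apply Lemma \ref{Taylorgen} with $h=fe^{ig}1_{\B}-1_{\B}$, treat the linear term by splitting $\B$ into $\B\setminus\B_g^\epsilon$, $A_g\cap\B_g^\epsilon$, and $\B_g^\epsilon\setminus A_g$ using the positivity of $K_q$, peel off $f$ and restrict to $\B\setminus\B_g^\epsilon$ before Taylor-expanding $\sin g$ in the quadratic imaginary-part terms, and dump the real-part quadratic terms and the cubic remainder (via $3/q'>2$) into the error buckets. The only cosmetic difference is your AM--GM step for the $(f-1)\sin g$ cross terms, which the paper sidesteps by bounding $\|(f-1)\sin g\|_1\le\|f-1\|_{L^1(\B)}$ directly.
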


\begin{proof} We use the Taylor expansion from in Lemma \ref{Taylorgen}. 

\begin{align}
\|\widehat{fe^{ig}1_{\B}}\|_q^q  &= \|\widehat{1_{\B}}\|_q^q+q\langle K_q, f\cos g1_{\B}-1_{\B}\rangle \label{bread} -\frac{1}{4}q(q-2)\langle f\sin g1_{\B}* f\sin g1_{\B},L_q\rangle \\
    & +\frac{1}{4}q^2\langle  f\sin g1_{\B}, f\sin g1_{\B}*L_q\rangle+\frac{1}{4}q(q-2)\langle (f\cos g1_{\B}-1_{\B})*(f\cos g1_{\B}-1_{\B}),L_q\rangle \nonumber\\
    & +\frac{1}{4}q^2\langle (f\cos g1_{\B}-1_{\B})*(\tilde{f}\cos \tilde{g}1_{\B}-1_{\B}),L_q\rangle+O(\|fe^{ig}-1\|_{L^{q'}(\B)}^3) \nonumber
\end{align}

Bound the terms with two cosines by $O(\|f\cos g -1\|_{L^1(\B)}^2)$.
Since 
\begin{align*}
    \|f\cos g-1\|_{L^1(\B)}&\le \|f-1\|_{L^1(\B)}+\|\cos g-1\|_{L^1(\B)}\\
    &\le \|f-1\|_{L^1(\B)}+\|g\|_{L^2(\B)}^2,
\end{align*}
we can replace $O(\|f\cos g-1\|_{L^1(\B)}^2)$ by $O(\|f-1\|_{L^1(\B)}^2+\|g\|_{L^2(\B)}^4)$. 

For the terms with sine, first we eliminate the $f$ factor. 

\begin{align*}
\langle  f\sin g1_{\B}, f\sin g1_{\B}*L_q\rangle&=\langle  (f-1+1)\sin g1_{\B}, (f-1+1)\sin g1_{\B}*L_q\rangle \\
&= \langle \sin g 1_{\B},\sin g1_{\B}*L_q\rangle +O(\|(f-1)\sin g\|_{L^1(\B)}\|f\sin g\|_{L^1(\B)})\\
&\le   \langle \sin g 1_{\B},\sin g1_{\B}*L_q\rangle   +O(\|f-1\|_{L^1(\B)}\|g\|_{L^2(\B)}). 
\end{align*}
Next, split the ball into $\B_g^\epsilon$ and $\B\setminus \B_g^\epsilon$. 

\begin{align*}
    \langle \sin g 1_{\B},\sin g1_{\B}*L_q\rangle&= \langle( \sin g1_{\B\setminus \B^\epsilon_g}+\sin g1_{\B^\epsilon_g}),( \sin g1_{\B\setminus \B^\epsilon_g}+\sin g 1_{\B^\epsilon_g})*L_q\rangle \\
    &\le  \langle \sin g1_{\B\setminus \B^\epsilon_g},\sin g1_{\B\setminus \B^\epsilon_g}*L_q\rangle +O(\|\sin g1_{\B_g^\epsilon}\|_1\|\sin g1_{\B}\|_1) \\
    &\le  \langle \sin g1_{\B\setminus \B^\epsilon_g},\sin g1_{\B\setminus \B^\epsilon_g}*L_q\rangle +O(|\B_g^\epsilon|^{1/2}\|g\|_{L^2(\B)}^2). 
\end{align*}
Together, we have

\begin{align*}
    -\frac{1}{4}q(q-2)&\langle f\sin g1_{\B}* f\sin g1_{\B},L_q\rangle  +\frac{1}{4}q^2\langle  f\sin g1_{\B}, f\sin g1_{\B}*L_q\rangle \\
    &\le -\frac{1}{4}q(q-2)\langle \sin g1_{\B\setminus \B_g^\epsilon}* \sin g1_{\B\setminus \B_g^\epsilon},L_q\rangle  +\frac{1}{4}q^2\langle  \sin g1_{\B\setminus \B_g^\epsilon}, \sin g1_{\B\setminus \B_g^\epsilon}*L_q\rangle\\
    &\quad O(|\B_g^\epsilon|^{1/2}\|g\|_{L^2(\B)}^2)+O(\|f-1\|_{L^1(\B)}\|g\|_{L^2(\B)}). 
\end{align*}

Finally, for the term with one cosine, recall $A_g=\{x\in \B:\cos g(x)\ge 0\}$. On the set $\B\setminus \B_g^\epsilon=\{x\in \B:|g(x)|\le \epsilon\}$ where $\epsilon<\pi/2$, we also have $\cos g> 0$, so $\B\setminus \B_g^\epsilon\subset A_g$. Calculate
\begin{align*}
    \langle K_q,f\cos g1_{\B}-1_{\B}\rangle&\le \langle K_q, f\cos g1_{A_g}-1_{\B}\rangle  \\
    &\le \langle K_q, \cos g1_{\B\setminus \B_g^\epsilon}-1_{\B\setminus \B_g^\epsilon}\rangle+\langle K_q,\cos g1_{A_g\cap \B_g^\epsilon}- 1_{A_g\cap \B_g^\epsilon}\rangle\\
    &\quad-\inf_{\B}K_q\cdot |\B_g^\epsilon\setminus A_g|\\
    &\le \langle K_q, \cos g1_{\B\setminus \B_g^\epsilon}-1_{\B\setminus \B_g^\epsilon}\rangle-\inf_{\B}K_q\cdot \left(\|\cos g-1\|_{L^1(A_g\cap \B_g^\epsilon)}+|\B_g^\epsilon\setminus A_g|\right).  
\end{align*}
Using the above analysis in (\ref{bread}), we have
\begin{align}
\|\widehat{fe^{ig}1_{\B}}\|_q^q  &\le  \|\widehat{1_{\B}}\|_q^q+q\langle K_q, \cos g1_{\B\setminus \B_g^\epsilon}-1_{\B\setminus \B_g^\epsilon}\rangle-q\inf_{\B}K_q\cdot \left(\|\cos g-1\|_{L^1(A\cap \B_g^\epsilon)}+|\B_g^\epsilon\setminus A|\right) \label{bread2}\\
    &-\frac{1}{4}q(q-2)\langle \sin g1_{\B\setminus \B_g^\epsilon}* \sin g1_{\B\setminus \B_g^\epsilon},L_q\rangle +\frac{1}{4}q^2\langle  \sin g1_{\B\setminus \B_g^\epsilon}, \sin g1_{\B\setminus \B_g^\epsilon}*L_q\rangle \nonumber \\
    &\quad O(|\B_g^\epsilon|^{1/2}\|g\|_{L^2(\B)}^2)+O(\|f-1\|_{L^1(\B)}\|g\|_{L^2(\B)}) \nonumber\\
    &+O(\|f-1\|_{L^1(\B)}^2+\|g\|_{L^2(\B)}^4)+O(\|fe^{ig}-1\|_{L^{q'}(\B)}^3). \nonumber
\end{align} 
Approximate the remaining trigonometric functions by the following Taylor expansions for $t\in\R$:

\[\sin t=t+O(t^3) \qquad \text{and}\qquad \cos t= 1-\frac{t^2}{2}+O(t^4). \]
This combined with the definition of $\B_g^\epsilon$ gives

\begin{align*}
q\langle &K_q, \cos g1_{B\setminus \B^\epsilon_g}-1_{B\setminus \B^\epsilon_g}\rangle -\frac{1}{4}q(q-2)\langle \sin g1_{\B\setminus \B^\epsilon_g}*\sin g1_{\B\setminus \B^\epsilon_g},L_q\rangle \\
    &\quad +\frac{1}{4}q^2\langle  \sin g1_{\B\setminus \B^\epsilon_g},\sin g1_{\B\setminus \B^\epsilon_g}*L_q\rangle\\
&=q\langle K_q, (1-g^2/2+O(g^4))1_{\B\setminus \B^\epsilon_g}-1_{\B\setminus \B^\epsilon_g}\rangle \\
    &\quad -\frac{1}{4}q(q-2)\langle (g+O(g^3))1_{\B\setminus \B^\epsilon_g}*(g+O(g^3))1_{\B\setminus \B^\epsilon_g},L_q\rangle \\
    &\quad +\frac{1}{4}q^2\langle  (g+O(g^3))1_{\B\setminus \B^\epsilon_g},(g+O(g^3))1_{\B\setminus \B^\epsilon_g}*L_q\rangle\\
&\le -\frac{q}{2}\langle K_q, g^21_{\B\setminus \B^\epsilon_g}\rangle+\epsilon^2O(\|g\|_{L^2(\B\setminus \B^\epsilon_g)}^2) -\frac{1}{4}q(q-2)\langle g1_{\B\setminus \B^\epsilon_g}*g1_{\B\setminus \B^\epsilon_g},L_q\rangle\\
    &\quad+\frac{1}{4}q^2\langle  g1_{\B\setminus \B^\epsilon_g},g1_{\B\setminus \B^\epsilon_g}*L_q\rangle+O(\|g\|_{L^2(\B)}^3)
\end{align*}

Finally, we note that since $q'<3/2$ and $\|fe^{ig}-1\|_{L^{q'}(\B)}\le \|f-1\|_{L^{q'}(\B)}+\|g\|_{L^2(\B)}(2|\B|)^{(2-q')/2q'}$, the error terms may be combined to 
\[ O(\|f-1\|_{L^2(\B)}^2+\|g\|_{L^2(\B)}^3+\|g\|_{L^2(\B)}^4+\|fe^{ig}-1\|_{L^{q'}(\B)}^3)\le  O(\|f-1\|_{L^2(\B)}^2+\|g\|_{L^2(\B)}^3). \]

\end{proof}

\end{subsection}

\begin{subsection}{Connection with a spectral problem.\label{specsec}}

In the previous section, for $fe^{ig}1_{\B}$ with $\|f-1\|_{L^1(\B)}$ and $\|g\|_{L^2(\B)}$ small, we expressed  $\|\widehat{fe^{ig}1_{\B}}\|_q^q$ as $\|\widehat{1_{\B}}\|_q^q$ plus a quadratic form in $g1_{\B\setminus\B_g^\epsilon}$ and a small error. In this section, we  analyze a spectral problem concerning that quadratic form when $q\ge 4$ is an even integer in order to obtain a more descriptive upper bound for $\|\widehat{fe^{ig}1_{\B}}\|_q^q$. 

\begin{definition} Define $T_q:L^2(\B)\to L^2(\B)$ to be the linear operator which is the composition of multiplication by $K_q^{-1/2}$, followed by  convolution with $L_q$, followed by multiplication by $K_q^{-1/2}$. That is, for $h\in L^2(\B)$,
\[ h\overset{T_q}\longmapsto \left.K_q^{-1/2}(K_q^{-1/2}h1_{\B}*L_q)\right|_{\B}.\]
\end{definition}

Observe that $T_q$ is bounded on $L^2(\B)$ since $K_q$ is bounded above and below by positive quantities on $\B$, so
\[ \|K_q^{-1/2}(K^{-1/2}h*L_q)\|_{L^2(\B)}\le \|K_q^{-1/2}\|^2_{L^\infty(\B)}\|L_q\|_{L^1(2\B)}\|h\|_{L^2(\B)} .\]

Using that $\widehat{1_{\B}}$ is a real-valued function that satisfies $\|\widehat{1_{\B}}(\xi)|\le C_d(1+|\xi|)^{-(d+1)/2}$ and that $\widehat{L_q}=|\widehat{1_{\B}}|^{q-2}$, we will show that $T_q$ is a compact operator. Since $K_q$ is bounded above and below by positive constants on $2\B$, multiplication by $K_q^{-1/2}$ defines a bounded operator on $L^2(\B)$ and therefore it suffices to show that convolution with $L_q$ is compact. But convolution with any continuous function, followed by restriction to the ball, defines a compact operator, so $T_q$ is compact.

Finally, since $K_q$ is real and $L_q$ is symmetric, 
\[ \langle T_qh,f\rangle = \langle K_q^{-1/2}h*L_q,K_q^{-1/2}h \rangle= \langle K_q^{-1/2}h,L_q*K_q^{-1/2}h \rangle=\langle h, T_qf\rangle, \]
so $T_q$ is self-adjoint. Let $Q_q$ be the quadratic form on $L^2(\B)$ defined by 
\[Q_q(f,h)=\langle f,T_qh\rangle\]
where $f,h\in L^2(\B)$. As above, $\tilde{f}(x)=f(-x)$. 

\begin{definition} \label{defH}Let $d\ge 1$ and $q\ge 4$ be an even integer. Let $\mc{H}$ denote the subspace of $L^2(\B)$ of functions of the form $K_q^{1/2}(x)(\a\cdot x+b)$ where $\a\in\R^d$ and $b\in\R$. Let $P_{\mc{H}}:L^2(\B)\to L^2(\B)$ denote the orthogonal projection onto $\mc{H}$. 
\end{definition} 

\begin{lemma}\label{speclem}Let $d\ge 1$. Let $q\ge 4$ be an even integer. Then there exists $c>0$ depending on the dimension and $q$ such that

\begin{align} -\frac{q}{2}\|h\|_{L^2(\B)}^2-\frac{1}{4}q(q-2)Q_q(h,\tilde{h}) +\frac{1}{4}q^2Q_q(h,h)\le -c\|(I-P_{\mc{H}})h\|^2_{L^2(\B)}\label{specineq}\end{align}
for every real-valued $h\in L^2(\B)$. 

\end{lemma}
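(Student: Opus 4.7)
The approach is to split by the parity of $h$ under $x\mapsto -x$ and reduce to two separate spectral problems for the compact self-adjoint operator $T_q$ on each parity sector. Write $h=h_e+h_o$ with $h_e(x)=\tfrac{1}{2}(h(x)+h(-x))$ and $h_o(x)=\tfrac{1}{2}(h(x)-h(-x))$. Since $K_q$ and $L_q$ are radial, $T_q$ preserves the parity decomposition, so $Q_q(h_e,h_o)=Q_q(h_e,\tilde h_o)=0$; combined with $\tilde h_e=h_e$ and $\tilde h_o=-h_o$, the left side of \eqref{specineq} collapses to
\begin{align*}
\tfrac{q}{2}\bigl[Q_q(h_e,h_e) - \|h_e\|^2\bigr] + \tfrac{q}{2}\bigl[(q-1)Q_q(h_o,h_o) - \|h_o\|^2\bigr].
\end{align*}
It therefore suffices to prove the two spectral bounds $Q_q(h_e,h_e)\le\|h_e\|^2$ and $(q-1)Q_q(h_o,h_o)\le\|h_o\|^2$, each with a positive gap on the orthogonal complement of the appropriate part of $\mc{H}$.

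For the even part, write $h_e=K_q^{1/2}\psi$ with $\psi$ even on $\B$. From $\widehat{K_q}=\widehat{1_\B}\,\widehat{L_q}$ one reads off $K_q=1_\B*L_q$, and a symmetrization in $(x,y)$ yields
\begin{align*}
\|h_e\|^2 - Q_q(h_e,h_e) = \tfrac{1}{2}\int_\B\!\!\int_\B L_q(x-y)\bigl(\psi(x)-\psi(y)\bigr)^2\,dy\,dx.
\end{align*}
Because $q$ is an even integer, $L_q$ equals a $(q-2)$-fold convolution of $1_\B$ with itself and so is nonnegative, and positive on a neighborhood of the origin by continuity. Hence this integral is $\ge 0$, vanishing only when $\psi$ is constant on $\B$, i.e.\ $h_e\in\operatorname{span}\{K_q^{1/2}\}\subset\mc{H}$. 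Compactness and self-adjointness of $T_q$ then make $1$ an isolated simple top eigenvalue on the even sector, upgrading the bound to $\|h_e\|^2 - Q_q(h_e,h_e)\ge c_e\|(I-P_{\mc{H}})h_e\|^2$ for some $c_e>0$.

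For the odd part, first locate an eigenspace at level $\tfrac{1}{q-1}$. Since $q$ is even, $|\widehat{1_\B}|^{q-2}=\widehat{1_\B}^{\,q-2}$ and $\widehat{K_q}=\widehat{1_\B}^{\,q-1}$; differentiating gives $\partial_j\widehat{K_q}=(q-1)\widehat{L_q}\,\partial_j\widehat{1_\B}$, which after inverse Fourier transform becomes
\begin{align*}
x_j K_q = (q-1)\,(x_j 1_\B)*L_q.
\end{align*}
Consequently $T_q(K_q^{1/2}x_j)=\tfrac{1}{q-1}K_q^{1/2}x_j$, so $\mc{H}_o:=\operatorname{span}\{K_q^{1/2}x_j\}_{j=1}^d$ lies in the $\tfrac{1}{q-1}$-eigenspace of $T_q$. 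Self-adjointness then reduces the required inequality to showing that the spectrum of $T_q$ on $\mc{H}_o^{\perp}$ within the odd sector lies strictly below $\tfrac{1}{q-1}$ with a uniform gap.

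This last step is the main obstacle. To handle it, I would exploit that $T_q$ commutes with all of $O(d)$ (since $K_q,L_q$ are radial), so it decomposes invariantly across spherical-harmonic sectors $V_\ell$ for odd $\ell$. On the sector $\ell=1$, spanned by functions $\alpha\cdot x\,\phi(|x|)$, the explicit eigenfunctions $K_q^{1/2}x_j$ together with a one-dimensional Sturm--Liouville-type analysis of the radial profile identify $\tfrac{1}{q-1}$ as the top eigenvalue with eigenspace exactly $\mc{H}_o$. On sectors $\ell\ge 3$, I would use the Rayleigh-quotient characterization together with the Fourier representation $\widehat{L_q}=\widehat{1_\B}^{\,q-2}$ and the orthogonality of distinct spherical harmonics to bound the top eigenvalue strictly below $\tfrac{1}{q-1}$ uniformly in $\ell$. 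Combining the even and odd gap estimates and using $\mc{H}=\operatorname{span}\{K_q^{1/2}\}\oplus\mc{H}_o$, the projection $(I-P_{\mc{H}})h$ decomposes by parity, and the two estimates assemble into \eqref{specineq} with $c=\tfrac{q}{2}\min\bigl(1-\mu_e^{(2)},\,\tfrac{1}{q-1}-\mu_o^{(2)}\bigr)>0$, where $\mu_e^{(2)}$ and $\mu_o^{(2)}$ denote the second-largest eigenvalues of $T_q$ on the even and odd sectors respectively.
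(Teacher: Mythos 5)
Your parity reduction is correct and cleanly computed: the cross terms do vanish, and the left-hand side of \eqref{specineq} does collapse to $\tfrac{q}{2}[Q_q(h_e,h_e)-\|h_e\|^2]+\tfrac{q}{2}[(q-1)Q_q(h_o,h_o)-\|h_o\|^2]$, so the lemma is equivalent to the two sector bounds you state. Your even-sector argument is also complete and correct: $K_q=1_\B*L_q$, the symmetrization identity, and the nonnegativity of $L_q$ (a $(q-2)$-fold convolution power of $1_\B$) genuinely prove $Q_q(h_e,h_e)\le\|h_e\|^2$ with equality exactly on $\operatorname{span}\{K_q^{1/2}\}$, and compactness upgrades this to a gap. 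Likewise your identity $x_jK_q=(q-1)(x_j1_\B)*L_q$ correctly exhibits $\mc{H}_o$ inside the $\tfrac{1}{q-1}$-eigenspace.

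However, there is a genuine gap at exactly the point you flag as "the main obstacle": you never prove that $\tfrac{1}{q-1}$ is the \emph{top} of the spectrum of $T_q$ on the odd sector, nor that its eigenspace there is exactly $\mc{H}_o$. Exhibiting $K_q^{1/2}x_j$ as eigenfunctions gives a lower bound on the top odd eigenvalue, not an upper bound, and an upper bound is the entire content of the odd-sector inequality $(q-1)Q_q(h_o,h_o)\le\|h_o\|^2$. The program you sketch (spherical-harmonic decomposition, a "Sturm--Liouville-type analysis" on $\ell=1$, Rayleigh-quotient bounds on $\ell\ge 3$) is not carried out, and it is not clear it can be: $T_q$ is an integral operator, not a Sturm--Liouville operator, and no nodal or Perron--Frobenius structure is established on the $\ell=1$ radial profiles; no mechanism is given for bounding the $\ell\ge3$ sectors strictly below $\tfrac{1}{q-1}$. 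The paper closes this gap by a completely different device: for each eigenfunction $\p$ with eigenvalue $\lambda$ it Taylor-expands $t\mapsto\|\widehat{e^{itK_q^{-1/2}\p}1_\B}\|_q^q$ and invokes the fact that $1_\B$ maximizes $\|\widehat{e^{ig}1_\B}\|_q$ for even $q$ (inequality \eqref{useful}); nonpositivity of the perturbation forces the $t^2$ coefficient to satisfy precisely the required spectral inequality, and the equality case is excluded off $\mc{H}$ via the pointwise bound \eqref{bd2} together with the classification of phase extremizers in Lemma \ref{extg}. Some such global variational input (or a genuine proof of your odd-sector spectral claim) is needed; without it your argument establishes only half of the lemma.
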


\begin{proof} Since $T_q$ is a compact, self-adjoint linear operator, we can write $L^2(\B)$ as a direct sum of eigenspaces. For a fixed eigenvalue , we can further orthogonally decompose the corresponding eigenspace into even eigenfunctions and odd eigenfunctions since $K_q=\tilde{K}_q$ and if $T_q\p=\lambda\p$, $1/2(\p+\tilde{\p})+1/2(\p-\tilde{\p})$ is a unique  representation of $\p$ as a sum of an even eigenvector with eigenvalue $\lambda$ and an odd eigenvector with eigenvalue $\lambda$. Since $T_q$ can be regarded as an operator on real-valued functions in $L^2(\B)$, we can assume that the eigenfunctions are real-valued. Thus we can expand $h$ as say $ h=\sum_{n=0}^\infty h_n$, where the $h_n$ are pairwise orthogonal eigenfunctions of $T_q$, either even or odd, real-valued, and associated with eigenvalues $\lambda_n$. The spectrum is real and $\lambda_n\to 0$ as $n\to\infty$. Assume that $|\lambda_n|$ is nonincreasing and calculate

\begin{align} 
-\frac{q}{2}\|h\|_{L^2(\B)}^2-&\frac{1}{4}q(q-2)Q_q(h,\tilde{h}) +\frac{1}{4}q^2Q_q(h,h)\nonumber\\
&= \sum_{n\le N}\left(-\frac{q}{2}\|h_n\|_{L^2(\B)}^2-\frac{1}{4}q(q-2)\lambda_n\langle h_n,\tilde{h_n}\rangle +\frac{1}{4}q^2\lambda_n\|h_n\|_2\right)\nonumber \\
&\quad +\sum_{n>N}\left(-\frac{q}{2}\|h_n\|_{L^2(\B)}^2-\frac{1}{4}q(q-2)\lambda_n\langle h_n,\tilde{h_n}\rangle +\frac{1}{4}q^2\lambda_n\|h_n\|_2^2\right) \nonumber\\
&\le  \sum_{n\le N}\left(-\frac{q}{2}\|h_n\|_{L^2(\B)}^2-\frac{1}{4}q(q-2)\lambda_n\langle h_n,\tilde{h_n}\rangle +\frac{1}{4}q^2\lambda_n\|h_n\|^2_2\right) \label{boing1} \\
&\quad +\left(-\frac{q}{2}+\frac{1}{2}q(q-1)|\lambda_N| \right)\sum_{n>N}\|h_n\|_{L^2(\B)}^2\nonumber 
\end{align}
We return to this expression (\ref{boing1}) after understanding the case for a single eigenfunction. Fix an eigenfunction $\p$ of $T_q$ with eigenvalue $\lambda$. We analyze

\begin{align} -\frac{q}{2}\|\p\|_{L^2(\B)}^2-\frac{1}{4}q(q-2)Q_q(\p,\tilde{\p}) +\frac{1}{4}q^2Q_q(\p,\p). \label{specphi}\end{align}

Note that since $K_q$ and $L_q$ are even functions, $T_q\tilde{\p}=\lambda\tilde{\p}$ as well. If $\lambda=0$, then (\ref{specineq}) for $h=\p$ is trivial since 
\begin{align*} 
-\frac{q}{2}\|\p\|_{L^2(\B)}^2&-\frac{1}{4}q(q-2)Q_q(\p,\tilde{\p}) +\frac{1}{4}q^2Q_q(\p,\p)= -\frac{q}{2}\|\p\|_{L^2(\B)}^2 \\ 
&\le -\frac{q}{2}\|(I-P_{\mc{H}})\p\|_{L^2(\B)}^2 .
\end{align*}
Thus we can assume that $\lambda\not=0$. In this case, 
\[ |\lambda||\p(x)|= |K_q^{-1/2}(x)(K^{-1/2}\p*L_q)(x)|\le \|K_q^{-1/2}\|^2_{L^\infty(\B)}\|\p\|_{L^2(\B)}\|L_q\|_{L^2(\B)} ,\]
so $\|\p\|_{L^\infty(\B)}$ is finite. Following \cite{c1} and \cite{N1}, (\ref{specphi}) is analyzed for an eigenfunction $\p$ by considering a Taylor expansion of $\|\widehat{e^{it\s}}\|_q^q$ where $\s=K_q^{-1/2}\p $ and $t\in\R$ is an auxiliary parameter. Choose $t>0$ sufficiently small so that $\cos t\s\ge 0$ on $\B$ and the hypotheses of Lemma \ref{Taylorfreq} are satisfied with $f=1$ and $g=t\s$. Executing the proof of Lemma \ref{Taylorfreq} without expanding the term $\langle K_q,(f\cos(g)-1)1_{\B}\rangle$ yields the following equality:

\begin{align*}
\|\widehat{e^{it\s}1_{\B}}\|_q^q  &=  \|\widehat{1_{\B}}\|_q^q+q\langle K_q, (\cos(t\s)-1)1_{\B}\rangle -\frac{1}{4}q(q-2)\langle t\s1_{\B\setminus \B^\epsilon_{t\s}}*t\s1_{\B\setminus \B^\epsilon_{t\s}},L_q\rangle\\
    & +\frac{1}{4}q^2\langle  t\s1_{\B\setminus \B^\epsilon_{t\s}},t\s1_{\B\setminus \B^\epsilon_{t\s}}*L_q\rangle + O(|\B_{t\s}^\epsilon|^{1/2}t^2\|\s\|_{L^2(\B)}^2+t^3\|\s \|_{L^2(\B)}^3).
\end{align*} 
\noindent where $\epsilon\in(0,\pi/2)$ and $\B_{t\s}^\epsilon=\{x\in\B:t|K^{-1/2}(x)\p(x)|>\epsilon\}$. Since $\|\p\|_{L^\infty(\B)}<\infty$, for $t<\epsilon(\|K^{-1/2}\p\|_{L^\infty(\B)})^{-1}$, the set  $\B_{t\s}^\epsilon$ is empty. Thus the statement we have from the proof of Lemma \ref{Taylorfreq} for $t<\epsilon(\|K^{-1/2}\p\|_{L^\infty(\B)})^{-1}$ is

\begin{align*} 
\|\widehat{e^{it\s}1_{\B}}\|_q^q&= \|\widehat{1_{\B}}\|_q^q-q\langle K_q, (\cos(t\s)-1)1_{\B}\rangle -\frac{1}{4}q(q-2)\langle t\s1_{\B}*t\s1_{\B},L_q\rangle \\
    &+\frac{1}{4}q^2\langle  t\s1_{\B},t\s1_{\B}*L_q\rangle +O(t^3\|\s\|_{L^2(\B)}^3).  
\end{align*}
Now if we expand the cosine, we have the equality

\begin{align} 
\|\widehat{e^{it\s}1_{\B}}\|_q^q&= \|\widehat{1_{\B}}\|_q^q-\frac{q}{2}\langle K_q, (t\s)^21_{\B}\rangle+t^4O(\|\s\|_{L^\infty(\B)}^2\|\s\|_{L^2(\B)}^2) -\frac{1}{4}q(q-2)\langle t\s1_{\B}*t\s1_{\B},L_q\rangle \nonumber \\
    &+\frac{1}{4}q^2\langle  t\s1_{\B},t\s1_{\B}*L_q\rangle +O(t^3\|\s\|_{L^2(\B)}^3)  \nonumber\\
    &=\|\widehat{1_{\B}}\|_q^q-t^2\left[\frac{q}{2}\langle K_q, (\s)^21_{\B}\rangle-\frac{1}{4}q(q-2)\langle \s1_{\B}*\s1_{\B},L_q\rangle +\frac{1}{4}q^2\langle  \s1_{\B},\s1_{\B}*L_q\rangle\right] +O_{\p}(t^3)  \nonumber\\
    &=\|\widehat{1_{\B}}\|_q^q-t^2\left[\frac{q}{2}\|\p\|_{L^2(\B)}^2-\frac{1}{4}q(q-2)Q_q(\p,\tilde{\p}) +\frac{1}{4}q^2Q_q(\p,\p)\right] +O_{\p}(t^3) \nonumber\\
    &=\|\widehat{1_{\B}}\|_q^q-t^2\left[\frac{q}{2}\|\p\|_{L^2(\B)}^2-\frac{\lambda}{4}q(q-2)\langle\p,\tilde{\p}\rangle +\frac{\lambda}{4}q^2\|\p\|_{L^2(\B)}^2 \right] +O_{\p}(t^3) \label{bd1},
\end{align}
\noindent where the final big-$O_\p$ depends on the dimension, the exponent $q$, and on the $L^\infty$ and $L^2$ norms of $\p$. Note that we used that $|\s|$ is bounded above and below by a constant (depending on $q$) multiple of $\p$ on $\B$. Since $q$ is an even integer, $\|\widehat{e^{it\s}1_{\B}}\|_q^q\le \|\widehat{1_{\B}}\|_q^q$ and so
\begin{align}\frac{q}{2}\|\p\|_{L^2(\B)}^2-\frac{\lambda}{4}q(q-2)\langle\p,\tilde{\p}\rangle +\frac{\lambda}{4}q^2\|\p\|_{L^2(\B)}^2 \ge 0 \label{assump2}\end{align}
from (\ref{bd1}). Expressing $q=2m$, we can also write

\begin{align*} 
&\|\vwidehat{e^{it\s}1_{\B}}\|_q^q=\textrm{Re}\int_{\B^{q-1}} e^{it(\psi(x_1)+\cdots +\psi(x_m)-\psi(y_2)-\cdots -\psi(y_m)-\psi(L(x,y)))}1_{\B}(L(x,y)) dxdy 
\end{align*}
\noindent where $x=(x_1,\ldots,x_m)$, $y=(y_2,\ldots,y_m)$, and $L(x,y)=x_1+\cdots +x_m-y_2-\cdots y_m$. 
Let $\a(x,y)=\s(x_3)+\cdots +\psi(x_m)-\psi(y_2)-\cdots -\psi(y_m)-\psi(L(x,y))$ . Then for all sufficiently small $t$, since $\cos(\theta)-1\le -\frac{\theta^2}{4}$ for $|\theta|\le\theta/2$,

\begin{align}
&\|\vwidehat{e^{it\s}1_{\B}}\|_q^q=\|\widehat{1_{\B}}\|_q^q+\|\vwidehat{e^{it\s}1_{\B}}\|_q^q-\|\widehat{1_{\B}}\|_q^q \nonumber   \nonumber\\
&=\|\widehat{1_{\B}}\|_q^q -\int_{B^{q-1}} |\cos(t(\psi(x_1)+\s(x_2)+\a(x,y)))-1|1_{\B}(L(x,y)) dxdy    \nonumber\\
&\le \|\widehat{1_{\B}}\|_q^q-\frac{t^2}{4}\int_{B^{q-1}} (\psi(x_1)+\s(x_2)+\a(x,y))^21_{\B}(L(x,y)) dxdy \label{bd2}.  
\end{align}

Combining (\ref{bd1}) with (\ref{bd2}) gives

\begin{align*}
\frac{t^2}{4} \int_{B^{q-1}} (\psi(x_1)+\s(x_2)&+\a(x,y))^21_{\B}(L(x,y)) dxdy\\
&\quad\le  t^2\left[\frac{q}{2}\|\p\|_{L^2(\B)}^2-\frac{\lambda}{4}q(q-2)\langle\p,\tilde{\p}\rangle +\frac{\lambda}{4}q^2\|\p\|_{L^2(\B)}^2 \right] +O_{\p}(t^3)  
\end{align*}
for all sufficiently small $t>0$. If the coefficient of $t^2$ on the right hand side is 0, then
\[ \int_{B^{q-1}} (\psi(x_1)+\s(x_2)+\a(x,y))^21_{\B}(L(x,y)) dxdy=0  ,  \]
which means that 

\begin{align}
&\|\vwidehat{e^{it\s}1_{\B}}\|_q^q=\|\widehat{1_{\B}}\|_q^q -\int_{B^{q-1}} |\cos(t(\psi(x_1)+\s(x_2)+\a(x,y)))-1|1_{\B}(L(x,y)) dxdy =\|\widehat{1_{\B}}\|_q^q.  \nonumber
\end{align}
By Lemma \ref{extg}, $e^{it\s}=e^{i(\a\cdot x+b)}$ for some $\a\in\R^d$ and $b\in\R$. Thus the inequality (\ref{assump2}) is strict unless $e^{iK^{-1/2}\p}$ takes the form $e^{i(\a\cdot x+b)}$. Using that $\lambda$ is nonzero, we have for each $x\in\B$ the expression 
\[  \p(x)= \lambda^{-1} K_q^{-1/2}(x)(K_q^{-1/2}\p*L_q)(x) .\]
Since $K_q^{-1/2}$ is continuous and $L_q\in L^2(\R^d)$, $\p$ is continuous on $\B$. Note that  $e^{iK_q^{-1/2}\p}=e^{i(\a\cdot x+b)}$ implies that $K_q^{-1/2}\p(x)=\a\cdot x+b+f(x)$ for some function $f:\B\to2\pi\Z$. The only continuous such function is constant, so $\p(x)=K_q^{1/2}(\a\cdot x+b')$ for $b'=b+2\pi n$ for some $n\in\Z$. Conclude that the inequality (\ref{assump2}) is strict unless $\p\in\mc{H}$, where $\mc{H}$ was defined in Definition \ref{defH}.

Finally, we use this in (\ref{boing1}) and conclude that 
there exists $c>0$ so that

\begin{align*} 
-\frac{q}{2}\|h\|_{L^2(\B)}^2-&\frac{1}{4}q(q-2)Q_q(h,\tilde{h}) +\frac{1}{4}q^2Q_q(h,h)\nonumber\\
&\le -c \sum_{\substack{n\le N\\h_n\not\in \mc{H}}}\|h_n\|_{L^2(\B)}^2-c\sum_{n>N}\|h_n\|_{L^2(\B)}^2\\
&\le -c\|(I-P_{\mc{H}})h\|_{L^2(\B)}^2. 
\end{align*}

\end{proof}

%OK! This is the step where we need to choose an $L$ to subtract off so that we only consider eigenfunctions with $\langle\p,\a\cdot xK^{1/2}+\b K^{1/2}\rangle=0$ for all $\a\in\R^d$, $\b\in\R$. 

%Note that $K^{1/2}x_i$ and $K^{1/2}$ are orthogonal eigenfunctions of $T$. The orthogonality is clear using the fact that $K_q$ is even, but they are eigenfunctions since 

%\[TK^{1/2}=K^{-1/2}(1_B*L)=K^{1/2}\]

%and

%\begin{align*}
%    TK^{1/2}x_j&= K^{-1/2}(x_j1_B*L)= K_q^{-1/2}(q-1)^{-1}x_jK_q=\frac{1}{q-1}x_jK^{1/2}
%\end{align*}
%Note that $\widehat{1_B}$ is a real-analytic function which is always positive and that
%\[ \vwidehat{x_j1_B*L}=\frac{-1}{2\pi i}(\partial_j\widehat{1_B}) \widehat{L_q}= \frac{-1}{2\pi i}(\partial_j\widehat{1_B}) |\widehat{1_B}|^{q-2}=\frac{-1}{2\pi i}(q-1)^{-1}\partial_j(\widehat{1_B})^{q-1}=(q-1)^{-1}\widehat{x_j(1_B)^{q-1}}. \]

%As a check: 

%\[ -\frac{q}{2} -\frac{1}{4}q(q-2) +\frac{1}{4}q^2 =0\]

%\begin{align*}  
%-\frac{q}{2} -\frac{1}{4}q(q-2)(q-1)^{-1} +\frac{1}{4}q^2(q-1)^{-1}&=\frac{-2q(q-1)+q(q-2)+q^2}{4(q-1)}\\
%    &= \frac{-2q^2+2q+q^2-2q+q^2}{4(q-1)}\\
%    &= 0
%\end{align*} 

\end{subsection}

\subsection{Conclusion of the spectral analysis for $q$ near an even integer and $E=\B$}
Let $K_q,L_q$ be the functions defined in (\ref{Kq}) and (\ref{Lq}). Use the frequency Taylor expansion from Lemma \ref{Taylorfreq} and compare the main terms with $K_q$ and $L_q$ to analogous terms with $K_{\overline{q}}$ and $L_{\overline{q}}$ where $\overline{q}$ is the closest even integer. Then make use of the spectral analysis in Lemma \ref{speclem} to obtain the following theorem. We will use the following theorem in the proof of Proposition \ref{support} in \textsection\ref{supportsec} and the proof of Proposition \ref{freq} in \textsection\ref{freq2}.

\begin{theorem} \label{Taylorfreqcor}Let $d\ge 1$ and let $\overline{q}\ge 4$ be an even integer. There exist $\delta_0,\rho, >0$ all depending on the dimension and $\overline{q}$ as well as $c_{q,d}>0$ so that the following holds. Let $q\in(3,\infty)$, $E\subset\R^d$ be a Lebesgue measurable set with $|E|\le |\B|$, $0\le f\le 1$, and $g$ be real valued. If $|q-\overline{q}|<\rho$, $\|f-1\|_{L^1(\B)}\le \delta_0$, $\|g\|_{L^2(\B)}\le \delta_0$, and $|g|\le\frac{5\pi}{4}$, then 

\begin{align*}
\|\widehat{fe^{ig}1_{\B}}\|_q^q&\le  \|\widehat{1_{\B}}\|_q^q -c_{q,d}\|(I-P_{\mc{H}})K_{\overline{q}}^{1/2}g\|_{L^2(\B)}^2  +o_{q-\overline{q}}(1)\|g\|_{L^2(\B)}^2 \nonumber \\ 
    &\quad+O(\|f-1\|_{L^1(\B)}\|g\|_{L^2(\B)})+O(\|f-1\|_{L^1(\B)}^2+\|g\|_{L^2(\B)}^{5/2}) . \nonumber
\end{align*} 
\end{theorem}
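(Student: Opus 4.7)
The plan is to combine the Taylor expansion of Lemma \ref{Taylorfreq} at exponent $q$ with the spectral inequality Lemma \ref{speclem} at the nearby even exponent $\overline{q}$, transferring between them via the continuity of $q \mapsto (K_q, L_q)$ supplied by Lemma \ref{Kqlem2}. First I apply Lemma \ref{Taylorfreq} with a threshold $\epsilon \in (0,\pi/2)$ to be chosen at the end. The output is an upper bound on $\|\widehat{fe^{ig}1_\B}\|_q^q - \|\widehat{1_\B}\|_q^q$ consisting of (i) the negative \emph{bad-set} contribution $-q\inf_\B K_q\bigl(\|\cos g - 1\|_{L^1(A_g \cap \B_g^\epsilon)} + |\B_g^\epsilon \setminus A_g|\bigr)$, (ii) the quadratic expression
\[\mathcal{Q}_q(g 1_{\B \setminus \B_g^\epsilon}) := -\tfrac{q}{2}\langle K_q, g^2 1_{\B \setminus \B_g^\epsilon}\rangle - \tfrac{q(q-2)}{4}\langle g 1_{\B \setminus \B_g^\epsilon} * g 1_{\B \setminus \B_g^\epsilon}, L_q\rangle + \tfrac{q^2}{4} \langle g 1_{\B \setminus \B_g^\epsilon}, g 1_{\B \setminus \B_g^\epsilon} * L_q\rangle,\]
and (iii) error terms of size $\epsilon^2 \|g\|_{L^2(\B)}^2 + |\B_g^\epsilon|^{1/2}\|g\|_{L^2(\B)}^2 + \|f-1\|_{L^1(\B)}\|g\|_{L^2(\B)} + \|f-1\|_{L^1(\B)}^2 + \|g\|_{L^2(\B)}^3$.

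For term (i), the pointwise bound $|g| \le 5\pi/4$ forces a clean dichotomy on $\B_g^\epsilon$: on $A_g \cap \B_g^\epsilon$ one has $\epsilon < |g| \le \pi/2$ so that $1 - \cos g \ge \tfrac{2}{\pi^2} g^2$, while on $\B_g^\epsilon \setminus A_g$ one has $\pi/2 < |g| \le 5\pi/4$ so that the indicator alone majorizes $g^2$ up to a constant. Combined with the positive lower bound on $\inf_\B K_q$ from Lemma \ref{Kqprop} and the $L^\infty$ boundedness of $K_{\overline{q}}^{1/2}$ on $\B$, these yield
\[-q\inf_\B K_q\bigl(\|\cos g - 1\|_{L^1(A_g \cap \B_g^\epsilon)} + |\B_g^\epsilon \setminus A_g|\bigr) \le -c_1\|K_{\overline{q}}^{1/2} g 1_{\B_g^\epsilon}\|_{L^2(\B)}^2.\]
For term (ii), Lemma \ref{Kqlem2} lets me replace each occurrence of $K_q, L_q$ by $K_{\overline{q}}, L_{\overline{q}}$ and each rational coefficient in $q$ by its value at $\overline{q}$, since $\|K_q - K_{\overline{q}}\|_{L^\infty(\B)}$ and $\|L_q - L_{\overline{q}}\|_{L^\infty(2\B)}$ are both $o_{q-\overline{q}}(1)$. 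All functions are supported in $\B$ and the convolutions live in $2\B$ where these $L^\infty$ controls apply, so Young's inequality forces each substitution to cost at most $o_{q-\overline{q}}(1)\|g\|_{L^2(\B)}^2$. Setting $h := K_{\overline{q}}^{1/2} g 1_{\B \setminus \B_g^\epsilon} \in L^2(\B)$, a direct computation using $T_{\overline{q}}$ and the identity $\langle u * u, L_{\overline{q}}\rangle = \langle u, \tilde{u} * L_{\overline{q}}\rangle$ (valid since $L_{\overline{q}}$ is even) shows that the transformed expression equals the left-hand side of (\ref{specineq}) at exponent $\overline{q}$ exactly, so Lemma \ref{speclem} yields the bound $-c_2\|(I - P_{\mc{H}})h\|_{L^2(\B)}^2$.

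To upgrade from $h$ to $K_{\overline{q}}^{1/2} g$, I write $(I - P_{\mc{H}}) K_{\overline{q}}^{1/2} g = (I - P_{\mc{H}}) h + (I - P_{\mc{H}}) K_{\overline{q}}^{1/2} g 1_{\B_g^\epsilon}$, and the triangle inequality gives
\[-c_2\|(I - P_{\mc{H}})h\|_{L^2(\B)}^2 \le -\tfrac{c_2}{2}\|(I - P_{\mc{H}}) K_{\overline{q}}^{1/2} g\|_{L^2(\B)}^2 + c_2 \|K_{\overline{q}}^{1/2} g 1_{\B_g^\epsilon}\|_{L^2(\B)}^2,\]
whose last term is absorbed into the gain from (i) provided $\delta_0$ is small enough (i.e., $c_1 > c_2$ up to a constant). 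To close the error budget I choose $\epsilon := \|g\|_{L^2(\B)}^{1/2}$: then $\epsilon^2\|g\|_{L^2(\B)}^2 = \|g\|_{L^2(\B)}^3$, and Chebyshev gives $|\B_g^\epsilon|^{1/2} \le \epsilon^{-1}\|g\|_{L^2(\B)}$, whence $|\B_g^\epsilon|^{1/2}\|g\|_{L^2(\B)}^2 \le \|g\|_{L^2(\B)}^{5/2}$; both are dominated by $\|g\|_{L^2(\B)}^{5/2}$ for $\|g\|_{L^2(\B)} \le 1$, yielding exactly the error terms in the statement. The main obstacle is the bookkeeping in the continuity step: one must verify that cross-differences such as $\langle g 1_{\B \setminus \B_g^\epsilon} * g 1_{\B \setminus \B_g^\epsilon}, L_q - L_{\overline{q}}\rangle$ and $\langle g 1_{\B \setminus \B_g^\epsilon}, g 1_{\B \setminus \B_g^\epsilon} * (L_q - L_{\overline{q}})\rangle$ remain $o_{q-\overline{q}}(1)\|g\|_{L^2(\B)}^2$ \emph{uniformly} in the $g$-dependent cut-off set $\B_g^\epsilon$, which works precisely because the relevant $L^\infty$ bounds on $L_q - L_{\overline{q}}$ hold on the fixed compact set $2\B$ independently of the cut-off.
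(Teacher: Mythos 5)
Your proposal is correct and follows essentially the same route as the paper: apply Lemma \ref{Taylorfreq}, transfer the quadratic form from $(K_q,L_q)$ to $(K_{\overline{q}},L_{\overline{q}})$ via Lemma \ref{Kqlem2} at cost $o_{q-\overline{q}}(1)\|g\|_{L^2(\B)}^2$, invoke Lemma \ref{speclem} with $h=K_{\overline{q}}^{1/2}g1_{\B\setminus\B_g^\epsilon}$, use the pointwise bound $|g|\le\tfrac{5\pi}{4}$ to convert the bad-set terms into $-c_1\|K_{\overline{q}}^{1/2}g1_{\B_g^\epsilon}\|_{L^2(\B)}^2$, and choose $\epsilon=\|g\|_{L^2(\B)}^{1/2}$. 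Your absorption step (splitting $(I-P_{\mc{H}})K_{\overline{q}}^{1/2}g$ and absorbing the $\B_g^\epsilon$ piece into the bad-set gain) is an equivalent rearrangement of the paper's inequality (\ref{fra2}), which instead sums the two squared norms and recognizes the result as dominating $\|K_{\overline{q}}^{1/2}g-P_{\mc{H}}(K_{\overline{q}}^{1/2}g)\|_{L^2(\B)}^2$.
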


\begin{proof} The function $fe^{ig}1_{\B}$ satisfies the hypotheses of Lemma \ref{Taylorfreq}. We have the expansion

\begin{align}
\|\widehat{fe^{ig}1_{\B}}\|_q^q  &\le  \|\widehat{1_{\B}}\|_q^q-q\inf_{\B}K_q\cdot \left(\|\cos g-1\|_{L^1(A_g\cap \B_g^\epsilon)}+|\B_g^\epsilon\setminus A_g|\right)  \label{fra1} \\
    &-\frac{q}{2}\langle K_q, g^21_{\B\setminus \B^\epsilon_g}\rangle -\frac{1}{4}q(q-2)\langle g1_{\B\setminus \B^\epsilon_g}*g1_{\B\setminus \B^\epsilon_g},L_q\rangle+\frac{1}{4}q^2\langle  g1_{\B\setminus \B^\epsilon_g},g1_{\B\setminus \B^\epsilon_g}*L_q\rangle\nonumber \\
    & +\epsilon^2O(\|g\|_{L^2(\B\setminus \B^\epsilon_g)}^2)+ O(|\B_g^\epsilon|^{1/2}\|g\|_{L^2(\B)}^2)+O(\|f-1\|_{L^1(\B)}\|g\|_{L^2(\B)}) \nonumber\\
    &+O(\|f-1\|_{L^1(\B)}^2+\|g\|_{L^2(\B)}^3) \nonumber
\end{align}

We analyze the three main terms in the expansion:

\begin{align} 
-\frac{q}{2}\langle K_q, &g^21_{\B\setminus \B^\epsilon_g}\rangle -\frac{1}{4}q(q-2)\langle g1_{\B\setminus \B^\epsilon_g}*g1_{\B\setminus \B^\epsilon_g},L_q \rangle +\frac{1}{4}q^2\langle  g1_{\B\setminus \B^\epsilon_g},g1_{\B\setminus \B^\epsilon_g}*L_q\rangle \nonumber\\
&=-\frac{q}{2}\langle K_q-K_{\overline{q}}+K_{\overline{q}}, g^21_{\B\setminus \B^\epsilon_g}\rangle -\frac{1}{4}q(q-2)\langle g1_{\B\setminus \B^\epsilon_g}*g1_{\B\setminus \B^\epsilon_g},L_q-L_{\overline{q}}+L_{\overline{q}} \rangle \nonumber\\
    &\quad +\frac{1}{4}q^2\langle  g1_{\B\setminus \B^\epsilon_g},g1_{\B\setminus \B^\epsilon_g}*(L_q-L_{\overline{q}}+L_{\overline{q}})\rangle \nonumber\\
&\le -c_{\overline{q},d} \|(I-P_{\mc{H}})K_{\overline{q}}^{1/2}g1_{\B\setminus\B_g^\epsilon}\|_{L^2(\B)}^2+o_{q-\overline{q}}(1)\|g\|_{L^2(\B\setminus \B_g^\epsilon)}^2  \label{d4} 
\end{align}
where we use Lemma \ref{speclem} and Lemma \ref{Kqlem2} in (\ref{d4}). Using this in (\ref{fra1}) gives

\begin{align}
\|\widehat{fe^{ig}1_{\B}}\|_q^q  &\le  \|\widehat{1_{\B}}\|_q^q-q\inf_{\B}K_q\cdot \left(\|\cos g-1\|_{L^1(A_g\cap \B_g^\epsilon)}+|\B_g^\epsilon\setminus A_g|\right)  \label{fra*1} \\
    &-c_{\overline{q},d}\|(I-P_{\mc{H}})K_{\overline{q}}^{1/2}g1_{\B\setminus \B_g^\epsilon}\|_{L^2(\B)}^2 +o_{q-\overline{q}}(1)\|g\|_{L^2(\B\setminus\B_g^\epsilon)}^2 \nonumber \\
    & +\epsilon^2O(\|g\|_{L^2(\B\setminus \B^\epsilon_g)}^2)+ O(|\B_g^\epsilon|^{1/2}\|g\|_{L^2(\B)}^2)+O(\|f-1\|_{L^1(\B)}\|g\|_{L^2(\B)}) \nonumber\\
    &+O(\|f-1\|_{L^1(\B)}^2+\|g\|_{L^2(\B)}^3) \nonumber.
\end{align}

Since $|g|\le \frac{5\pi}{4}$, we can combine the $ \|(I-P_{\mc{H}})K_{\overline{q}}^{1/2}g1_{\B\setminus\B_g^\epsilon}\|_{L^2(\B)}^2$ above with the other negative term above as follows. Choose $c_0>0$ so that $1-\cos \theta\ge c_0\theta^2$ for $|\theta|\le \frac{5\pi}{4}$. Then 

\begin{align}
    \|\cos g-1\|_{L^1(A_g\cap \B_g^\epsilon)}&+|\B_g^\epsilon\setminus A_g|+ \|(I-P_{\mc{H}})K_{\overline{q}}^{1/2}g1_{\B\setminus \B^\epsilon_g}\|_{L^2(\B)}^2 \ge  c_0\|g\|_{L^2(A_g\cap \B_g^\epsilon)}^2 \nonumber\\
    &\qquad +\frac{16}{25\pi^2}\|g\|_{L^2(\B_g^\epsilon\setminus A_g)}^2+ \|(I-P_{\mc{H}})K_{\overline{q}}^{1/2}g1_{\B\setminus \B^\epsilon_g}\|_{L^2(\B)}^2\nonumber \\
    &\ge  c_0\|K_{\overline{q}}\|_{L^\infty(\B)}^{-1}\|K_{\overline{q}}^{1/2}g\|_{L^2(A_g\cap \B_g^\epsilon)}^2 \nonumber +\frac{16}{25\pi^2}\|K_{\overline{q}}\|_{L^\infty(\B)}^{-1}\|K_{\overline{q}}^{1/2}g\|_{L^2(\B_g^\epsilon\setminus A_g)}^2\\
    &\qquad+ \|(I-P_{\mc{H}})K_{\overline{q}}^{1/2}g1_{\B\setminus \B^\epsilon_g}\|_{L^2(\B)}^2\nonumber \\
    &\ge C_0 \|K_{\overline{q}}^{1/2}g1_{\B_g^\epsilon}+(I-P_{\mc{H}})K_{\overline{q}}^{1/2}g1_{\B\setminus \B^\epsilon_g}\|_{L^2(\B)}^2 \nonumber\\
&= C_0 \|K_{\overline{q}}^{1/2}g-P_{\mc{H}}(K_{\overline{q}}^{1/2}g1_{\B\setminus \B^\epsilon_g})\|_{L^2(\B)}^2 \ge  C_0 \|K_{\overline{q}}^{1/2}g-P_{\mc{H}}(K_{\overline{q}}^{1/2}g)\|_{L^2(\B)}^2 \label{fra2}
\end{align}
for an appropriate constant $C_0>0$. So we have for another constant $c>0$

\begin{align}
\|\widehat{fe^{ig}1_{\B}}\|_q^q&\le \|\widehat{1_{\B}}\|_q^q-c \|(I-P_{\mc{H}})K_{\overline{q}}^{1/2}g\|_{L^2(\B)}^2 \label{fra*2} \\
    &+o_{q-\overline{q}}(1)\|g\|_{L^2(\B\setminus\B_g^\epsilon)}^2  +\epsilon^2O(\|g\|_{L^2(\B\setminus \B^\epsilon_g)}^2)\nonumber \\
    &+ O(|\B_g^\epsilon|^{1/2}\|g\|_{L^2(\B)}^2)+O(\|f-1\|_{L^1(\B)}\|g\|_{L^2(\B)}) \nonumber\\
    &+O(\|f-1\|_{L^1(\B)}^2+\|g\|_{L^2(\B)}^3) \nonumber.
\end{align}

Use the bound $|\B_g^\epsilon|^{1/2}\le \epsilon^{-1}\|g\|_{L^2(\B)}$ and choose $\epsilon=\|g\|_{L^2(\B)}^{1/2} $ to simplify the above to 

\begin{align}
\|\widehat{fe^{ig}1_{\B}}\|_q^q&\le  \|\widehat{1_{\B}}\|_q^q -c\|(I-P_{\mc{H}})K_{\overline{q}}^{1/2}g\|_{L^2(\B)}^2  \label{s5*}+o_{q-\overline{q}}(1)\|g\|_{L^2(\B)}^2 \\ 
    &\quad+O(\|f-1\|_{L^1(\B)}\|g\|_{L^2(\B)})+O(\|f-1\|_{L^1(\B)}^2+\|g\|_{L^2(\B)}^{5/2}) . \nonumber
\end{align}

\end{proof}

\section{Mostly support variation: $MN|E\Delta\B|\ge \max(N\|g\|_{L^2(E)},\|f-1\|_{L^1(E)}^{1/2})$\label{supportsec}}

Let $K_q,L_q$ be the functions defined in (\ref{Kq}) and (\ref{Lq}). We employ the more detailed Taylor expansion from Lemma \ref{Taylorfg} in this section.

\begin{proposition}\label{support}
Let $d\ge 1$ and let ${\overline{q}}\ge 4$ be an even integer and $M,N\in\R^+$. There exists $\delta_0=\delta_0({\overline{q}},d,M,N)>0$ and $\rho=\rho(\delta_0,{\overline{q}},M,N)>0$ such that the following holds. Let $q\in(3,\infty)$, $E\subset\R^d$ be a Lebesgue measurable set with $|E|\le |\B|$, $0\le f\le 1$, and $g$ be real valued. Suppose that  $\|f-1\|_{L^1(\B)}\le\delta_0$,  $\|g\|_{L^2(E)}\le \delta_0$, $|E\Delta\B|\le2 \text{dist}(E,\mathfrak{E})\le\delta_0$, and $|q-{\overline{q}}|\le\rho$. If
\[ MN|E\Delta\B|\ge \max(N\|g\|_{L^2(E)},\|f-1\|_{L^1(E)}^{1/2}), \]
then 
\begin{align*}
\|\widehat{fe^{ig}1_{\B}}\|_q^q&\le  \|\widehat{1_{\B}}\|_q^q-c_{q,d} \text{dist}(E,\mathfrak{E})^2
\end{align*} 
for a constant $c_{q,d}>0$ depending only on the exponent $q$ and on the dimension. 
\end{proposition}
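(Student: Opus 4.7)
The plan is to start from the decomposition in Lemma \ref{Taylorfg},
\begin{align*}
\|\widehat{fe^{ig}1_E}\|_q^q &= \|\widehat{1_E}\|_q^q + q\langle K_q,\, f\cos g\,1_{E\setminus\B}-1_{E\setminus\B}\rangle \\
&\quad -\|\widehat{1_\B}\|_q^q + \|\widehat{f'e^{ig'}1_\B}\|_q^q + R_q(f,g,E),
\end{align*}
where $f',g'$ are the $\B$-restrictions defined there and $R_q$ collects the big-$O$ error. The strategy is to bound each piece separately so that an overall $-c_{q,d}\,\text{dist}(E,\mathfrak{E})^2$ emerges, with all positive corrections controlled by $|E\Delta\B|^2\le 4\,\text{dist}(E,\mathfrak{E})^2$.

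First, by Christ's Theorem \ref{christmain} applied to $q$ near $\overline q$, together with the scaling identity $\|\widehat{1_E}\|_q^q = |E|^{q/p}\bigl(\|\widehat{1_E}\|_q/|E|^{1/p}\bigr)^q$ and the hypothesis $|E|\le|\B|$, I would deduce
\[
\|\widehat{1_E}\|_q^q \;\le\; \|\widehat{1_\B}\|_q^q - c_1\,\text{dist}(E,\mathfrak{E})^2,
\]
where $c_1=c_1(q,d)>0$ is a fixed multiple of Christ's constant $\tilde c_{q,d}$, valid once $\delta_0$ is small enough that $|E|$ stays bounded below by, say, $|\B|/2$. For the middle inner product I would use the pointwise bound $|f\cos g-1|\le|f-1|+g^2/2$ (which follows from writing $f\cos g-1=\cos g(f-1)+(\cos g-1)$) to get
\[
\bigl|q\langle K_q,\,f\cos g\,1_{E\setminus\B}-1_{E\setminus\B}\rangle\bigr|\;\le\; q\|K_q\|_\infty\bigl(\|f-1\|_{L^1(E)}+\tfrac12\|g\|_{L^2(E)}^2\bigr).
\]
The mostly-support hypothesis gives $\|f-1\|_{L^1(E)}\le (MN)^2|E\Delta\B|^2$ and $\|g\|_{L^2(E)}^2\le M^2|E\Delta\B|^2$, so this middle term is bounded by $C_2(MN)^2|E\Delta\B|^2$ for a constant $C_2=C_2(q,d)$.

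For the third piece, I would first replace $g$ by the representative of its equivalence class mod $2\pi$ lying in $(-\pi,\pi]$; this does not change $e^{ig}$, and on the set $\{|g|>\pi\}$ (whose measure is $O(\|g\|_{L^2(E)}^2)=O(M^2|E\Delta\B|^2)$ by Chebyshev) the adjustment contributes only an $O(|E\Delta\B|^2)$ error that can be absorbed into $R_q$. With $|g'|\le\pi<5\pi/4$ confirmed, Theorem \ref{Taylorfreqcor} yields
\[
\|\widehat{f'e^{ig'}1_\B}\|_q^q - \|\widehat{1_\B}\|_q^q
\;\le\; o_{q-\overline q}(1)\,\|g'\|_{L^2(\B)}^2
+ O\bigl(\|f'-1\|_{L^1(\B)}\|g'\|_{L^2(\B)}+\|f'-1\|_{L^1(\B)}^2+\|g'\|_{L^2(\B)}^{5/2}\bigr).
\]
Substituting $\|g'\|_{L^2(\B)}\le M|E\Delta\B|$ and $\|f'-1\|_{L^1(\B)}\le (MN)^2|E\Delta\B|^2$ makes every term either $o_{q-\overline q}(1)\,|E\Delta\B|^2$ or $O(|E\Delta\B|^{5/2})$. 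The remainder $R_q$ from Lemma \ref{Taylorfg} similarly contributes at most $O(|E\Delta\B|^{5/2})$, since $3/q'>2$ for $q$ near $\overline q\ge 4$.

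Combining everything and using $|E\Delta\B|^2\le 4\,\text{dist}(E,\mathfrak{E})^2$,
\[
\|\widehat{fe^{ig}1_E}\|_q^q - \|\widehat{1_\B}\|_q^q
\;\le\; \bigl(-c_1 + 4C_2(MN)^2 + o_{q-\overline q}(1)\bigr)\text{dist}(E,\mathfrak{E})^2 + O\bigl(\text{dist}(E,\mathfrak{E})^{5/2}\bigr).
\]
Choosing $\rho$ small so that $o_{q-\overline q}(1)<c_1/4$ and $\delta_0$ small so that the $O(\text{dist}^{5/2})$ term is absorbed into a further $c_1/4$ loss, the conclusion follows with $c_{q,d}=c_1/4$ provided $4C_2(MN)^2<c_1/4$, i.e.\ provided $MN$ is not too large relative to the Christ constant. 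The main obstacle is precisely this last balancing: the constant $c_1$ obtained from Christ's stability is fixed by $(q,d)$, while the spurious positive correction from the cross term scales as $(MN)^2|E\Delta\B|^2$, so the "mostly support" regime can only be closed for $MN$ bounded by a threshold depending on $(q,d)$. This is not a real restriction because the regions of parameter space with larger $MN$ are handled by the complementary Propositions \ref{modulus} and \ref{freq}; a secondary technical point is the $2\pi$-representative reduction for $g$ needed to invoke Theorem \ref{Taylorfreqcor}, which must be checked to be compatible with the smallness hypotheses.
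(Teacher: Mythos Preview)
Your overall architecture matches the paper's: start from the Lemma~\ref{Taylorfg} decomposition, feed $\|\widehat{1_E}\|_q^q$ to Christ's stability theorem, feed $\|\widehat{f'e^{ig'}1_\B}\|_q^q$ to Theorem~\ref{Taylorfreqcor}, and absorb the remainder. The difference lies in how you treat the cross term $q\langle K_q,\,(f\cos g-1)1_{E\setminus\B}\rangle$, and this difference is fatal for the proposition as stated.

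You bound the cross term crudely by $q\|K_q\|_\infty(\|f-1\|_{L^1(E)}+\tfrac12\|g\|_{L^2(E)}^2)\le C_2(MN)^2|E\Delta\B|^2$ with a \emph{fixed} constant $C_2=C_2(q,d)$. This forces the constraint $4C_2(MN)^2<c_1/4$, which you acknowledge and then dismiss as ``not a real restriction because the regions of parameter space with larger $MN$ are handled by the complementary Propositions''. But that reasoning is circular: $M,N$ are \emph{inputs} to Proposition~\ref{support}, not free parameters. They are fixed by the other propositions (Proposition~\ref{modulus} in particular requires $M,N$ to be \emph{large}), and Proposition~\ref{support} must then hold for those large values. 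The three regimes partition the space of near-extremizers for a single choice of $M,N$; you cannot appeal to the other propositions to cover the case where your argument fails.

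The paper's fix is to exploit a sign: write $K_q=(K_q-K_{\overline q})+K_{\overline q}$. Since $\overline q$ is an even integer, $K_{\overline q}=1_\B*\cdots*1_\B\ge 0$ everywhere, and since $0\le f\le 1$ one has $f\cos g-1\le 0$ pointwise. Hence $\langle K_{\overline q},(f\cos g-1)1_{E\setminus\B}\rangle\le 0$ and can simply be discarded. The remaining piece is
\[
\langle K_q-K_{\overline q},(f\cos g-1)1_{E\setminus\B}\rangle
\;\le\; o_{q-\overline q}(1)\bigl(\|f-1\|_{L^1(E)}+\|g\|_{L^2(E)}^2\bigr)
\;\le\; o_{q-\overline q}(1)(M^2N^2+M^2)|E\Delta\B|^2,
\]
using the continuity of $q\mapsto K_q$ (Lemma~\ref{Kqlem2}). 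Now the spurious positive contribution carries a factor $o_{q-\overline q}(1)$, so it is beaten by Christ's $-c_1|E\Delta\B|^2$ by taking $\rho$ small \emph{depending on $M,N$}---which is exactly what the statement allows. This sign argument is the one idea your proposal is missing.
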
 

\begin{proof}
We begin with the expression from  Lemma \ref{Taylorfg}, in which the terms with $f$ and $g$ are separated from terms with just the support $E$. 
Recall that $g'=g$ on $E\cap \B$ and $g'=0$ on $B\setminus E$ and that $f'=f$ on $E\cap \B$ and $f'=1$ on $B\setminus E$. We have 

\begin{align} 
\|\widehat{fe^{ig}1_E}\|_q^q&=\|\widehat{1_E}\|_q^q+q\langle K_q, f\cos g1_{E\setminus \B}-1_{E\setminus \B}\rangle-\|\widehat{1_{\B}}\|_q^q+\|\widehat{f'e^{ig'}1_{\B}}\|_q^q\label{where} \\
    &\quad+ O((\|g\|_2^2+\|f-1\|_{L^1(E)})|E\Delta\B|^{1/2})+O(\|g\|_{L^2(E)}^3+\|f-1\|^{2}_{L^1(E)}+|E\Delta\B|^{3/q'})\nonumber .
\end{align} 

We use Christ's Theorem 2.6 from \cite{c2} to bound $\|\widehat{1_E}\|_q$:
\[\|\widehat{1_E}\|_q^q\le \|\widehat{1_{\B}}\|_q^q-c_{q,d}|E\Delta\B|^2 . \]

By Theorem \ref{Taylorfreqcor}, we control $\|\widehat{f'e^{ig'}1_{\B}}\|_q$ as follows. 

\begin{align*}
\|\widehat{f'e^{ig'}1_{\B}}\|_q^q&\le  \|\widehat{1_{\B}}\|_q^q -c_{q,d}\|(I-P_{\mc{H}})K_{\overline{q}}^{1/2}g'\|_{L^2(\B)}^2  +o_{q-\overline{q}}(1)\|g'\|_{L^2(\B)}^2 \nonumber \\ 
    &\quad+O(\|f'-1\|_{L^1(\B)}\|g'\|_{L^2(\B)})+O(\|f'-1\|_{L^1(\B)}^2+\|g'\|_{L^2(\B)}^{5/2}) \nonumber \\
&\le \|\widehat{1_{\B}}\|_q^q+0  +o_{q-\overline{q}}(1)M^2|E\Delta\B|^2 \nonumber \\ 
    &\quad+O_{M,N}(|E\Delta\B|^3)+O_{M,N}(\|E\Delta\B|^{4}+|E\Delta\B|^{5/2}) \\
&= \|\widehat{1_{\B}}\|_q^q+o_{q-\overline{q}}(1)M^2|E\Delta\B|^2  +O_{M,N}(|E\Delta\B|^{5/2}) .  
\end{align*} 
Finally, to bound the inner product term from (\ref{where}), using that $K_{\overline{q}}\ge 0$, calculate
\begin{align*}
\langle K_q,f\cos g1_{E\setminus\B}-1_{E\setminus\B}\rangle &= \langle K_q-K_{\overline{q}}+K_{\overline{q}},f\cos g1_{E\setminus\B}-1_{E\setminus\B}\rangle \\
&\le o_{q-\overline{q}}(1)\|f\cos g-1\|_{L^1(E\setminus\B}+\langle K_{\overline{q}},(f\cos g-1)1_{E\setminus\B}\rangle \\
&\le o_{q-\overline{q}}(1)(\|f-1\|_{L^1(E\setminus\B)}+\|\cos g-1\|_{L^1(E\setminus\B})+0 \\
&\le o_{q-\overline{q}}(1)(M^2N^2|E\Delta\B|^2+M^2|E\Delta\B|^2). 
\end{align*}
Using the above bounds in (\ref{where}) gives
\begin{align*} 
\|\widehat{fe^{ig}1_E}\|_q^q&\le \|\widehat{1_{\B}}\|_q^q-c_{q,d}|E\Delta\B|^2+M^2N^2o_{q-\overline{q}}(1)|E\Delta \B|^2\\
    &\quad+O_{M,N}(|E\Delta\B|^{5/2}+|E\Delta\B|^{3/q'}) 
\end{align*} 
If $\delta_0,\rho$ are chosen sufficiently small, then we have the desired result.

\end{proof}

\section{Mostly frequency variation: $\max(\|f-1\|^{1/2}_1,MN|E\Delta\B|)\le N\|g\|_{L^2(E)}$ \label{freq2}}

Let $K_q,L_q$ be the functions defined in (\ref{Kq}) and (\ref{Lq}). As in \textsection\ref{supportsec}, we employ Lemma \ref{Taylorfg} to analyze the contributions from the frequency $g$.

\begin{proposition}\label{freq}
Let $d\ge 1$ and let ${\overline{q}}>3$ be an even integer and $N\in\R^+$. There exist  $\delta_0=\delta_0({\overline{q}},d)>0$, $\rho(\delta_0,{\overline{q}},N)>0$, and $M=M({\overline{q}},\rho)\in\N$,  such that the following holds. Let $q\in(3,\infty)$, $E\subset\R^d$ be a Lebesgue measurable set with $|E|\le |\B|$, $0\le f\le 1$, and $-\pi\le g\le \pi$.  Suppose that  $\|f-1\|_{L^1(\B)}\le\delta_0$,  $\|g\|_{L^2(E)}\le \delta_0$, $ \|e^{ig}-1\|_{L^2(E\cap\B)} \le 2\inf_{\substack{L\text{ affine}\\\R-\text{valued}}}  \|e^{i(g-L)}-1\|_{L^2(E\cap\B)}$, $|E\Delta\B|\le\delta_0$, $|E|= |\B|$, and $|q-{\overline{q}}|\le\rho(\delta_0,{\overline{q}})$. If 
\[\max(\|f-1\|^{1/2}_1,MN|E\Delta\B|)\le N\|g\|_{L^2(E)}, \]
then 
\begin{align*}
\|\widehat{fe^{ig}1_{E}}\|_q^q&\le  \|\widehat{1_{\B}}\|_q^q-c_{q,d}\inf_{\substack{L\text{ affine}\\\R-\text{valued}}}  \|e^{i(g-L)}-1\|_{L^2(E)}
\end{align*} 
for a constant $c_{q,d}>0$ depending only on the exponent $q$ and on the dimension. 

\end{proposition}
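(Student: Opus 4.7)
The plan is to apply Lemma \ref{Taylorfg} to split $\|\widehat{fe^{ig}1_E}\|_q^q$ into a support-only piece, a boundary cross-term on $E\setminus\B$, and a ball-supported piece, then feed each through a matched estimate. The key new observation (absent in the support-dominated case of \textsection\ref{supportsec}) is that the boundary inner product $q\langle K_q,(f\cos g-1)1_{E\setminus\B}\rangle$ contributes a negative term proportional to $-\|e^{ig}-1\|_{L^2(E\setminus\B)}^2$, which exactly accommodates the $E\setminus\B$ contribution to the target phase distance.

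First, I apply Lemma \ref{Taylorfg} with $f',g'$ the extensions defined there; the error terms in that lemma are of lower order than $\|g\|_{L^2(E)}^2$ once the regime hypothesis $\max(\|f-1\|_{L^1(E)}^{1/2},MN|E\Delta\B|)\le N\|g\|_{L^2(E)}$ is invoked and $M,N$ are large, $\delta_0,\rho$ small. I bound $\|\widehat{1_E}\|_q^q$ by Christ's Theorem \ref{christmain} to get $-c_{q,d}|E\Delta\B|^2$, and I control the ball-supported piece by Theorem \ref{Taylorfreqcor} (whose hypotheses are inherited because $|g'|\le\pi<5\pi/4$ and both $\|f'-1\|_{L^1(\B)}$ and $\|g'\|_{L^2(\B)}$ are controlled by the corresponding quantities on $E$), obtaining
\begin{align*}
\|\widehat{f'e^{ig'}1_\B}\|_q^q\le\|\widehat{1_\B}\|_q^q-c\|(I-P_{\mathcal{H}})K_{\overline{q}}^{1/2}g'\|_{L^2(\B)}^2+o_{q-\overline{q}}(1)\|g\|_{L^2(E)}^2+\text{lower-order errors}.
\end{align*}

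Second, I analyze the boundary inner product. Writing $1-f\cos g=(1-\cos g)+(1-f)\cos g$ and using $|e^{ig}-1|^2=2(1-\cos g)$ together with $K_q\ge c_0>0$ on $2\B$ (Lemma \ref{Kqprop}, applicable because $E\setminus\B\subset 2\B$ for $|E\Delta\B|\le\delta_0$ small), this term is bounded by $-c\|e^{ig}-1\|_{L^2(E\setminus\B)}^2+O(\|f-1\|_{L^1(E)})+o_{q-\overline{q}}(1)|E\Delta\B|$. Third, since $K_{\overline{q}}\ge c_0$ on $\B$, $g'|_{E\cap\B}=g$, and $|e^{it}-1|\le|t|$,
\begin{align*}
\|(I-P_{\mathcal{H}})K_{\overline{q}}^{1/2}g'\|_{L^2(\B)}^2\ge c_0\inf_{L\in\mathfrak{L}}\|g-L\|_{L^2(E\cap\B)}^2\ge c_0\inf_{L\in\mathfrak{L}}\|e^{i(g-L)}-1\|_{L^2(E\cap\B)}^2.
\end{align*}
Assembling these three estimates and invoking the near-optimality hypothesis $\|e^{ig}-1\|_{L^2(E\cap\B)}\le 2\inf_L\|e^{i(g-L)}-1\|_{L^2(E\cap\B)}$ along with the elementary bound (taking $L=0$)
\begin{align*}
\inf_L\|e^{i(g-L)}-1\|_{L^2(E)}^2\le \|e^{ig}-1\|_{L^2(E\cap\B)}^2+\|e^{ig}-1\|_{L^2(E\setminus\B)}^2
\end{align*}
yields the desired bound $\|\widehat{fe^{ig}1_E}\|_q^q\le\|\widehat{1_\B}\|_q^q-c_{q,d}\inf_L\|e^{i(g-L)}-1\|_{L^2(E)}^2$.

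The main obstacle is recognizing that the boundary inner product is the essential source of the $-\|e^{ig}-1\|_{L^2(E\setminus\B)}^2$ contribution, not a harmless $O(|E\Delta\B|)$ error: the $|E\Delta\B|^2$ bound from Christ's Theorem is second order in $|E\Delta\B|$, while the trivial estimate $\|e^{i(g-L)}-1\|_{L^2(E\setminus\B)}^2\le 4|E\setminus\B|$ is only first order, so Christ alone would not dominate the $E\setminus\B$ part of the target. The remainder is bookkeeping of error terms and the sequential choice $M\gg N\gg 1$, then $\delta_0,\rho\ll 1$, exactly as in Propositions \ref{modulus} and \ref{support}.
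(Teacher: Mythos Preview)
Your argument has a genuine gap in the treatment of the boundary inner product. You assert that $E\setminus\B\subset 2\B$ once $|E\Delta\B|\le\delta_0$ is small, and then use the positive lower bound $K_q\ge c_0>0$ on $2\B$ from Lemma \ref{Kqprop} to extract $-c\|e^{ig}-1\|_{L^2(E\setminus\B)}^2$. But $|E\Delta\B|\le\delta_0$ is a \emph{measure} constraint only: it says nothing about where $E\setminus\B$ sits. One can have $E=(\B\setminus A)\cup A'$ with $A\subset\B$, $A'$ located arbitrarily far from $\B$, and $|A|=|A'|$ as small as one likes. On $E\setminus 2\B$ the function $K_{\overline q}$ has no uniform positive lower bound (it tends to zero at infinity), so the boundary inner product cannot produce the claimed negative term there. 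And as you yourself correctly note in the last paragraph, the $-c_{q,d}|E\Delta\B|^2$ bound from Theorem \ref{christmain} is second order in $|E\Delta\B|$ and cannot absorb a first-order term $\|e^{ig}-1\|_{L^2(E\setminus 2\B)}^2\le 4|E\setminus 2\B|$.

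The paper fills exactly this gap by invoking Lemma \ref{christlem2} (a refinement of Theorem \ref{christmain}) rather than Theorem \ref{christmain} itself: that lemma yields
\[
\|\widehat{1_E}\|_q^q\le\|\widehat{1_{E\cap 2\B}}\|_q^q-c|E\setminus 2\B|+C|E\Delta\B|\cdot|E\setminus 2\B|+C|E\Delta\B|^{2+\alpha},
\]
and the explicit term $-c|E\setminus 2\B|$ dominates $-\tfrac{c}{4}\|e^{ig}-1\|_{L^2(E\setminus 2\B)}^2$. The boundary inner product is then only asked to handle $(E\cap 2\B)\setminus\B$, where the lower bound on $K_{\overline q}$ is legitimate. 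Apart from this localization issue, your outline (Lemma \ref{Taylorfg} decomposition, Theorem \ref{Taylorfreqcor} on the ball piece, near-optimality hypothesis to pass from $\|(I-P_{\mathcal H})K_{\overline q}^{1/2}g'\|_{L^2(\B)}$ to $\inf_L\|e^{i(g-L)}-1\|_{L^2(E\cap\B)}$) matches the paper's proof.
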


For use in the subsequent proof of Proposition \ref{freq}, we state a version of Lemma 4.1 from \cite{c2} with the special case $\eta=1$, noting that $q_d$ in the statement may be taken to be equal to 3.

\begin{lemma}\label{christlem2}\cite{c2} Let $d\ge 1$ and ${\overline{q}}\ge 4$ be an even integer. There exists $\delta_0=\delta_0({\overline{q}})>0$ and $c,C,\rho,\a\in\R^+$ with the following property. Let $E\subset\R^d$ be a Lebesgue measurable set satisfying $|E|=|\B|$. If $|q-{\overline{q}}|<\rho$ and $|E\Delta\B|\le \delta_0$, then 
\[ \|\widehat{1_{E}}\|_q^q\le\|\vwidehat{1_{E\cap 2\B}}\|_q^q-c|E\setminus 2\B|+C|E\Delta\B|\cdot|E\setminus2\B|+C|E\Delta\B|^{2+\a}.  \]
\end{lemma}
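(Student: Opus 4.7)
Since the lemma is stated as the special case $\eta=1$ of Lemma~4.1 from \cite{c2}, my plan is to outline the Christ-style argument. First, by Lemma \ref{equicontinuity} and the continuity of $q\mapsto K_q$ from Lemma \ref{Kqlem2}, both $\|\widehat{1_E}\|_q^q$ and $\|\widehat{1_{E\cap 2\B}}\|_q^q$ vary continuously in $q$ uniformly over the admissible class of sets, so after shrinking $\rho$ one reduces the inequality at $q$ to the inequality at $q = \overline{q}$, with the deviation absorbed in $C|E\Delta\B|^{2+\alpha}$.

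At $q = \overline{q} = 2m$, the identity $\|\widehat{f}\|_q^q = \|f^{*m}\|_2^2$ permits a direct combinatorial treatment. Writing $1_E = u + v$ with $u = 1_{E\cap 2\B}$ and $v = 1_{E\setminus 2\B}$ and expanding $1_E^{*m}$ by the binomial theorem, I would isolate
\[ \|\widehat{1_E}\|_q^q = \|\widehat{1_{E\cap 2\B}}\|_q^q + q\int K_u\, v\, dx + R, \]
where $K_u = u^{*m}*\tilde u^{*(m-1)}$ and $R$ collects all terms involving at least two factors of $v$. Young's inequality gives $|R| \le C|E\setminus 2\B|^2 \le C|E\Delta\B|\cdot|E\setminus 2\B|$, and $\|K_u - K_{\overline{q}}\|_{L^\infty} = O(|E\Delta\B|)$ since each convolutional factor of $u$ differs from the corresponding factor of $1_\B$ only on $E\Delta\B$. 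Thus $\int K_u\, v\, dx = \int_{E\setminus 2\B} K_{\overline{q}}\, dx + O(|E\Delta\B|\cdot|E\setminus 2\B|)$.

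The delicate step is to control the residual $\int_{E\setminus 2\B} K_{\overline{q}}$ and extract a coefficient of the form $-c|E\setminus 2\B|$. Here the hypothesis $|E| = |\B|$ is essential: since $|\B\setminus E| = |E\setminus \B| \ge |E\setminus 2\B|$, one may select $A \subset \B\setminus E$ with $|A| = |E\setminus 2\B|$, and Lemma \ref{Kqprop} supplies a definite gap $\min_{x\in\B} K_{\overline{q}}(x) - \sup_{|y|\ge 2} K_{\overline{q}}(y) \ge c' > 0$. Pairing the mass of $E\setminus 2\B$ against $A$ via $K_{\overline{q}}$ and tracking the analogous bookkeeping for $\|\widehat{1_{E\cap 2\B}}\|_q^q$ through a parallel Taylor expansion about $\|\widehat{1_\B}\|_q^q$, one obtains a net coefficient $-c|E\setminus 2\B|$ with $c$ proportional to $qc'$; the cubic remainder $O(\|h\|_{q'}^3) = O(|E\Delta\B|^{3/q'})$ from Lemma \ref{Taylorgen} contributes to $C|E\Delta\B|^{2+\alpha}$ with $\alpha = 3/q' - 2 > 0$ for $q$ near $\overline q \ge 4$. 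The main obstacle is precisely this organization: the first-order Taylor contribution $q\int K_{\overline{q}} v$ is by itself non-negative (since $K_{\overline{q}} \ge 0$ for even $\overline{q}$), so the negative coefficient emerges only after the mass-balance comparison against $A$ and the use of the strict monotonicity of $K_{\overline{q}}$, which constitutes the core of Christ's Lemma~4.1 and transports verbatim to the present setting.
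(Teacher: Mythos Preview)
The paper does not prove this lemma; it simply cites Lemma~4.1 of \cite{c2} (specialized to $\eta=1$). So there is no in-paper argument to compare against, only your reconstruction of Christ's proof. That reconstruction has two concrete problems.

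First, the continuity reduction. Equicontinuity from Lemma~\ref{equicontinuity} yields an error $o_{q-\overline q}(1)$ that is \emph{uniform in $E$} and does not shrink with $|E\Delta\B|$. It therefore cannot be absorbed into $C|E\Delta\B|^{2+\alpha}$, which tends to zero as $E\to\B$. Christ's argument does not first reduce to $q=\overline q$; one works directly at $q$ near $\overline q$, invoking the continuity of $q\mapsto K_q$ (Lemma~\ref{Kqlem2}) only to transfer the strict monotonicity statement of Lemma~\ref{Kqprop}, which is a pointwise property with a uniform gap.

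Second, and more seriously, your binomial expansion gives
\[
\|\widehat{1_E}\|_q^q-\|\widehat{1_{E\cap 2\B}}\|_q^q
= q\int_{E\setminus 2\B}K_u\,dx+R,
\qquad R=O(|E\setminus 2\B|^2),
\]
and the linear term $q\int_{E\setminus 2\B}K_u\approx q\int_{E\setminus 2\B}K_{\overline q}$ is \emph{non-negative} (for even $\overline q$, $K_{\overline q}=1_\B^{*(\overline q-1)}>0$ on $2\B$). No ``parallel bookkeeping'' for $\|\widehat{1_{E\cap 2\B}}\|_q^q$ changes this: the difference of first-order terms in the two Taylor expansions about $\|\widehat{1_\B}\|_q^q$ is exactly $q\int_{E\setminus 2\B}K_{\overline q}\ge 0$. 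Your mass-balance idea (pick $A\subset\B\setminus E$ with $|A|=|E\setminus 2\B|$ and exploit $\inf_\B K_{\overline q}-\sup_{|x|\ge 2}K_{\overline q}>0$) is the right mechanism, but it compares $\|\widehat{1_E}\|_q^q$ to $\|\widehat{1_{E^\sharp}}\|_q^q$ for an \emph{equal-measure} competitor $E^\sharp=(E\cap 2\B)\cup A\subset 2\B$, not to $\|\widehat{1_{E\cap 2\B}}\|_q^q$. Christ's Lemma~4.1 is organized around such an equal-measure rearrangement; the passage to the form stated here then requires a further comparison of $\|\widehat{1_{E^\sharp}}\|_q^q$ with $\|\widehat{1_{E\cap 2\B}}\|_q^q$, which your sketch does not supply. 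As written, the outline does not produce the negative coefficient in front of $|E\setminus 2\B|$ relative to $\|\widehat{1_{E\cap 2\B}}\|_q^q$.
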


\begin{proof}(of Proposition \ref{freq})

Use the expression from Lemma \ref{Taylorfg} in which the terms with $f$ and $g$ are separated from the terms with only the support $E$. Majorize the big-O terms with $\|f -1\|_{L^1(E)}$ or $|E\Delta\B|$ by terms with $\|g\|_{L^2(E)}$. 
\begin{align} 
\|\widehat{f  e^{ig}1_E}\|_q^q=\|\widehat{1_E}\|_q^q+q&\langle K_q, f \cos g1_{E\setminus \B}-1_{E\setminus \B}\rangle-\|\widehat{1_{\B}}\|_q^q \label{fra4} +\|\widehat{f 'e^{ig'}1_{\B}}\|_q^q\\ 
+& O_N(\|g\|_{L^2(E)}^{5/2}+\|g\|_{L^2(E)}^{3/q'})\nonumber
\end{align} 
where $f'=f$ on $E\cap \B$ and $f'=1$ on $\B\setminus E$ and $g'=g$ on $E\cap \B$and $g'=0$ on $\B\setminus E$. We further analyze  $\|\widehat{1_{E}}\|_q^q$ and $\langle K_q,f\cos g1_{E\setminus\B}-1_{E\setminus\B}\rangle$.

Use Lemma \ref{christlem2} to extract $-|E\setminus2\B|$ from $\|\widehat{1_E}\|_q^q$:

\begin{align*}
    \|\widehat{1_E}\|_q^q&\le\|\widehat{1_{E\cap{2\B}}}\|_q^q-c|E\setminus2\B|+C|E\Delta\B|\cdot|E\setminus 2\B|+C|E\Delta\B|^{2+\a} \\
    &\le \|\widehat{1_{\B}}\|_q^q-c|E\setminus 2\B|+\frac{C}{M^2}\|g\|_2^2+O(\|g\|_2^{2+\a})\\
    &\le \|\widehat{1_{\B}}\|_q^q-c/4\|e^{ig}-1\|_{L^2(E\setminus 2\B)}^2+\frac{C}{M^2}\|g\|_2^2+O(\|g\|_2^{2+\a}). 
\end{align*}

As above, let $\overline{q}$ denote the nearest even integer to $q$.  Next bound the term $\langle K_q,f\cos g1_{E\setminus \B}-1_{E\setminus\B}\rangle$ above by a negative multiple of $\|e^{i g}-1\|_{L^2((E\cap2\B)\setminus\B}^2$ plus an error term. Let $A_g=\{x\in E\setminus \B:\cos g\ge 0\}$.

\begin{align}
\langle K_q-K_{\overline{q}}+K_{\overline{q}},& f\cos g1_{E\setminus \B}-1_{E\setminus\B}\rangle = o_{q-\overline{q}}(1)\|f\cos g-1\|_{L^1(E\setminus\B)}+\langle K_{\overline{q}}, f\cos g1_{E\setminus \B}-1_{E\setminus\B}\rangle \nonumber
\\
&\le o_{q-\overline{q}}(1)\|f\cos g-1\|_{L^1(E\setminus\B)}+\langle K_{\overline{q}}, f\cos g1_{A_g}-1_{E\setminus\B}\rangle \nonumber\\ 
&\le o_{q-\overline{q}}(1)(\|f-1\|_{L^1(E)}+\|g\|_{L^2(E)}^2)+\langle K_{\overline{q}}, \cos g1_{A_g}-1_{E\setminus\B}\rangle \nonumber\\ 
&\le o_{q-\overline{q}}(1)(N^2+1)\|g\|_{L^2(E\setminus\B)}^2+\langle K_{\overline{q}}, f\cos g1_{A_g}-1_{A_g}\rangle-\langle K_{\overline{q}},1_{(E\setminus\B)\setminus A_g}\rangle \nonumber\\ 
&\le o_{q-\overline{q}}(1)(N^2+1)\|g\|_{L^2(E\setminus\B)}^2-\inf_{2\B}K_{\overline{q}}\cdot \left(\|\cos g-1\|_{L^1(A_g\cap 2\B)}+ |(E\cap 2\B)\setminus (\B\cup A_g)|\right) \nonumber  \\
&\le o_{q-\overline{q}}(1)(N^2+1)\|g\|_{L^2(E\setminus\B)}^2-\inf_{2\B}K_{\overline{q}}\cdot \|e^{ig}-1\|_{L^2((E\cap 2\B)\setminus\B)}^2 \label{fra7} 
\end{align} 
where we used that $K_{\overline{q}}\ge 0$ everywhere.

By Theorem \ref{Taylorfreqcor}, since $g$ is real-valued with $|g|\le \pi$,

\begin{align*}
\|\widehat{f'e^{ig'}1_{\B}}\|_q^q&\le  \|\widehat{1_{\B}}\|_q^q -c_{q,d}\|(I-P_{\mc{H}})K_{\overline{q}}^{1/2}g'\|_{L^2(\B)}^2  +o_{q-\overline{q}}(1)\|g'\|_{L^2(\B)}^2 \nonumber \\ 
    &\quad+O(\|f'-1\|_{L^1(\B)}\|g'\|_{L^2(\B)})+O(\|f'-1\|_{L^1(\B)}^2+\|g'\|_{L^2(\B)}^{5/2})\nonumber\\
&\le  \|\widehat{1_{\B}}\|_q^q -c_{q,d}\|(I-P_{\mc{H}})K_{\overline{q}}^{1/2}g1_{E\cap \B}\|_{L^2(\B)}^2  +o_{q-\overline{q}}(1)\|g\|_{L^2(E)}^2 \nonumber \\
    &\quad+O_N(\|g\|_{L^2(E)}^{5/2}). \nonumber
\end{align*}
Recalling the definition of $\mc{H}$ in Definition \ref{defH} and the hypotheses about $g$, note that 
\begin{align*}
    \|(I-P_{\mc{H}})K_{\overline{q}}^{1/2}g1_{E\cap \B}\|_{L^2(\B)}&\ge \inf_{\B}K_{\overline{q}}^{1/2}\cdot \|g-K_{\overline{q}}^{-1/2}P_{\mc{H}}(K_{\overline{q}}^{1/2}g1_{E\cap \B})\|_{L^2(E\cap\B)}\\
    &\ge \inf_{\B}K_{\overline{q}}^{1/2}\cdot \|e^{ig}-e^{iK_{\overline{q}}^{-1/2}P_{\mc{H}}(K_{\overline{q}}^{1/2}g1_{E\cap \B})}\|_{L^2(E\cap\B)}\\
    &\ge \inf_{\B}K_{\overline{q}}^{1/2}\cdot \inf_{\substack{L\text{ affine}\\\R-\text{valued}}}  \|e^{i(g-L)}-1\|_{L^2(E\cap\B)} \\
    &\ge \frac{1}{2}\inf_{\B}K_{\overline{q}}^{1/2}\cdot   \|e^{ig}-1\|_{L^2(E\cap\B)}. 
\end{align*}
Combining the above analysis yields

\begin{align} 
\|\widehat{f e^{ig}1_E}\|_q^q&\le \|\widehat{1_{\B}}\|_q^q-c/4\|e^{ig}-1\|_{L^2(E\setminus2\B)}^2+\frac{C}{M^2}\|g\|_{L^2(E)}^2+o_{q-\overline{q}}(1)(N^2+1)\|g\|_{L^2(E\setminus\B)}^2\nonumber\\
    &-\inf_{2\B}K_{\overline{q}}\cdot \|e^{ig}-1\|_{L^2((E\cap 2\B)\setminus\B)}^2\nonumber \\
    &-\frac{c_{q,d}}{4}\inf_{\B}K_{\overline{q}}\cdot \|e^{ig}-1\|_{L^2(E\cap \B)}^2  +o_{q-\overline{q}}(1)\|g\|_{L^2(E)}^2 +O_N(\|g\|_{L^2(E)}^{2+\epsilon}) \nonumber \\
&= \|\widehat{1_{\B}}\|_q^q-\tilde{c}\|e^{ig}-1\|_{L^2(E)}^2\label{this} \\
    &+\left(\frac{C}{M^2}+o_{q-\overline{q}}(1)N^2\right)\|g\|_{L^2(E)}^2+O_N(\|g\|_{L^2(E)}^{2+\epsilon}) \nonumber 
\end{align} 
where $2+\epsilon=\min(2+\a,5/2,3/q')$ and $\tilde{c}>0$ depends on $\overline{q}$ and $d$. Since $|g|\le \pi$, $|e^{ig}-1|^2\ge \pi^{-2}g^2$ almost everywhere on $E$. Thus for $\delta_0$ and $\rho$ sufficiently small depending on $N$ and $M$ small enough depending on $\overline{q}$, 

\begin{align*} 
\|\widehat{f e^{ig}1_E}\|_q^q&= \|\widehat{1_{\B}}\|_q^q-\frac{\tilde{c}}{2}\|e^{ig}-1\|_{L^2(E)}^2 , 
\end{align*} 
which proves the proposition.

\end{proof}

%%%%%%%%%%%%%%%%%%%%%%%%%%%%%%%%%%%%%%%%%%%%%%%%%%%%%%%%%

\begin{subsection}{\label{opt22}Optimality of the $L^2$ norm and the exponent $2$}

We show that the exponent $2$ and the $L^2$ norm in the $\inf_{\substack{L\text{ affine}\\\R-\text{valued}}}  \|e^{i(g-L)}-1\|_{L^2(\B)}^2$ from Theorem \ref{mainthm} are optimal in the following lemma and proposition. First we prove a technical sublemma. 

\begin{notation} Let $d\ge 1$ and let $\B=\{x\in\R^d:|x|\le 1\}$. Let the function $R:L^2(\B)\to L^2(\B)$ be defined on real-valued functions $f$ in $L^2(\B)$ by $R(f)(x)= f(x)\in R/(2\pi)$ and $R(f)(x)\in[-\pi,\pi)$. 
\end{notation}

\begin{sublemma} \label{techopt}Let $d\ge 1$, $p\ge 1$. There exists $\epsilon=\epsilon(p,d)>0$  such that if $g:\R^d\to\R$ satisfies $|g|\le \epsilon$, then 
\[ \inf_{\substack{L\text{ affine}\\\R-\text{valued}}}  \|e^{i(g-L)}-1\|_{L^p(\B)}\ge \frac{1}{2}\inf_{\substack{L\text{ affine}\\\R-\text{valued}}}  \|g-L\|_{L^p(\B)}.   \]
\end{sublemma}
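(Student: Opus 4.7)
Set $\alpha:=\inf_{L}\|g-L\|_{L^p(\B)}$ and $\beta:=\inf_{L}\|e^{i(g-L)}-1\|_{L^p(\B)}$, where the infima are taken over real-valued affine $L:\R^d\to\R$, and note that $\alpha\le\|g\|_{L^p(\B)}\le \epsilon|\B|^{1/p}$. The plan is to show that for \emph{every} such $L$,
\[
\|e^{i(g-L)}-1\|_{L^p(\B)} \ge \tfrac{2}{\pi}\alpha,
\]
which upon taking infimum yields $\beta\ge \tfrac{2}{\pi}\alpha\ge \tfrac{1}{2}\alpha$ (since $\tfrac{2}{\pi}>\tfrac{1}{2}$, leaving some slack). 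The elementary pointwise inequality $|e^{iz}-1|\ge \tfrac{2}{\pi}|z|$, valid for $|z|\le\pi$, is the main driver.

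The argument splits on the essential range of $g-L$ on $\B$. In \emph{Case A}, there exists $n_0\in\Z$ such that the essential range of $g-L$ is contained in $[2\pi n_0-\pi,\,2\pi n_0+\pi]$. Setting $L':=L+2\pi n_0$ (still affine), one has $|g-L'|\le\pi$ a.e., so the pointwise inequality gives $|e^{i(g-L)}-1|=|e^{i(g-L')}-1|\ge \tfrac{2}{\pi}|g-L'|$; integrating and using $\|g-L'\|_{L^p(\B)}\ge\alpha$ yields the claimed bound. In \emph{Case B}, no such $n_0$ exists. Here I would establish a uniform lower bound $\|e^{i(g-L)}-1\|_{L^p(\B)}\ge c_0>0$ depending only on $p$ and $d$. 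Then shrinking $\epsilon$ so that $\tfrac{\pi}{2}\epsilon|\B|^{1/p}\le c_0$ forces $\tfrac{2}{\pi}\alpha\le c_0\le \|e^{i(g-L)}-1\|_{L^p(\B)}$, completing this case.

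The main obstacle is the uniform bound in Case B. Failing the ``fit'' condition forces the essential range of $g-L$ to straddle some boundary $\pi+2\pi k$: both $\{g-L>\pi+2\pi k\}$ and $\{g-L<\pi+2\pi k\}$ have positive measure in $\B$. Writing $u:=g-L-(\pi+2\pi k)$, the identity $|e^{i(g-L)}-1|=2|\cos(u/2)|$ gives $|e^{i(g-L)}-1|\ge 1$ on the transition set $\{|u|\le\pi/2\}$, so the task reduces to a lower bound on the measure of this set. Since $|g|\le\epsilon$ is small and $L(x)=a\cdot x+c$ is affine, the straddling condition forces $L$ to take values in a neighborhood of $-(\pi+2\pi k)$ of width $\sim\epsilon$; the affine level-set geometry of $\B$ (whose Lebesgue pushforward under $L$ has a concave density supported on the interval of length $2|a|$) then supplies the required measure bound, provided $|a|$ is controlled. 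The latter is arranged by a preliminary dichotomy: if $|a|$ is very large, an equidistribution argument already yields $\|e^{i(g-L)}-1\|_{L^p(\B)}\ge c_0'>0$ directly, while for moderate $|a|$ the slab-measure estimate delivers $c_0$. Combining Cases A and B completes the proof.
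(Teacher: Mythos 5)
Your plan is correct and all of its steps are completable by standard arguments: Case A is immediate from $|e^{i\theta}-1|=2|\sin(\theta/2)|\ge\tfrac{2}{\pi}|\theta|$ on $[-\pi,\pi]$ after absorbing $2\pi n_0$ into the affine competitor, and in Case B the identity $|e^{i(g-L)}-1|=2|\cos(u/2)|$, the intermediate-value argument placing a near-zero of $L+\pi+2\pi k$ inside $\B$, and the dichotomy on $|\nabla L|$ (equidistribution for large gradient, a ball/slab measure bound on the transition set $\{|u|\le\pi/2\}$ for moderate gradient) together give the uniform lower bound $c_0(p,d)$, which you then beat by shrinking $\epsilon$. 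The paper organizes the argument differently: it introduces the reduction operator $R$ with values in $[-\pi,\pi)$, writes $\inf_L\|e^{i(g-L)}-1\|_{L^p}\ge\tfrac12\inf_L\|R(g-L)\|_{L^p}$, and then argues that this last infimum is no smaller than $\inf_L\|g-L\|_{L^p}$ by showing any near-minimizer $L_0$ of the unreduced problem is uniformly bounded on $\B$ (a large value of $L_0$ forces $|\nabla L_0|\gtrsim1$ and hence $|L_0|\ge2$ on a half-ball, contradicting $\|g-L_0\|_{L^p}\lesssim\epsilon$), so that no mod-$2\pi$ reduction occurs for the competitors that matter. Your per-competitor case split on whether the essential range of $g-L$ fits in a single $2\pi$-window attacks the same wrap-around issue — the only way $\|e^{i(g-L)}-1\|$ can be small while $\|g-L\|$ is large is for $g-L$ to concentrate near $2\pi\Z$ — but treats it for \emph{every} $L$ rather than only near-minimizers, and it sidesteps the paper's claim that $R(g-L)=g-L$ whenever $|L|\le3$ (which, as literally stated, needs the constant $3$ lowered below $\pi-\epsilon$). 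The cost is a longer Case B; the benefit is that the delicate scenario is handled explicitly and with room to spare in the constant ($\tfrac{2}{\pi}>\tfrac12$).
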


\begin{proof} Using the notation $R$ above and that $|e^{i\theta}-1|\ge \frac{1}{2}\theta$ for all $\theta\in[-\pi,\pi)$, note 
\begin{align*}
    \inf_{\substack{L\text{ affine}\\\R-\text{valued}}}  \|e^{i(g-L)}-1\|_{L^p(\B)}&=\inf_{\substack{L\text{ affine}\\\R-\text{valued}}}  \|e^{iR(g-L)}-1\|_{L^p(\B)}\\
    &\ge \frac{1}{2}\inf_{\substack{L\text{ affine}\\\R-\text{valued}}}  \|R(g-L)\|_{L^p(\B)}. 
\end{align*}
By the definition of $R$, 
\[ \inf_{\substack{L\text{ affine}\\\R-\text{valued}}}  \|R(g-L)\|_{L^p(\B)}\le \inf_{\substack{L\text{ affine}\\\R-\text{valued}}}  \|g-L\|_{L^p(\B)}. \]
For the reverse inequality, it suffices to note that
\[ \inf_{\substack{L\text{ affine}\\\R-\text{valued}}}  \|g-L\|_{L^p(\B)}= \inf_{\substack{L:\B\to\R\,\text{ affine}\\ |L|\le 3}}  \|g-L\|_{L^p(\B)} \]
since for $|L|\le 3$ and $\epsilon<1$, $R(g-L)=g-L$. 
Indeed, suppose for $L_0(x)=x\cdot\a+b$ with $\a\in\R^d$, $b\in\R$ that
\begin{align}\|g-L_0\|_{L^p(\B)}\le 2 \inf_{\substack{L\text{ affine}\\\R-\text{valued}}}  \|g-L\|_{L^p(\B)}\le 2\epsilon|\B|^{1/p}.\label{techeasy}
\end{align}

Suppose for $x\in\B$ that $|L_0(x)|\ge 3$. Since $|g|\le \epsilon$, we know that there exists some $y\in\B$ such that $|L_0(y)|\le \epsilon^{1/2}$. Then if $\epsilon<1$,
\begin{align*}
    2\le 3-\epsilon^{1/2}\le |L_0(x)-L_0(y)|=|\a\cdot (x-y)|\le 2|\a|, 
\end{align*}
so $|L_0(z)|\ge 2$ on $S:= \{z\in\B:|z-y|<1\}$. Thus 
\[ \|g-L_0\|_{L^p(\B)}\ge \|g-L_0\|_{L^p(S)}\ge (2-\epsilon)|S|^{1/p}\ge |S|^{1/p}. \]
Since $|S|\ge |\B\cap(\B+e_1)|$ where $e_1=(1,0,\ldots,0)\in\R^d$, if $\epsilon<\frac{|\B\cap(\B+e_1)|^{1/p}}{2|\B|^{1/p}}$, this contradicts (\ref{techeasy}).

\end{proof}

\begin{lemma} \label{optpow} Let $d\ge 1$, $p\ge 1$, $n>0$, and  $\overline{q}\ge 4$ an even integer. There exists $\rho=\rho(\overline{q},d)>0$ such that the following holds. If for some $q>3$ with $|q-\overline{q}|<\rho$, there exists $c_{q,d}>0$ such that 

\begin{align*} 
\|\widehat{e^{ig}1_{\B}}\|_q^q &\le    \|\widehat{1_{\B}}\|_q^q-c_{\overline{q},d}\inf_{\substack{L\text{ affine}\\\R-\text{valued}}}  \|e^{i(g-L)}-1\|_{L^p(\B)}^n
\end{align*}
for any function $g:\R^d\to\R$, then $n\ge 2$.

\end{lemma}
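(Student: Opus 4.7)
The plan is to test the hypothesized stability inequality against the one-parameter family $g = t\psi$, where $\psi(x) = |x|^2$ (or any bounded function on $\B$ that is not almost-everywhere equal to an affine function) and $t \to 0^+$. The point is that the left-hand side of the inequality shrinks like $t^2$ while the right-hand side shrinks like $t^n$, so consistency as $t \to 0^+$ will force $n \ge 2$. Note that the hypothesis on $q$ being near $\overline{q}$ plays no role in this argument beyond ensuring that we can even talk about a candidate constant $c_{q,d}$; the Taylor machinery below is valid for any $q > 3$.

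First, I would establish an upper bound $\|\widehat{1_{\B}}\|_q^q - \|\widehat{e^{it\psi}1_{\B}}\|_q^q = O(t^2)$ by applying Lemma \ref{Taylorgen} with $f = e^{it\psi}1_{\B}$ and $E = \B$. Setting $h = (e^{it\psi}-1)1_{\B}$, pointwise $|h| \le t|\psi|$, so $\|h\|_{q'} = O(t)$, which is small enough for the lemma to apply when $t$ is small. The linear term uses the cancellation $\textup{Re}\,h = \cos(t\psi) - 1 = -\tfrac{t^2}{2}\psi^2 + O(t^4)$, giving $q\langle K_q, \textup{Re}\,h\rangle = O(t^2)$. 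Each quadratic convolution term in the expansion is $O(t^2)$ as well, since $L_q \in L^\infty(\R^d)$ and $\|\textup{Im}\,h\|_1, \|\textup{Re}\,h\|_1$ are both $O(t)$, yielding bounds like $|\langle \textup{Im}\,h * \textup{Im}\,h, L_q\rangle| \le \|L_q\|_\infty \|\textup{Im}\,h\|_1^2 = O(t^2)$. The remainder $O(\|h\|_{q'}^3)$ is $O(t^3)$. In total, the difference of the two $L^q$ norms is at most $Ct^2$ for some constant $C = C(\psi, q, d)$.

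Second, I would produce a matching lower bound for the distance term. Choose $t$ small enough that $t\|\psi\|_{L^\infty(\B)} < \epsilon(p, d)$, where $\epsilon$ is as in Sublemma \ref{techopt}. That sublemma then yields
\[ \inf_{L}\|e^{i(t\psi - L)} - 1\|_{L^p(\B)} \ge \tfrac{1}{2} \inf_{L}\|t\psi - L\|_{L^p(\B)} = \tfrac{t}{2} \inf_{L}\|\psi - L\|_{L^p(\B)}, \]
where $L$ ranges over real-valued affine functions and the second equality uses the substitution $L \mapsto tL$ in the infimum. Since $\psi(x) = |x|^2$ is not a.e.\ affine on $\B$, the quantity $c_\psi := \inf_L \|\psi - L\|_{L^p(\B)}$ is strictly positive.

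Combining the two estimates with the assumed stability inequality yields, for all sufficiently small $t > 0$,
\[ c_{q,d}(c_\psi t/2)^n \le c_{q,d}\inf_L \|e^{i(t\psi - L)} - 1\|_{L^p(\B)}^n \le \|\widehat{1_{\B}}\|_q^q - \|\widehat{e^{it\psi}1_{\B}}\|_q^q \le Ct^2. \]
Dividing by $t^n$ and letting $t \to 0^+$ rules out $n < 2$: otherwise the left side is bounded below by a positive constant while the right side tends to zero. No step presents a genuine obstacle; the main care is verifying that the $t^2$ rate in the first step really holds, which is a direct consequence of the cosine cancellation $\textup{Re}(e^{it\psi}-1) = O(t^2)$ combined with the $L^\infty$ control on $L_q$.
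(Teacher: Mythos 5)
Your proposal is correct and follows essentially the same route as the paper: a Taylor expansion via Lemma \ref{Taylorgen} showing the deficit $\|\widehat{1_{\B}}\|_q^q-\|\widehat{e^{it\psi}1_{\B}}\|_q^q$ is $O(t^2)$, combined with Sublemma \ref{techopt} to lower-bound the distance term by a constant times $t^n$, with $\psi=|x|^2$ and $t\to 0^+$. The only cosmetic difference is that the paper carries a general $g$ through the computation and specializes to $|x|^2$ at the end, while you fix $\psi=|x|^2$ from the start.
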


\begin{proof} Let $g\in L^2(\B)\cap L^\infty(\B)$ be a real-valued function. By Lemma \ref{Taylorgen}, for sufficiently small $t>0$ and for $q'$ the conjugate exponent to $q$, 

\begin{align*}
\|\widehat{e^{itg}1_{\B}}\|_q^q    &= \|\widehat{1_{\B}}\|_q^q+q\langle K_q, (\cos (tg)-1)1_{\B}\rangle-\frac{1}{4}q(q-2)\langle \sin (tg)1_{\B}*\sin (tg)1_{\B},L_q\rangle\\
&\quad +\frac{1}{4}q^2\langle \sin (tg)1_{\B},\sin (tg)1_{\B}*L_q\rangle+O(\|\cos (tg)-1\|_{L^1(\B)}^2)+O(\|e^{itg}-1\|_{L^{q'}(\B)}^3) .
\end{align*} 
Since $2|\cos (\theta)-1|= |e^{i\theta}-1|^2$ for $\theta\in\R$, $\|\cos (tg)-1\|_{L^1(\B)}^2\le \|e^{itg}-1\|_{L^2(\B)}^4\le t^4\|g\|_{L^2(\B)}^4$. Since $1<q'<2$, by H\"{o}lder's inequality,$ \|e^{itg}-1\|_{L^{q'}}\le \|e^{itg}-1\|_{L^2(\B)}(2|\B|)^{(2-q')/(2q')}$. Thus we can replace the big-O terms by $O_g(t^3)$.

Combining the above with our hypothesis and rearranging, we have for a constant $C_{q,d}>0$

\begin{align*}
c_{q,d}\inf_{\substack{L\text{ affine}\\\R-\text{valued}}}  \|e^{i(tg-L)}-1\|_{L^p(\B)}^n +O_g(t^3) &\le 
q\langle K_q, (\cos (tg)-1)1_{\B}\rangle-\frac{1}{4}q(q-2)\langle \sin (tg)1_{\B}*\sin (tg)1_{\B},L_q\rangle\\
&\quad +\frac{1}{4}q^2\langle \sin (tg)1_{\B},\sin (tg)1_{\B}*L_q\rangle \\
&\le C_{q,d}t^2\|g\|_{L^2(\B)}^2. 
\end{align*}
Let $\epsilon>0$ be as in Sublemma \ref{techopt}. For $0<t<\epsilon\|g\|_{L^\infty(\B)}^{-1}$, we then have 

\begin{align*}
\frac{c_{q,d}}{2}\inf_{\substack{L\text{ affine}\\\R-\text{valued}}}  \|tg-L\|_{L^p(\B)}^n+O_g(t^3)&\le c_{q,d}\inf_{\substack{L\text{ affine}\\\R-\text{valued}}}  \|e^{i(tg-L)}-1\|_{L^p(\B)}^n +O_g(t^3) \\
&\le C_{q,d}t^2\|g\|_{L^2(\B)}^2. 
\end{align*}
Divide by $t^2$ to get

\begin{align*}
\frac{c_{q,d}}{2}t^{n-2}\inf_{\substack{L\text{ affine}\\\R-\text{valued}}}  \|g-L\|_{L^p(\B)}^n+O_g(t)\le C_{q,d}\|g\|_{L^2(\B)}^2. 
\end{align*}
Taking $g=|x|^2$ and let $t\to 0$ to conclude that $n\ge 2$.

\end{proof}

\begin{proposition}\label{optexp}  Let $d\ge 1$, $p\ge 1$, and  $\overline{q}\ge 4$ an even integer. There exists $\rho=\rho(\overline{q},d)>0$ such that the following holds. If for some $q>3$ with $|q-\overline{q}|<\rho$, there exists $c_{q,d}>0$ such that 

\begin{align*} 
\|\widehat{e^{ig}1_{\B}}\|_q^q &\le    \|\widehat{1_{\B}}\|_q^q-c_{\overline{q},d}\inf_{\substack{L\text{ affine}\\\R-\text{valued}}}  \|e^{i(g-L)}-1\|_{L^p(\B)}^2
\end{align*}
for any function $g:\R^d\to\R$, then $p\le 2$.

\end{proposition}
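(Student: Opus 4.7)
My plan is to imitate the linearization strategy of Lemma \ref{optpow} to reduce the hypothesized stability inequality to a universal quadratic comparison between $\inf_{L}\|g - L\|_{L^p(\B)}^2$ and $\|g\|_{L^2(\B)}^2$, and then exhibit a family of concentrated bumps violating that comparison whenever $p > 2$.

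For the linearization step, I would apply the hypothesis to $tg$ with $g \in L^{\infty}(\B,\R)$ arbitrary and $t > 0$ small. Running the Taylor expansion of Lemma \ref{Taylorgen} exactly as in the proof of Lemma \ref{optpow}, using the identity $2|\cos\theta - 1| = |e^{i\theta} - 1|^2$ and H\"older's inequality to absorb $\|e^{itg} - 1\|_{L^{q'}}$ into $\|e^{itg} - 1\|_{L^2}$, followed by Sublemma \ref{techopt} (valid once $t\|g\|_{L^{\infty}(\B)} \le \epsilon(p,d)$), I will obtain
\[
\frac{c_{\overline{q},d}}{4}\,t^{2}\inf_{\substack{L\text{ affine}\\ \R-\text{valued}}}\|g - L\|_{L^p(\B)}^{2} \;\le\; C_{q,d}\,t^{2}\|g\|_{L^2(\B)}^{2} + O_{g}(t^{3}).
\]
After dividing by $t^{2}$ and letting $t \to 0^{+}$, this yields the universal estimate
\[
\inf_{\substack{L\text{ affine}\\ \R-\text{valued}}}\|g - L\|_{L^p(\B)}^{2} \;\le\; C'_{q,d}\,\|g\|_{L^2(\B)}^{2}
\]
holding for every bounded real-valued $g$ on $\B$, with $C'_{q,d}$ independent of $g$.

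For the counterexample, I would assume $p > 2$ and fix a smooth $\phi:\R^{d}\to[0,1]$ supported in $B(0,1/2)$ with $\phi \ge 3/4$ on $B(0,1/4)$, then set $g_{\epsilon}(x) := \phi(x/\epsilon)$ for $\epsilon \in (0,1)$. A change of variables gives $\|g_{\epsilon}\|_{L^2(\B)}^{2} = \epsilon^{d}\|\phi\|_{L^2}^{2}$ and $\|g_{\epsilon}\|_{L^p(\B)} = \|\phi\|_{L^p}\epsilon^{d/p}$. The crux is to show
\[
\inf_{\substack{L\text{ affine}\\ \R-\text{valued}}}\|g_{\epsilon} - L\|_{L^p(\B)} \;\ge\; c_{0}\,\epsilon^{d/p}\qquad(\epsilon\text{ small}),
\]
by a two-case split that exploits the equivalence $\|L\|_{L^{\infty}(\B)} \sim \|L\|_{L^{p}(\B)}$ on the finite-dimensional space of affine functions on $\B$: if $\|L\|_{L^p(\B)} \ge 2\|g_{\epsilon}\|_{L^p(\B)}$, the reverse triangle inequality immediately gives the bound; otherwise $\|L\|_{L^{\infty}(\B)}$ is $O(\epsilon^{d/p}) \le 1/4$ for small $\epsilon$, so $|g_{\epsilon} - L| \ge 1/2$ on $B(0,\epsilon/4)$, and the $L^{p}(\B)$ norm restricted to that small ball alone is already $\gtrsim \epsilon^{d/p}$.

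Combining the two scalings forces $c_{0}^{2}\epsilon^{2d/p} \le C'_{q,d}\|\phi\|_{L^{2}}^{2}\epsilon^{d}$, i.e.\ $\epsilon^{d(2/p - 1)} \le C''$ uniformly for small $\epsilon > 0$. Since $p > 2$ makes $d(2/p - 1)$ strictly negative, the left side blows up as $\epsilon \to 0^{+}$, producing a contradiction and forcing $p \le 2$. The main technical obstacle I anticipate is the dichotomy-based lower bound on $\inf_{L}\|g_{\epsilon} - L\|_{L^p(\B)}$, where the affineness of $L$ (hence its near-constancy on the $\epsilon$-scale support of $g_{\epsilon}$) is essential; once that is in hand, the rest is a direct scaling argument built on the linearization of Lemma \ref{optpow}.
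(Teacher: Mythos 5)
Your proposal is correct, and its first half (the linearization) is exactly the paper's: apply the hypothesis to $tg$ for bounded real-valued $g$, expand via Lemma \ref{Taylorgen}, pass through Sublemma \ref{techopt}, divide by $t^2$ and let $t\to 0$ to obtain the universal bound $\inf_L\|g-L\|_{L^p(\B)}^2\le C'_{q,d}\|g\|_{L^2(\B)}^2$ for all bounded real-valued $g$. Where you diverge is in how you contradict this bound when $p>2$. The paper takes the single function $g=|x|^{-d/p}$ (printed as $|x|^{d/p}$, evidently a typo), which lies in $L^2(\B)$ but not $L^p(\B)$ precisely when $p>2$, truncates it to $g_n=\min(g,n)$, and then runs a $\limsup$ dichotomy on the near-optimal affine approximants $L_n$ — either $\|L_n\|_{L^p}$ stays bounded, in which case $\|g_n-L_n\|_{L^p}$ inherits the divergence of $\|g_n\|_{L^p}$, or it blows up, in which case the comparison on the annulus $\{1/2<|x|\le 1\}$ (where $g_n$ is uniformly bounded) forces $\|g_n-L_n\|_{L^p}\to\infty$ anyway. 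You instead use the scaling family $g_\epsilon(x)=\phi(x/\epsilon)$ of uniformly bounded bumps, prove $\inf_L\|g_\epsilon-L\|_{L^p(\B)}\gtrsim\epsilon^{d/p}$ via your own dichotomy (large $\|L\|_{L^p}$ handled by the reverse triangle inequality; small $\|L\|_{L^p}$ forcing $\|L\|_{L^\infty(\B)}\le 1/4$ by finite-dimensional norm equivalence, so that $|g_\epsilon-L|\ge 1/2$ on $B(0,\epsilon/4)$), and conclude by power counting: $\epsilon^{2d/p}\lesssim\epsilon^d$ fails as $\epsilon\to 0$ when $p>2$. Both arguments are sound; yours has the mild advantage of never leaving the class of bounded functions for which the universal inequality was actually derived, so no truncation/limiting step is needed, at the cost of having to establish the scale-invariant lower bound on the affine-approximation error, which is the exact analogue of the paper's $\limsup$ analysis of the $L_n$.
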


\begin{proof} Let $g\in L^2(\B)\cap L^\infty(\B)$ be a real-valued function. By the proof of Lemma \ref{optpow}, for sufficiently small $t>0$,

\begin{align*}
\frac{c_{q,d}}{2}\inf_{\substack{L\text{ affine}\\\R-\text{valued}}}  \|tg-L\|_{L^p(\B)}^2 +O_g(t^3)\le C_{q,d}t^2\|g\|_{L^2(\B)}^2. 
\end{align*}
Divide by $t^2$ to get 
\begin{align*}
\frac{c_{q,d}}{2}\inf_{\substack{L\text{ affine}\\\R-\text{valued}}}  \|g-L\|_{L^p(\B)}^2 +O_g(t)\le C_{q,d}\|g\|_{L^2(\B)}^2
\end{align*} 
and let $t\to 0$. Thus 
\[ \frac{c_{q,d}}{2}\inf_{\substack{L\text{ affine}\\\R-\text{valued}}}  \|g-L\|_{L^p(\B)}^2 \le C_{q,d}\|g\|_{L^2(\B)}^2
\]
for real-valued $g\in L^2(\B)\cap L^\infty(\B) $.

Suppose that $p>2$. The real-valued function $g=|x|^{d/p}$ is in $L^2(\B)$ but $\|g\|_{L^p(\B)}=\infty$. Let $g_n:\B\to\R$ denote  the function $g$ if $g\le n$ and $n$ if $g>n$. Since each of the $g_n$ is real valued and in $L^2(\B)\cap L^\infty(\B)$, we have 
\[ \frac{c_{q,d}}{2}\inf_{\substack{L\text{ affine}\\\R-\text{valued}}}  \|g_n-L\|_{L^p(\B)}^2 \le C_{q,d}\|g_n\|_{L^2(\B)}^2.
 \]
Since $g_n$ increase monotonically to $g$, but the dominated convergence theorem, $\lim_{n\to\infty}\|g_n\|_{L^2(\B)}=\|g\|_{L^2(\B)}$. For each $n\in\N$, let $L_n:\R^d\to\R$ be an affine function satisfying 
\[ \|g_n-L_n\|_{L^p(\B)}\le \frac{1}{2}\inf_{\substack{L\text{ affine}\\\R-\text{valued}}}  \|g_n-L\|_{L^p(\B)}. \]

\noindent Then  for $\tilde{C}_{q,d}=C_{q,d}^{1/2}(2/c_{q,d})^{1/2}$

\[ \limsup_{n\to\infty}\|g_n-L_n\|_{L^p(\B)}\le \tilde{C}_{q,d}\|g\|_{L^2(\B)}. \]
If $\limsup_{n\to\infty}\|L_n\|_{L^p(\B)}<\infty$, then 
\[ \infty= \limsup_{n\to\infty}\|g_n\|_{L^p(\B)}-\limsup_{n\to\infty}\|L_n\|_{L^p(\B)}\le \limsup_{n\to\infty}\|g_n-L_n\|_{L^p(\B)}\]
is a contradiction. Now suppose $\limsup_{n\to\infty}\|L_n\|_{L^p(\B)}=\infty$. Then since the $L_n$ are affine, $\limsup_{n\to\infty}\|L_n\|_{L^p(\B\cap\{|x|>1/2\})}=\infty$. Take a subsequence $n_k$ so that $\lim_{k\to\infty}\|L_{n_k}\|_{L^p(\B\cap\{|x|>1/2\})}=\limsup_{n\to\infty}\|L_n\|_{L^p(\B\cap\{|x|>1/2\})}$. Then

\begin{align*}  
\infty&= \limsup_{k\to\infty}\|L_{n_k}\|_{L^p(\B\cap\{|x|>1/2\})}- 2^{d/p}|\B\cap\{|x|>1/2\}|\\
&\le \limsup_{k\to\infty}\|L_{n_k}\|_{L^p(\B\cap\{|x|>1/2\})}- \liminf_{k\to\infty}\|g_{n_k}\|_{L^p(\B\cap\{|x|>1/2\})}\\
&\le \limsup_{n\to\infty}\|g_n-L_n\|_{L^p(\B\cap\{|x|>1/2\})}\le \limsup_{n\to\infty}\|g_n-L_n\|_{L^p(\B)},
\end{align*} 
which is a contradiction. 

\end{proof}

\end{subsection}

\end{document}